\documentclass{paper}
\usepackage[english]{babel}
\usepackage{enumerate,amsmath,amsthm,amsfonts,pifont,slashed,amssymb,ifsym,mathrsfs,graphicx,graphics,yfonts,calligra,
latexsym,txfonts
}
\usepackage{authblk}
\usepackage{hyphenat}
\usepackage[utf8]{inputenc}


\usepackage{array}
\usepackage{amssymb}
\usepackage{enumerate}
\usepackage{fancyhdr}
\usepackage{layout}
\usepackage{multicol}
\usepackage{pstricks}
\usepackage{tikz-cd}

\usepackage[colorlinks]{hyperref}
 \hypersetup{linkcolor=blue}
\usepackage[all]{xy}
\usepackage[sort]{cite}

\newtheorem{theorem}{Theorem}[section]
\newtheorem{proposition}[theorem]{Proposition}
\newtheorem{lemma}[theorem]{Lemma}
\newtheorem{corollary}[theorem]{Corollary}

\theoremstyle{definition}
\newtheorem{definition}[theorem]{Definition}
\newtheorem{exm}[theorem]{Example}

\theoremstyle{remark}
\newtheorem{remark}[theorem]{Remark}


\newcommand{\lcc}{\operatorname{lcc}}

\newcommand{\op}{\operatorname{op}}
\newcommand{\id}{\operatorname{id}}

\newcommand{\Max}{\operatorname{Max}}
\newcommand{\Rad}{\operatorname{Rad}}

\newcommand{\clopI}{\operatorname{Clop}_{\uparrow}}
\newcommand{\clop}{\operatorname{Clop}}
\newcommand{\LL}{\text{\L}}


\newcommand{\MVs}{\mathcal{MV}^{\operatorname{ss}}}
\newcommand{\MVlcc}{\mathcal{MV}^{\operatorname{lcc}}}
\newcommand{\TMV}{{}^{\operatorname{MV}}\!\mathcal{T}\!\!\operatorname{op}}

\newcommand{\PMV}{{}^{\operatorname{MV}}\!\ \mathcal{T}\!\!\operatorname{op}_{\leq}}
\newcommand{\Pries}{{}^{\operatorname{MV}}\!\mathcal{P}\!\!\operatorname{ries}}
\newcommand{\MVL}{{}^{\operatorname{MV}}\!\mathcal{L}}

\newcommand{\SMV}{{}^{\operatorname{MV}}\!\!\mathcal{S}\!\operatorname{tone}}

\renewcommand{\hom}{\operatorname{Hom}}


\newcommand{\e}{\varepsilon}
\renewcommand{\O}{\Omega}

\newcommand{\lto}{\longrightarrow}

\newcommand{\lmapsto}{\longmapsto}
\newcommand{\cou}{{^\leftarrow}}
 

\newcommand{\0}{\mathbf{0}}

\renewcommand{\1}{\mathbf{1}}

\newcommand{\n}{\mathbf}
\newcommand{\MV}{\mathcal{MV}}

\newcommand{\N}{\mathbb{N}}

\title{An extension of Priestley duality to fuzzy topologies and positive MV-algebras}
\author[1]{Marby Zuley Bolaños Ortiz}
\affil[1]{Departamento de Matem\'aticas, Universidad del Valle -- Cali, Colombia \newline marby.bolanos@correounivalle.edu.co}
\author[2]{Ciro Russo}
\affil[2]{Departamento de Matem\'atica, Universidade Federal da Bahia -- Salvador, BA, Brazil \newline ciro.russo@ufba.br}

\date{}
\begin{document}
\maketitle
\begin{abstract}
We extend Priestley Duality to suitable categories of fuzzy topological spaces and positive MV-algebras, which stand to MV-algebras as bounded distributive lattices stand to Boolean algebras. The duality we prove extends not only classical Priestley Duality between Priestley Spaces and bounded distributive lattices, but also the duality between limit cut complete MV-algebras and Stone MV-topological spaces \cite{ciro2016} which, on its turn, is an extension of classical Stone Duality.
\end{abstract}

\section{Introduction}
The relationship between topology and lattice theory began to be explored through the works of M. H. Stone \cite{stone1934,stone1936,stonedistr}. 
In \cite{doctor64}, Doctor reviewed Stone's results from a categorical viewpoint, thus showing the existence of a dual equivalence between the categories of Boolean algebras, with Boolean homomorphisms as morphisms, and Stone spaces, i. e., Hausdorff, compact, zero-dimensional spaces, with continuous maps as arrows. Later on, Priestley \cite{pri70} proved a topological representation of bounded distributive lattices where the topological characterization of the dual spaces was simpler than the one in \cite{stonedistr} and explicitly incorporates an order relation. This categorical equivalence is known as Priestley duality.

In \cite{ciro2016}, the second author defined a class of fuzzy topological spaces, called \emph{MV-topological spaces}, in order to extend Stone Duality to fuzzy topology and MV-algebras. MV-algebras are arguably most natural generalization of Boolean algebras in the realm of fuzzy logics. More precisely, in \cite{ciro2016} an adjunction between the category of MV-algebras and the one of MV-topological spaces, which are fuzzy spaces in the sense of Chang \cite{changfuzzy}, was proved. Such an adjunction induces a duality between the full subcategories of limit cut complete (lcc) MV-algebras and Stone MV-topological spaces which, on its turn, coincide with classical Stone Duality when restricted to the categories of Boolean algebras and Stone spaces which are full subcategories of, respectively, lcc MV-algebras and Stone MV-spaces.

Eventually, MV-topologies have been further investigated \cite{tesis,dlprarc}, and were also used as base spaces for a representation of MV-algebras as sheaves of $\ell$-groups on fuzzy topological spaces \cite{dlprsoco}.

In the present paper, we aim at further exploring the connection between MV-algebra related structures and fuzzy topological spaces. In fact,
 we prove a Priestley-type duality for MV-topological spaces. Starting from the aforementioned Stone-type duality for MV-algebras, we introduce \emph{Priestley MV-spaces} and \emph{MV-lattices} --- a natural generalization of distributive lattices admitting further operations. It turns out that the MV-lattice duals of Priestley MV-spaces, are a subclass of \emph{positive MV-algebras} \cite{abbadini2022}, which are negation-free subreducts of MV-algebras. 

The paper is organised as follows. In Section \ref{prelsec}, we recall basic notions and results about MV-algebras and MV-topologies, along with the main results of \cite{ciro2016}, in order to make the paper as self-contained as possible. In Section \ref{MV-lattice}, we introduce and study MV-lattices, and recall some facts about positive MV-algebras. In particular, we prove some properties which will allow  us to associate a partially ordered space to any given MV-lattice.

The mains results are presented in Section \ref{dualsec}. In Theorem \ref{adjuncionP} we establish an adjunction between partially ordered MV-spaces and MV-lattices. Then we show how such a pair of functors restricts to a Priestley-type duality between a subcategory of Priestley MV-spaces and \emph{$H$-complete lcc positive MV-algebras} (Theorem \ref{dualth}).

Last, in Section\ref{compsec}, we discuss the connection between our result and both classical Priestley Duality and the Stone-type duality presented in \cite{ciro2016}.

\section{Preliminaries}\label{prelsec}
This section is devoted to introducing the basic concepts necessary for the subsequent development. For more detail on MV-algebras, we refer the reader to \cite{cignoli2013,MVdinola}, and for fuzzy topology and MV-topology to \cite{ftbook,changfuzzy,low} and \cite{ciro2016,dlprarc,dlprsoco,tesis} respectively.

\begin{definition}\label{mvalg}
An \emph{MV-algebra} is a structure $(A,\oplus,^*,0)$ where $\oplus$ is a binary operation, $^*$ is a unary operation and $0$ is a constant such that the following axioms are satisfied for any $a,b\in A$:
\begin{description}
  \item[(MV1)] $(A,\oplus,0)$ is an Abelian monoid,
  \item[(MV2)] $(a^*)^*=a,$
  \item[(MV3)] $0^*\oplus a=0^*,$
  \item[(MV4)] $(a^*\oplus b)^*\oplus b=(b^*\oplus a)^*\oplus a.$
\end{description}
\end{definition}
We denote an MV-algebra $(A,\oplus,^*,0)$ simply by its universe \emph{A}, and define the constant $1$ and the derived operations $\odot$  and $\ominus$ as follows:
\begin{itemize}
  \item $1:=0^*$
  \item $a\odot b:= (a^*\oplus b^*)^*$
  \item $a\ominus b:= a\odot b^*$
\end{itemize}

Let $A$ be an MV-algebra, $B$ a subalgebra of $A$, and $S \subseteq B$. If $B$ is the smallest subalgebra of $A$ which includes $S$, then $S$ is called a \emph{generating set} for $B$; in general, the smallest subalgebra of $A$ containing $S$ is called the \emph{subalgebra generated by} $S$ and is denoted by $\langle S \rangle$.
 
Now let $S^{*}$ be the set $\{s^{*}: s\in S \}$. The following characterization of generated subalgebras hold.
 \begin{proposition}\label{termo}
     Let $A$ be an MV-algebra and $S$ a nonempty subset of $A$. Set $S_{0}=S$ and, for all $ i \geq 1$, $S_{i}=S_{i-1} \cup S_{i-1}^* \cup \{a \oplus b : a,b \in S_{i-1}\}$. Then $\langle S \rangle=\bigcup\limits_{i \geq 0}S_{i}$. \end{proposition}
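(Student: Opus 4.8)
The plan is to prove the two inclusions separately. Write $T = \bigcup_{i \geq 0} S_i$. For the inclusion $T \subseteq \langle S \rangle$, I would argue by induction on $i$ that $S_i \subseteq \langle S \rangle$ for every $i \geq 0$. The base case $S_0 = S \subseteq \langle S \rangle$ is immediate from the definition of the generated subalgebra. For the inductive step, assume $S_{i-1} \subseteq \langle S \rangle$; since $\langle S \rangle$ is closed under $^*$ and under $\oplus$, each of the three pieces $S_{i-1}$, $S_{i-1}^*$, and $\{a \oplus b : a,b \in S_{i-1}\}$ lies in $\langle S \rangle$, hence so does their union $S_i$. Taking the union over all $i$ gives $T \subseteq \langle S \rangle$.

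For the reverse inclusion $\langle S \rangle \subseteq T$, since $\langle S \rangle$ is by definition the smallest subalgebra containing $S$, it suffices to show that $T$ is a subalgebra of $A$ that contains $S$. Containment of $S = S_0 \subseteq T$ is clear. To see that $T$ is closed under the operations: first note that the chain is increasing, $S_0 \subseteq S_1 \subseteq S_2 \subseteq \cdots$, directly from the defining recursion (each $S_i$ is a union that includes $S_{i-1}$). Now if $a \in T$, then $a \in S_i$ for some $i$, so $a^* \in S_{i+1} \subseteq T$. If $a, b \in T$, pick $i$ with $a, b \in S_i$ (using that the chain is increasing, so both land in a common stage), and then $a \oplus b \in S_{i+1} \subseteq T$. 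Finally $\0 \in T$: this holds because $S$ is nonempty, say $s \in S$, so $s^* \in S_1$ and $s \oplus s^* \in S_2$; here $s \oplus s^* = s \oplus s^* $, and one checks $s \oplus s^* = \1$ in any MV-algebra, whence $\1 \in T$ and $\0 = \1^* \in T$ as well. (Alternatively, and more cleanly, observe that $\0^* = \1$ and $\0 \oplus \0 = \0$ force nothing directly, so the nonemptiness route through $s \oplus s^* = \1$ followed by complementation is the argument to use.) Thus $T$ is a subalgebra containing $S$, so $\langle S \rangle \subseteq T$.

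Combining the two inclusions yields $\langle S \rangle = T = \bigcup_{i \geq 0} S_i$, as claimed. The only genuinely delicate point is verifying that the constants $\0$ and $\1$ belong to $T$: the recursion only manufactures elements via $^*$ and $\oplus$, so one must use the hypothesis that $S$ is nonempty together with the MV-identity $s \oplus s^* = \1$ (which follows from MV3, since $s \oplus s^* = s^* \oplus (s^*)^* {}^* \cdots$, i.e. from $\0^* \oplus a = \0^*$ applied suitably) to generate $\1$, and then $\0 = \1^*$. Everything else is a routine induction on the stage index, so I do not expect any obstacle there.
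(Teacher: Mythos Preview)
The paper states this proposition without proof, so there is nothing to compare against; your argument is the standard one and is correct.

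One small cleanup: your justification of the identity $s \oplus s^* = \1$ in the final paragraph is garbled. A clean derivation is to apply MV4 with $b = \1 = \0^*$: the left-hand side becomes $(a^* \oplus \1)^* \oplus \1 = \1^* \oplus \1 = \1$ (using MV3 for $a^* \oplus \1 = \1$), while the right-hand side becomes $(\1^* \oplus a)^* \oplus a = (\0 \oplus a)^* \oplus a = a^* \oplus a$. Hence $a \oplus a^* = \1$ for every $a$, and then $\0 = \1^* \in T$ as you say. With that fixed, the proof is complete and entirely routine.
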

 Consequently, if $A=\langle S \rangle$, every $x \in A$ can be expressed as $x=t(a_{1},a_{2}, \cdots, a_{n} )$ where $t$ is a term in the language $\{\oplus, ^{*}\}$ and $a_{i} \in S$.

An \emph{ideal} of an MV-algebra $A$ is a subset $I$, is downward closed submonoid of $(A, \oplus, 0)$. It is well-known that ideals and congruences of every MV-algebra are in bijective correspondence. In fact, for any congruence, the class of $0$ is an ideal and, for every ideal $I$, the relation defined by $a \sim_I b$ if and only if $d(a,b) := (a \ominus b) \oplus (b \ominus a) \in I$ is a congruence and is the unique one whose $0$-class is $I$.

The set of ideals of an MV-algebra $A$ is ordered by inclusion, and an ideal is \emph{maximal} if it is a maximal element of the set of proper ideals of $A$. The set of maximal ideals of $A$ is denoted by $\Max A$, and the intersection of all maximal ideals (which is obviously an ideal too) is called the \emph{radical} of $A$ and denoted by $\Rad A$.

Now, we recall the definition of simple and semisimple MV-algebra.
\begin{definition}\label{ss}
An MV-algebra $A$ is called:
\begin{enumerate}
\item[(i)] \emph{simple} if its only proper ideal is $\{0\}$, and
\item[(ii)]  \emph{semisimple} if $A$ is nontrivial and $\Rad A = \{0\}$. 
\end{enumerate}
\end{definition}
The class of semisimple MV-algebras, with MV-algebra homomorphisms, forms a full subcategory of the one -- $\MV$ -- of MV-algebras. We denote such a subcategory by $\MVs$. 


Due to its relevance to our results, we recall the following representation theorem for semisimple MV-algebras, along with a sketch of its proof.
\begin{theorem}\cite{belluce}\label{bell repr}
Every semisimple MV-algebra $A$ can be embedded in the MV-algebra of fuzzy subsets  $[0,1]^{\Max A}$ of $\Max A$.
\end{theorem}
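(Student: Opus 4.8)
The plan is to realize every semisimple MV-algebra $A$ as an algebra of $[0,1]$-valued functions on its maximal spectrum $\Max A$. The key external input is the structure theory of simple MV-algebras: by H\"older's theorem for MV-algebras, a simple MV-algebra is isomorphic to a subalgebra of the standard MV-algebra $[0,1]$. Thus, for each maximal ideal $\mathfrak{m} \in \Max A$, the quotient $A/\mathfrak{m}$ is simple (this is where $\mathfrak{m}$ being maximal is used), hence embeds via some monomorphism $\iota_{\mathfrak{m}} \colon A/\mathfrak{m} \hookrightarrow [0,1]$.

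First I would fix such an embedding $\iota_{\mathfrak{m}}$ for each $\mathfrak{m} \in \Max A$ and compose it with the canonical quotient map $\pi_{\mathfrak{m}} \colon A \to A/\mathfrak{m}$ to obtain an MV-algebra homomorphism $v_{\mathfrak{m}} := \iota_{\mathfrak{m}} \circ \pi_{\mathfrak{m}} \colon A \to [0,1]$. Then I would define
\begin{equation*}
\Phi \colon A \lto [0,1]^{\Max A}, \qquad \Phi(a)(\mathfrak{m}) := v_{\mathfrak{m}}(a).
\end{equation*}
Because each $v_{\mathfrak{m}}$ preserves $\oplus$, $^*$ and $\0$, and the operations on the power MV-algebra $[0,1]^{\Max A}$ are computed pointwise, $\Phi$ is automatically an MV-algebra homomorphism. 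It therefore remains only to check injectivity.

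For injectivity, suppose $\Phi(a) = \Phi(b)$, i.e.\ $v_{\mathfrak{m}}(a) = v_{\mathfrak{m}}(b)$ for every $\mathfrak{m} \in \Max A$. Using the distance term $d(a,b) = (a \ominus b)\oplus(b \ominus a)$, this says $v_{\mathfrak{m}}(d(a,b)) = 0$ for all $\mathfrak{m}$; since each $\iota_{\mathfrak{m}}$ is injective, this forces $\pi_{\mathfrak{m}}(d(a,b)) = 0$, i.e.\ $d(a,b) \in \mathfrak{m}$, for every maximal ideal $\mathfrak{m}$. Hence $d(a,b) \in \bigcap \Max A = \Rad A = \{0\}$ by semisimplicity, and the standard MV-algebra identity ``$d(a,b)=0 \iff a = b$'' gives $a = b$. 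Thus $\Phi$ is an embedding.

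The only genuine obstacle is the simple-to-$[0,1]$ embedding, i.e.\ H\"older's theorem for MV-algebras (equivalently, that $A/\mathfrak{m}$ is simple for $\mathfrak{m}$ maximal and that every simple MV-algebra embeds in $[0,1]$); everything else is routine once the pointwise-operations observation is in place. I would cite this from \cite{cignoli2013} rather than reprove it. A minor point worth a sentence is that we only need \emph{some} embedding for each $\mathfrak{m}$, made simultaneously via a choice function, so no coherence between the $\iota_{\mathfrak{m}}$'s is required.
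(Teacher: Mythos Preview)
Your argument is correct and follows essentially the same route as the paper: both pick, for each $\mathfrak{m}\in\Max A$, an embedding $\iota_{\mathfrak{m}}\colon A/\mathfrak{m}\hookrightarrow[0,1]$ (using that quotients by maximal ideals are simple and hence embed in $[0,1]$), bundle these into a map $A\to[0,1]^{\Max A}$, and deduce injectivity from $\Rad A=\{0\}$. The only cosmetic difference is that the paper computes the ideal kernel $\ker\iota=\bigcap_{M}M=\Rad A$ directly, whereas you phrase injectivity via $d(a,b)\in\bigcap_{\mathfrak{m}}\mathfrak{m}$; these are the same computation under the ideal--congruence correspondence.
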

\begin{proof} (Sketch)
For each maximal ideal $M$ of $A$, the quotient algebra $A/M$ is simple and, therefore, isomorphic to a subalgebra of $[0,1]$. So, there exists an embedding $\iota_{M}:A/M \to [0,1]$. Let $\iota: A \to [0,1]^{\Max A}$ be the map defined by $\iota(a) = \hat{a}$ for each $a \in A$, where
\begin{align*}
\hat{a}: \Max A& \to [0,1]\\
         M& \mapsto \iota_{M}(a/M).
\end{align*}
The function $\iota$ is easily seen to be a homomorphism. Moreover
\begin{align*}
    \ker\iota=& \bigcap _{M\in \Max A}\{a \in A : \hat{a}(M)=0\} \\
    =&\bigcap _{M\in \Max A}\{a \in A: \iota_{M}(a)=(a/M)=0\}\\
    =&\bigcap _{M\in \Max A}M = \Rad A.
\end{align*}
Since $A$ is a semisimple MV-algebra, $\ker\iota = \Rad A = \{0\}$, whence $\iota$ is an embedding.
\end{proof}

\begin{remark}\label{B[0,1]}
It is well-known also that, if $A$ is a semisimple MV-algebra, then the map
    \begin{align*}
    T: \operatorname{Hom}_{\MV}(A, [0,1]) & \rightarrow \Max A\\
    f & \mapsto f^{-1}[0],
\end{align*} 
is a bijection. As a consequence, for $f \in \operatorname{Hom}_{\MV}(A,[0,1])$ and for each $a \in A$, $f(a)= \iota_{f^{-1}[0]}\left(a / f^{-1}[0]\right)$, and $\iota_{f^{-1}[0]}: A/f^{-1}[0]  \rightarrow[0,1]$ is the canonical embedding mentioned in Theorem \ref{bell repr}.
\end{remark}

Let $A$ be a semisimple MV-algebra, $X$ a subset of $A$ and $\leq$ the canonical partial order of $A$
. As usual, we  denote by $lX$ the set of lower bounds of $X$ in $A$ and by $uX$ the  set of all upper bounds of $X$ in $A$. In this part, for $a \in A$ and $X \subseteq A$ we adopt the notation $\hat{a}:=\iota(a) \in [0,1]^{\Max A}$ and $\widehat{X}:=\iota(X) \subseteq [0,1]^{\Max A}$, according to the proof of Theorem \ref{bell repr}.
\begin{definition}\label{ducuco}
Let $A$ be a semisimple MV-algebra. A subset $X$ of $A$ is called a \emph{cut} if $X = luX$. A cut $X$ of $A$ is a \emph{limit cut} iff
\begin{equation}\label{distance}
 d(\widehat X,\widehat{uX}) = \bigwedge \{d(\widehat a, \widehat b) \mid b \in uX, a \in X\} = \bigwedge \{\widehat b \ominus \widehat a \mid b \in uX, a \in X\} = 0.
\end{equation}

We shall say that $A$ is \emph{limit cut complete} (\emph{lcc} for short) if, for any limit cut $X$ of $A$, there exists in $A$ the supremum of $X$ or, equivalently, the supremum of $\widehat X$ in $[0,1]^{\Max A}$ belongs to $\widehat A$.
\end{definition}

\begin{proposition}\cite[Proposition 4.5]{ciro2016}\label{lc}
Let $A$ be a semisimple MV-algebra. Then a cut $X$ of $A$ is a limit cut if and only if there exists a cut $Y$ of $A$ such that, in $[0,1]^{\Max A}$, $\bigvee \widehat X = \bigwedge \widehat Y^*$, where $Y^* = \{y^* \mid y \in Y\}$. Moreover, $Y$ is a limit cut too.
\end{proposition}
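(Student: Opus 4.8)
The plan is to carry out every computation inside the complete MV-algebra $[0,1]^{\Max A}$, into which, by Theorem \ref{bell repr}, $\iota$ embeds $A$ as an MV-subalgebra (hence also as an ordered subset), and in which all joins and meets are taken pointwise. The first step is to reformulate the defining condition \eqref{distance}: a cut $X$ is a limit cut if and only if $\bigvee\widehat X=\bigwedge\widehat{uX}$ in $[0,1]^{\Max A}$. Indeed, since $a\le b$ in $A$ whenever $a\in X$ and $b\in uX$, and $\iota$ is order-preserving, we have $\widehat a\le\widehat b$, so $\bigvee\widehat X\le\bigwedge\widehat{uX}$; moreover no truncation occurs in $\widehat b\ominus\widehat a$ for such pairs, and the meet over all pairs $(a,b)$ separates, at each $M\in\Max A$, into the pointwise join over $\widehat X$ and the pointwise meet over $\widehat{uX}$, giving $d(\widehat X,\widehat{uX})=\bigl(\bigwedge\widehat{uX}\bigr)\ominus\bigl(\bigvee\widehat X\bigr)$ as an element of $[0,1]^{\Max A}$. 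Hence \eqref{distance} is equivalent to $\bigwedge\widehat{uX}\le\bigvee\widehat X$, that is, to $\bigvee\widehat X=\bigwedge\widehat{uX}$.

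For the forward implication, assume $X$ is a limit cut and set $Y:=(uX)^{*}=\{b^{*}\mid b\in uX\}$. Since $^{*}$ is an order-reversing involution of $A$, the identities $u(S^{*})=(lS)^{*}$ and $l(S^{*})=(uS)^{*}$ hold for every $S\subseteq A$; applying them together with $luX=X$ yields $luY=l\bigl(u((uX)^{*})\bigr)=l\bigl((luX)^{*}\bigr)=l(X^{*})=(uX)^{*}=Y$, so $Y$ is a cut. Since moreover $Y^{*}=((uX)^{*})^{*}=uX$, we get $\widehat{Y^{*}}=\widehat{uX}$, whence by the first step $\bigwedge\widehat{Y^{*}}=\bigwedge\widehat{uX}=\bigvee\widehat X$, as required.

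For the converse, let $Y$ be a cut with $\bigvee\widehat X=\bigwedge\widehat{Y^{*}}$. For every $a\in X$ and $y\in Y$ we have $\widehat a\le\bigvee\widehat X=\bigwedge\widehat{Y^{*}}\le\widehat{y^{*}}$, hence $a\le y^{*}$ in $A$; thus $Y^{*}\subseteq uX$, so $\widehat{Y^{*}}\subseteq\widehat{uX}$ and $\bigwedge\widehat{uX}\le\bigwedge\widehat{Y^{*}}=\bigvee\widehat X\le\bigwedge\widehat{uX}$. Therefore $\bigvee\widehat X=\bigwedge\widehat{uX}$, and $X$ is a limit cut by the first step. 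Finally, in $[0,1]^{\Max A}$ one has $\bigwedge\widehat{Y^{*}}=\bigl(\bigvee\widehat Y\bigr)^{*}$ and $\bigwedge\widehat{X^{*}}=\bigl(\bigvee\widehat X\bigr)^{*}$, so applying $^{*}$ turns $\bigvee\widehat X=\bigwedge\widehat{Y^{*}}$ into $\bigvee\widehat Y=\bigwedge\widehat{X^{*}}$; the relation between $X$ and $Y$ is thus symmetric, and the converse implication, applied with $X$ and $Y$ interchanged, shows that $Y$ is a limit cut as well.

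I expect the only delicate point to be the pointwise computation in the first step — specifically, checking that $\widehat b\ominus\widehat a$ involves no truncation for $a\in X$, $b\in uX$, and that the meet over all such pairs factors into the pointwise join over $\widehat X$ and the pointwise meet over $\widehat{uX}$ — together with the verification that $Y=(uX)^{*}$ is not merely a down-set but an actual cut, for which the identities $u(S^{*})=(lS)^{*}$, $l(S^{*})=(uS)^{*}$ (reflecting that $^{*}$ is an order-reversing involution) and the hypothesis $luX=X$ are the crucial ingredients.
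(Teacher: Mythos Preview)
Your proof is correct. The paper does not supply its own proof of this proposition --- it is quoted from \cite{ciro2016} without argument --- so there is nothing to compare against; your reformulation of the limit-cut condition as $\bigvee\widehat X=\bigwedge\widehat{uX}$, the construction $Y=(uX)^{*}$ together with the cut verification via $u(S^{*})=(lS)^{*}$ and $l(S^{*})=(uS)^{*}$, and the symmetry argument for the ``moreover'' clause all go through as written.
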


\begin{corollary}\cite[Corollary 4.6]{ciro2016}\label{lccch}
A semisimple MV-algebra $A$ is lcc if and only if, for all $X, Y \subseteq A$ and $\alpha \in [0,1]^{\Max A}$, $\alpha = \bigvee \widehat X = \bigwedge \widehat Y$ implies $\alpha \in \widehat A$.
\end{corollary}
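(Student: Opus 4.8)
The plan is to deduce both implications from Proposition~\ref{lc}, the substance lying in the direction that assumes $A$ lcc, where an arbitrary pair of subsets realising $\alpha$ as a join and as a meet has to be converted into a genuine limit cut. Throughout I identify $A$ with $\widehat A\subseteq[0,1]^{\Max A}$ via the embedding $\iota$ of Theorem~\ref{bell repr} and use freely that $[0,1]^{\Max A}$ is a complete MV-algebra, so that joins and meets of arbitrary subsets of $\widehat A$ exist in it and are interchanged by the involution: $\bigl(\bigvee S\bigr)^*=\bigwedge S^*$ and $\bigl(\bigwedge S\bigr)^*=\bigvee S^*$.

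\emph{Assume $A$ is lcc} and let $\alpha=\bigvee\widehat X=\bigwedge\widehat Y$ with $X,Y\subseteq A$. Put $X_1:=\{a\in A:\widehat a\le\alpha\}$ and $Z:=\{c\in A:\widehat c\le\alpha^*\}$. The first step is to record the two identities
\[
\alpha=\bigwedge\{\widehat b : b\in A,\ \widehat b\ge\alpha\},\qquad \alpha^*=\bigwedge\{\widehat c : c\in A,\ \widehat c\ge\alpha^*\},
\]
the first because $\widehat Y$ is contained in the displayed family while $\alpha$ is a lower bound of it, and the second by applying the first to $\alpha^*$, which is again of the right form since $\alpha^*=\bigvee\widehat{Y^*}=\bigwedge\widehat{X^*}$. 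Since the order of $A$ is the pointwise order inherited from $[0,1]^{\Max A}$ and $\bigvee\widehat{X_1}=\alpha$ (because $X\subseteq X_1$ and every member of $X_1$ is $\le\alpha$), one gets $uX_1=\{b\in A:\widehat b\ge\alpha\}$, whence $luX_1=\{a:\widehat a\le\bigwedge\widehat{uX_1}\}=X_1$ by the first identity; thus $X_1$ is a cut. The same computation with $\alpha^*$ in place of $\alpha$ shows $Z$ is a cut and $\bigvee\widehat Z=\alpha^*$. Finally, as $*$ is an order-reversing bijection, $Z^*=\{b\in A:\widehat b\ge\alpha\}=uX_1$, so $\bigwedge\widehat{Z^*}=\bigwedge\widehat{uX_1}=\alpha=\bigvee\widehat{X_1}$. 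By Proposition~\ref{lc}, applied to the cut $X_1$ with witness cut $Z$, the cut $X_1$ is a limit cut, so lcc-ness of $A$ gives $\alpha=\bigvee\widehat{X_1}\in\widehat A$, as desired.

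The converse is immediate from Proposition~\ref{lc}: given a limit cut $X$ of $A$, set $\alpha=\bigvee\widehat X$; the proposition furnishes a cut $Y$ with $\alpha=\bigvee\widehat X=\bigwedge\widehat{Y^*}$, so $\alpha$ is simultaneously the join of $\widehat X$ and the meet of $\widehat{Y^*}$ for the subsets $X,Y^*\subseteq A$, and the hypothesis forces $\alpha\in\widehat A$. By Definition~\ref{ducuco} this says precisely that every limit cut of $A$ has a supremum, i.e. $A$ is lcc.

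I expect the only nonroutine point to be the verification that $X_1$ (and $Z$) is a cut in the sense $X_1=luX_1$; this is exactly where the hypothesis ``$\alpha=\bigwedge\widehat Y$'' and its $*$-dual get used, since without them the meet of the elements of $\widehat A$ above $\alpha$ could overshoot $\alpha$ and the equality $luX_1=X_1$ would break down. Everything else reduces to bookkeeping with suprema and infima in the complete MV-algebra $[0,1]^{\Max A}$ and with the involution $*$.
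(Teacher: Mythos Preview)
Your argument is correct. The paper does not supply its own proof of this corollary---it is simply quoted from \cite{ciro2016}---so there is nothing in the present text to compare your proof against; your deduction from Proposition~\ref{lc} is exactly the route the label ``corollary'' suggests, and each step (in particular the verification that $X_1=luX_1$ via the identity $\bigwedge\widehat{uX_1}=\alpha$, and the identification $Z^*=uX_1$) is sound.
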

 

 MV-topological spaces have been introduced in \cite{ciro2016} with the aim of extending Stone duality to semisimple MV-algebras. An  MV-topological space is basically a special fuzzy topological space in the sense of C. L. Chang \cite{changfuzzy}. Moreover, most of the concepts and results of the present section are adaptations of the corresponding ones in fuzzy topology. 
\begin{definition}\cite{ciro2016}\label{mvtop}
	Let $X$ be a set, $A$ the MV-algebra $[0,1]^X$ and $\tau \subseteq A$. We say that $( X, \tau )$ is an 
\emph{MV-topological space} (or \emph{MV-space}) if 
	\begin{enumerate}[(i)]
		\item $\0, \1 \in \tau$,
		\item for any family $\{o_i\}_{i \in I}$ of elements of $\tau$, $\bigvee_{i \in I} o_i \in \tau$,
	\end{enumerate}
	and, for all $o_1, o_2 \in \tau$,
	\begin{enumerate}[(i)]
		\setcounter{enumi}{2}
		\item $o_1 \odot o_2 \in \tau$,
		\item $o_1 \oplus o_2 \in \tau$,
		\item $o_1 \wedge o_2 \in \tau$.
	\end{enumerate}
	The set $\tau$ is called an \emph{MV-topology} on $X$ and the elements of $\tau$ are the \emph{open sets} of $X$. The elements of set $\tau^{*} = \{\alpha^* \mid \alpha \in \tau\}$ are called the \emph{closed sets} of $X$. \end{definition}
   
   \begin{definition}\cite{changfuzzy}\label{pre}
Let $X$ and $Y$ be sets and $f: X \lto Y$. The \emph{preimage}, via $f$, of fuzzy subsets of $Y$ is the map defined as
\begin{align*}\label{preim}
f\cou:  [0,1]^Y & \lto  [0,1]^X \\
 \alpha  & \lmapsto  \alpha \circ f.
\end{align*}
and the \emph{image}, via $f$, of fuzzy subsets of X is a map $f^\to :	[0,1]^{X} \lto [0,1]^{Y} $ such that $$f^\to(\alpha)(y)= \bigvee_{f(x) = y} \alpha(x)$$
for  all  $\alpha \in [0,1]^X$ and for all $y \in Y$.\end{definition}
\begin{definition}\cite{changfuzzy}\label{cont}
	Let $(X, \tau_X)$ and $(Y, \tau_Y)$ be two MV-topological spaces. A map $f: X \lto Y$ is said to be
		\begin{itemize}
		\item \emph{continuous} if $f\cou[\tau_Y] \subseteq \tau_X$,
		\item \emph{open} if $f^\to(\alpha) \in \tau_Y$ for all $\alpha \in \tau_X$,
		\item \emph{closed} if $f^\to(\beta) \in \tau^*_Y$ for all $\beta \in \tau^*_X$, and
		\item a \emph{homeomorphism} if it is bijective and both $f$ and $f^{-1}$ are continuous.
	\end{itemize}
\end{definition}

We can now define the category $\TMV$ of MV-topological spaces, as the one whose objects are MV-topological spaces, and having continuous functions among them as morphisms.

\begin{definition}\label{base}
Given a MV-topological space $(X, \tau)$, a subset $B$ of $\tau$ is called a \emph{base} for $\tau$ if every open set of $\tau$ is a join of elements of $B$
\end{definition}
The definition of base for $\tau$ is the same as the one given for fuzzy topological spaces. However, the one of subbases for MV-topologies is different.
\begin{definition}\cite{dlprarc}\label{subbase MV-top}
Given an MV-topological space $(X, \tau)$, a subset $S$ of $\tau$ is called a \emph{subbase} for $(X, \tau)$ if each open set of $X$ can be obtained as a join of finite combinations of products, infima, and sums of elements of $S$. More precisely, $S$ is a subbase for $\tau$ if, for all $\alpha \in \tau$, there exists a family $\{t_i\}_{i \in I}$ of terms (or polynomials) in the language $\{\oplus, \odot, \wedge\}$, such that
\begin{equation}\label{subcomb}
\alpha = \bigvee_{i \in I} t_i(\beta_{i1}, \ldots, \beta_{in_i})
\end{equation}
where, for all $i \in I$, $n_i < \omega$, and $\{\beta_{ij}\}_{j=1}^{n_i} \subseteq S$. A subbase $S$ of an MV-topology $\tau$ shall be called \emph{large} if, for all $\alpha \in S$, $n\alpha \in S$ for all $n < \omega$
\end{definition}
\begin{remark}\label{base-subbase}
If $S$ is a subbase for an MV-topology, the set $B_S$ defined by the following conditions is obviously a base for the same MV-topology:
\begin{enumerate}
  \item [(B1)] $S \subseteq B_S$;
  \item [(B2)] if $\alpha, \beta \in B_S$ then $\alpha \star \beta \in B_S$ for $\star \in \{\oplus, \odot, \wedge\}$.
\end{enumerate}
\end{remark}
\begin{exm}
Let us consider the MV-topological space $(\{x\}, [0,1]^{\{x\}})$ on a singleton $\{x\}$. For any $n > 1$, all the families $[0,1/n]^{\{x\}}$, $([0,1/n] \cap \mathbb{Q})^{\{x\}}$, and $([0,1/n] \setminus \mathbb{Q})^{\{x\}}$ are easily seen to be (non-large) subbases for the given MV-topology. In fact, $([0,1] \setminus \mathbb{Q})^{\{x\}}$ is a base.
\end{exm}

\begin{lemma}\cite{dlprarc}\label{subbascont}
Let $(X, \tau)$ and $(Y, \tau')$ be two MV-topological spaces and let $S$ be a subbase for $\tau'$. A map  $f: X \lto Y$ is continuous if and only if $f\cou[S] \subseteq \tau$.
\end{lemma}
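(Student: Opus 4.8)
The plan is to prove the two implications separately. The ``only if'' direction is immediate: if $f$ is continuous then by Definition \ref{cont} we have $f\cou[\tau'] \subseteq \tau$, and since $S \subseteq \tau'$ this already gives $f\cou[S] \subseteq \tau$. All the work is in the converse, and it rests on the single observation that the preimage operator $f\cou$ is compatible with every piece of algebraic structure in sight.

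So the first step I would carry out, assuming $f\cou[S] \subseteq \tau$, is to record that $f\cou \colon [0,1]^Y \to [0,1]^X$, being defined pointwise by $f\cou(\alpha) = \alpha \circ f$, commutes with every operation that is computed pointwise on these function MV-algebras. Concretely, for $\star \in \{\oplus, \odot, \wedge\}$ and $\alpha,\beta \in [0,1]^Y$ one checks $f\cou(\alpha \star \beta) = f\cou(\alpha) \star f\cou(\beta)$, and for any family $\{\alpha_i\}_{i \in I}$ one checks $f\cou(\bigvee_{i\in I}\alpha_i) = \bigvee_{i\in I} f\cou(\alpha_i)$, simply by evaluating both sides at an arbitrary $x \in X$ and using that the operations on $[0,1]^X$ and $[0,1]^Y$ are inherited coordinatewise from $[0,1]$. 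A trivial induction on term complexity then yields $f\cou\bigl(t(\alpha_1,\dots,\alpha_n)\bigr) = t\bigl(f\cou(\alpha_1),\dots,f\cou(\alpha_n)\bigr)$ for every term $t$ in the language $\{\oplus,\odot,\wedge\}$.

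The second step is to combine this with the subbase hypothesis and the closure properties of $\tau$. Let $\alpha \in \tau'$ be arbitrary. Since $S$ is a subbase for $\tau'$, Definition \ref{subbase MV-top} supplies a family $\{t_i\}_{i\in I}$ of terms in $\{\oplus,\odot,\wedge\}$ and elements $\beta_{ij} \in S$ (for $1 \le j \le n_i < \omega$) with $\alpha = \bigvee_{i\in I} t_i(\beta_{i1},\dots,\beta_{in_i})$. Applying $f\cou$ and using the first step,
\[
f\cou(\alpha) \;=\; \bigvee_{i\in I} t_i\bigl(f\cou(\beta_{i1}),\dots,f\cou(\beta_{in_i})\bigr).
\]
By hypothesis each $f\cou(\beta_{ij}) \in \tau$; since $\tau$ is an MV-topology it is closed under $\oplus$, $\odot$ and $\wedge$ (so each $t_i(\dots) \in \tau$) and under arbitrary joins (so the full right-hand side lies in $\tau$). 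Hence $f\cou(\alpha) \in \tau$, and as $\alpha$ ranged over $\tau'$ this gives $f\cou[\tau'] \subseteq \tau$, i.e. $f$ is continuous.

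I do not expect a genuine obstacle here: the argument is entirely mechanical. The only point requiring a little care is matching the data of Definition \ref{subbase MV-top} — finite terms but an arbitrary index set $I$ — against the closure clauses of Definition \ref{mvtop}, to be sure that precisely the operations appearing in the $t_i$ (finite $\oplus,\odot,\wedge$) together with arbitrary joins are all available in $\tau$; this is exactly what Definition \ref{mvtop} guarantees, so the proof goes through.
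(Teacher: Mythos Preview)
Your argument is correct and is exactly the natural one: $f\cou$ is an MV-homomorphism preserving arbitrary joins, so it carries the subbase representation of any $\alpha\in\tau'$ to an element of $\tau$. Note that the paper does not actually supply a proof of this lemma --- it is quoted from \cite{dlprarc} without argument --- so there is nothing further to compare against.
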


Let $(X, \tau)$ an MV-topological space, a \emph{covering} of $X$ is  defined as any subset $\Gamma$ of $[0,1]^X$ such that $\bigvee \Gamma = \1$. While an \emph{additive covering} ($\oplus$-covering, for short) is a finite subset $\{\alpha_1, \ldots, \alpha_k\}$ of $[0,1]^X$, along with natural numbers $n_1, \ldots, n_k$, such that $n_1\alpha_1 \oplus \cdots \oplus n_k \alpha_k = \1$.
 In the other words, an additive covering  is a finite family $\{\alpha_i\}_{i=1}^n$ of elements of $[0,1]^X$, $n < \omega$, such that $\alpha_1 \oplus \cdots \oplus \alpha_n = \1$.

\begin{definition}\cite{ciro2016}\label{compact}
An MV-topological space $(X, \tau)$ is said to be \emph{compact} if any open covering of $X$ contains an additive covering; it is called \emph{strongly compact}  if any open covering contains a finite covering.\footnote{What we call strong compactness here is called simply compactness in the theory of lattice-valued fuzzy topologies \cite{changfuzzy}.}
\end{definition}
Since $a \vee b \leq a \oplus b$ for any  $a, b \in [0,1]^{X}$ it is immediate to see that strong compactness implies compactness.

\begin{lemma}[Alexander Subbase Lemma for MV-Topologies]\cite{dlprarc}\label{alex}
Let $(X, \tau)$ be an MV-topological space and $S$ a large subbase for $\tau$. If every collection of sets from $S$ that cover $X$ has an additive subcover, then $X$ is compact.
\end{lemma}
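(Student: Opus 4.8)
The plan is to mimic the classical Alexander Subbase Lemma argument, using a Zorn's Lemma maximality argument on the family of "bad" open coverings, and then to exploit the fact that $S$ is a \emph{large} subbase to descend from an arbitrary open covering to one drawn from $S$. So suppose, for contradiction, that $X$ is not compact; then there is an open covering $\Gamma_0 \subseteq \tau$ admitting no additive subcover. Consider the poset $\mathcal{P}$ of all open coverings of $X$ (subsets $\Gamma \subseteq \tau$ with $\bigvee \Gamma = \1$) that contain $\Gamma_0$ and admit no additive subcover, ordered by inclusion. A routine check shows that the union of a chain in $\mathcal{P}$ is again in $\mathcal{P}$: it is still a covering, it still contains $\Gamma_0$, and if some finite subfamily $\{\alpha_1,\dots,\alpha_k\}$ of the union satisfied $n_1\alpha_1 \oplus \cdots \oplus n_k\alpha_k = \1$, then all the $\alpha_i$ would already lie in a single member of the chain, contradicting that the member is in $\mathcal{P}$. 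By Zorn's Lemma pick a maximal element $\Gamma \in \mathcal{P}$.

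The key structural observation about this maximal $\Gamma$ is a \emph{primeness} property with respect to the subbase operations $\{\oplus, \odot, \wedge\}$: if $t(\beta_1,\dots,\beta_n) \in \Gamma$ where $t$ is a term in $\{\oplus,\odot,\wedge\}$ and each $\beta_j \in S$, then some $\beta_j$ (or, more carefully, some $n\beta_j$ for suitable $n<\omega$) already belongs to $\Gamma$. The reason is maximality: if no such $n\beta_j$ were in $\Gamma$, then for each $j$ the strictly larger family $\Gamma \cup \{n\beta_j : n<\omega\}$ — which is still contained in $\tau$ and still a covering — would fall outside $\mathcal{P}$, hence would admit an additive subcover involving some $n\beta_j$ together with finitely many elements of $\Gamma$. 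Combining these finitely many witnesses (one per $j$) with the single element $t(\beta_1,\dots,\beta_n)$ and using the monotonicity inequalities $\alpha\wedge\beta \le \alpha\odot\beta \le \alpha \le \alpha\oplus\beta$ together with the fact that $\beta_j \le n\beta_j$ for the relevant $n$ — so that $t(\beta_1,\dots,\beta_n)$ is bounded above by an additive combination of the $n\beta_j$ — one produces an additive subcover of $\Gamma$ itself, a contradiction. This is the step I expect to be the main obstacle: one must keep careful track of the multiplicities $n$ needed to absorb the term $t$ into a sum of the $n\beta_j$, and it is precisely here that the hypothesis that $S$ is \emph{large} (closed under $\alpha \mapsto n\alpha$) is essential, so that each $n\beta_j$ is itself a legitimate candidate to adjoin to $\Gamma$.

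With the primeness property in hand, finish as follows. Set $\Gamma_S := \Gamma \cap S'$, where $S' = \{n\beta : \beta \in S,\ n < \omega\}$ is the large subbase generated by $S$ (note $S \subseteq S'$ and $S'$ covers exactly when $S$ does, since $\beta \le n\beta$). I claim $\bigvee \Gamma_S = \1$. Indeed, take any point and any $\alpha \in \Gamma$; since $S$ is a subbase, $\alpha = \bigvee_{i} t_i(\beta_{i1},\dots,\beta_{in_i})$ with all $\beta_{ij} \in S$, and each joinand $t_i(\dots)$ is an open set $\le \alpha$; by the primeness property applied appropriately (after noting each $t_i(\dots) \in \tau$ and relating it to $\Gamma$ via maximality, or by arguing directly that $\bigvee\{\gamma \in \tau : \gamma \le \alpha,\ \gamma \text{ a subbasic term}\} = \alpha$), one extracts elements of $\Gamma_S$ whose join dominates $\alpha$; taking the join over all $\alpha \in \Gamma$ yields $\bigvee \Gamma_S \ge \bigvee \Gamma = \1$. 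Hence $\Gamma_S$ is a covering of $X$ consisting of sets from the large subbase $S'$ (equivalently, from $S$ up to the harmless passage to multiples), so by hypothesis it has an additive subcover $\{n_1\beta_1, \dots, n_k\beta_k\}$ with $n'_1(n_1\beta_1) \oplus \cdots \oplus n'_k(n_k\beta_k) = \1$. But $\Gamma_S \subseteq \Gamma$, so this is an additive subcover of $\Gamma$, contradicting $\Gamma \in \mathcal{P}$. Therefore $X$ is compact. $\blacksquare$
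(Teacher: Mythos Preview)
The paper does not give its own proof of this lemma; it is quoted from \cite{2020compactness} without argument. So there is nothing in the paper to compare against, and I can only assess your sketch on its own merits.

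Your overall plan --- Zorn's Lemma to produce a maximal open cover $\Gamma$ with no additive subcover, a primeness property for $\Gamma$ with respect to the subbase operations, then showing $\Gamma\cap S$ covers and invoking the hypothesis --- is the standard Alexander strategy and is surely what is done in \cite{2020compactness}. But as written there are two genuine problems.

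First, the chain of inequalities you invoke, ``$\alpha\wedge\beta \le \alpha\odot\beta \le \alpha \le \alpha\oplus\beta$'', is wrong: in every MV-algebra $\alpha\odot\beta \le \alpha\wedge\beta$, not the reverse (in $[0,1]$, $\max(0,a+b-1)\le\min(a,b)$). This is not a typo one can ignore, because $\odot$ is exactly the operation that makes the primeness step delicate.

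Second, and more substantively, the primeness argument you outline does not go through for $\odot$. Suppose $\beta_1,\beta_2\in S\setminus\Gamma$, with witnesses $m_j\beta_j\oplus\gamma_j=\1$, each $\gamma_j$ an additive combination from $\Gamma$. For $\wedge$ one checks $\max(m_1,m_2)(\beta_1\wedge\beta_2)\oplus\gamma_1\oplus\gamma_2=\1$, and for $\oplus$ upward closure of $\tau\setminus\Gamma$ suffices. For $\odot$, your recipe would need $N(\beta_1\odot\beta_2)\ge m_1\beta_1\odot m_2\beta_2$ for some $N$, and this is false: at any point with $\beta_1(x)=\beta_2(x)=\tfrac12$ one has $(\beta_1\odot\beta_2)(x)=0$, so $N(\beta_1\odot\beta_2)(x)=0$ for every $N$, while $(m_1\beta_1\odot m_2\beta_2)(x)=1$ once $m_1,m_2\ge 2$. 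Thus combining the witnesses as you describe does \emph{not} produce an additive subcover of $\Gamma$. Your use of the ``large'' hypothesis --- adjoining all $n\beta_j$ to $\Gamma$ at once --- gains nothing over adjoining $\beta_j$ alone, since additive subcovers already permit multiplicities. A useful observation you do not make explicit is that $\Gamma$ is downward closed in $\tau$ (if $\alpha\notin\Gamma$ and $\alpha\le\delta\in\tau$, the witness $m\alpha\oplus\gamma=\1$ gives $m\delta\oplus\gamma=\1$, so $\delta\notin\Gamma$); this immediately yields that $\Gamma\cap B_S$ covers, but the passage from $B_S$ down to $S$ still requires handling $\odot$ correctly, and that is where the real content of \cite{2020compactness} lies.
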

\begin{definition}\label{T0}
    An MV-space $(X, \tau)$ is a $T_{0}$ MV-space if for  $x\neq y$ in $X$, there is $\alpha \in \tau$ such that $\alpha(x)\neq \alpha(y)$.
\end{definition}
\begin{definition}\cite{ciro2016}\label{t2ax}
Let $(X, \tau)$ be an MV-topological space. $X$ is called a \emph{Hausdorff} (or \emph{separated}) \emph{space} if, for all $x \neq y \in X$, there exist $\alpha_x, \alpha_y \in \tau$ such that
\begin{enumerate}[(i)]
\item $\alpha_x(x) = \alpha_y(y) = 1$,
\item $\alpha_x \wedge \alpha_y = \0$.
\end{enumerate}
\end{definition}
    

\begin{lemma}\cite[Lemma 3.9]{ciro2016}\label{xclos}
If $(X, \tau)$ is an Hausdorff space, then all crisp singletons of $X$ are closed.
\end{lemma}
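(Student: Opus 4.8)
The plan is to prove the statement directly, by exhibiting the complement of a crisp singleton as an open set. Fix $x \in X$ and let $\chi_{\{x\}} \in [0,1]^X$ be the crisp singleton at $x$, i.e.\ $\chi_{\{x\}}(x) = 1$ and $\chi_{\{x\}}(z) = 0$ for all $z \neq x$. Since $\chi_{\{x\}} \in \tau^*$ means exactly $\chi_{\{x\}}^* \in \tau$, and the operation $^*$ on $[0,1]^X$ is the pointwise complement $t \mapsto 1-t$, it suffices to show that the fuzzy set $o$ with $o(x) = 0$ and $o(z) = 1$ for $z \neq x$ belongs to $\tau$.

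To build $o$, I would use the Hausdorff property pointwise. For each $y \neq x$, apply Definition \ref{t2ax} to the pair $x, y$: there exist $\alpha^{(y)}, \beta^{(y)} \in \tau$ with $\alpha^{(y)}(x) = \beta^{(y)}(y) = 1$ and $\alpha^{(y)} \wedge \beta^{(y)} = \0$. Evaluating the infimum at $x$ gives $0 = (\alpha^{(y)} \wedge \beta^{(y)})(x) = \min\{\alpha^{(y)}(x), \beta^{(y)}(x)\} = \min\{1, \beta^{(y)}(x)\} = \beta^{(y)}(x)$, so $\beta^{(y)}(x) = 0$ for every $y \neq x$. Now set $o := \bigvee_{y \neq x} \beta^{(y)}$, which lies in $\tau$ by axiom (ii) of Definition \ref{mvtop} (arbitrary joins of open sets are open). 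Then $o(x) = \bigvee_{y \neq x} \beta^{(y)}(x) = 0$, while for any $z \neq x$ we have $o(z) \geq \beta^{(z)}(z) = 1$, hence $o(z) = 1$. Thus $o = \chi_{\{x\}}^*$, so $\chi_{\{x\}}^* \in \tau$ and $\chi_{\{x\}}$ is closed.

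I do not expect a genuine obstacle here: the only two points requiring care are that the value of a pointwise infimum of fuzzy sets is the minimum of the values (so that separating $x$ from $y$ forces $\beta^{(y)}(x)=0$), and that the join in the definition of $o$ ranges over an arbitrary index set, which is permitted precisely because Definition \ref{mvtop}(ii) allows joins of arbitrary families. The argument is essentially the fuzzy transcription of the classical fact that in a Hausdorff space points are closed, with ``$\min$'' playing the role of intersection.
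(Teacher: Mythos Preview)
Your proof is correct. The paper itself does not prove this lemma---it is quoted from \cite{ciro2016} without argument---so there is no in-paper proof to compare against; that said, your argument is exactly the standard one and is what one would expect the original proof to be.
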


\section{MV-lattices and Positive MV-algebras}\label{MV-lattice}

In \cite{abbadini2022}, the authors presented positive MV-algebras as a generalization of MV-algebras, in the same way that distributive lattices generalize Boolean algebras. Here we consider a more general structure that captures the properties of positive MV-algebras without requiring an MV-algebra as a starting point.
\begin{definition}
 Let $(A, +,\cdot, \vee, \wedge, 0,1)$ be an algebra of type $ (2,2,2,2,0,0)$. $A$ is an \emph{MV-lattice} if $(A, \vee, \wedge, 0,1)$ is a bounded distributive lattice, $+$ and $\cdot$ are commutative operations that distribute over $\vee$ and $\wedge$, and satisfy the following equations for all $x,y$ in $A$.
\begin{enumerate}
	\item $x \cdot 1=x$,
	\item $x + 0=x$, and
  \item $(x \cdot y) \vee (x \wedge y)= x\wedge y$.
\end{enumerate}
\end{definition}
\begin{exm}
   Every bounded distributive lattice, with $+:= \vee$ and $\cdot := \wedge$ , is  an MV-lattice. 
     \end{exm}
Given an  MV-algebra $(A, \oplus,^*,0)$ and considering its derived operations, the algebra  $(A, \oplus, \odot, \vee, \wedge,0,1)$ is an MV-lattice. Any subalgebra $ (B, \oplus, \odot, \vee, \wedge,0,1)$ of such an MV-lattice (i.e., any subset of $A$ containing $0$ and $1$ and which is closed under the four binary operations) is called  a \emph{positive subreduct} of the MV-algebra $A$. The following definition is a reformulation of \cite[Definition 2.2]{abbadini2022}.
\begin{definition} An MV-lattice $(A, +, \cdot,\vee, \wedge, 0,1)$ is called a \emph{positive MV-algebra} if it is isomorphic to a positive subreduct of some MV-algebra.
\end{definition}
The class of positive MV-algebras is a quasi-variety generated by the positive standard MV-algebra $ [0,1]$. For more details, the reader may refer to \cite{abbadini2022}.
\begin{exm}
For each natural number n, let $ \textbf{F}_{n}$ be the algebra of McNaughton functions from $[0, 1]^{n}$ to $ [0, 1] $. Then the subset of increasing McNaughton functions, denoted by $ \textbf{F}_{n}^{\leq}$ is a positive subreduct of $\textbf{F}_{n}$. 
\end{exm}
\begin{definition}
A positive MV-algebra $A$ generates an MV-algebra $B$ if the positive subreduct of $B$ that is isomorphic to $A$ is a generating set of $B$.   A positive MV-algebra $A$ that generates an $\lcc$ MV-algebra, is called an \emph{$\lcc$ positive MV-algebra}.
\end{definition}
Obviously, since every finite MV-algebra is $\lcc$, every positive MV-algebra that generates a finite MV-algebra is $\lcc$ too. Moreover, every $\lcc$ MV-algebra is also an $\lcc$ positive MV-algebra.

In the case of MV-algebras  $ \LL_{n+1}$, it was shown in \cite{poiger} that all of their positive MV-subreducts are exactly the MV-subalgebras, i. e., the $\LL_{k+1}$'s, for all divisor $k$ of $n$. 

 \subsection*{Ideals  and Homomorphisms }
 The definition of ideals  of an MV-lattice is identical to the one of MV-algebra ideals.
 \begin{definition}
     An \emph{ideal} of an MV-lattice $(A, +,\cdot, \vee, \wedge, 0,1)$ is a subset $I$ of $A$ satisfying the following conditions:
     \begin{itemize}
         \item[I1)] $0 \in I$,
         \item[I2)] If $a \in I, b\in A$ and $b \leq a$ then $ b\in I$, and
         \item[I3)] If $x\in I$ and $y \in I$ then $a+b \in I$.
     \end{itemize}
     If in addition
     \begin{itemize}
         \item[I4)] $a \wedge b \in I$ implies that $a \in I$ or $b \in I$,
     \end{itemize}
    then $I$ is called a \emph{prime}.
 \end{definition}
 If $A$ is a positive subreduct of an MV-algebra $B$,  let us consider the set \begin{center}
         $\Max _{\leq}A= \{ I \cap A : I \in \Max B\} $
     \end{center}   ordered  by inclusion. For each prime ideal $I$ of $B$,  $I \cap A$ is a prime ideal of $A$. However, the elements of $ \Max_{\leq}A$ are  prime, but not necessarily  maximal ideals of $A$. 
\begin{exm}\label{L2} The set $A=\left\{(0,0), \left(0, \frac{1}{2} \right ),(0,1), \left(\frac{1}{2}, 1\right), (1,1)\right\} \subset \LL_{3}\times \LL_{3}$ is a generating positive MV-algebra of $\LL_{3} \times \LL_{3}$
\begin{center}
\begin{tikzcd}
                            &                                                           & \textcolor{blue}{{(1,1)}}                                                             &                                                            &                             \\
                            & {(1, \frac{1}{2})} \arrow[ru, no head]                    &                                                                      & \textcolor{blue}{(\frac{1}{2},1)} \arrow[lu, no head]                      &                             \\
{(1,0)} \arrow[ru, no head] &                                                           & (\frac{1}{2} ,\frac{1}{2}) \arrow[ru, no head] \arrow[lu, no head] &                                                            & \textcolor{blue}{{(0,1)}} \arrow[lu, no head] \\
                            & {(\frac{1}{2},0)} \arrow[ru, no head] \arrow[lu, no head] &                                                                      & \textcolor{blue}{{(0, \frac{1}{2})}}\arrow[lu, no head] \arrow[ru, no head] &                             \\
                            &                                                           & \textcolor{blue}{{(0,0)}} \arrow[ru, no head] \arrow[lu, no head]                      &                                                            &                            
\end{tikzcd}
\end{center}
     
where, the  maximal ideals of $ \LL_{3} \times \LL_{3}$ are $I_{1}=\{(0,0), (0, \frac{1}{2}),(0,1)\}$ and $I_{2}=\{(0,0), (\frac{1}{2},0),$ $ (1,0)\}$. Then, $\Max_{\leq}A=\{I_{1}^{A}, I_{2}^{A}\}$ where $I_{2}^{A}=\{(0,0)\} \subset I_{1}=I_{1}^{A}$. So, $I_{2}^{A}$ is not a maximal ideal of $A$.
\end{exm}
\begin{definition} Let 
	A and B be MV-lattices. A map $f:A \lto B$ is an \emph{MV-lattice homomorphism} iff  $f$ is a homomorphism of bounded lattices which preserves the operations $+$ and $\cdot$. 
\end{definition}
The category of MV-lattices and its morphisms will be denoted by $\MVL$, and we will denote by $\MV_{+}$ and $\MVlcc_{+}$ its full subcategories of positive MV-algebras  and $\lcc$ positive MV-algebras, respectively.
Given an MV-lattice $(A, +,\cdot, \vee, \wedge, 0,1)$, we consider the set of morphisms from $A$ to $[0,1]$, which is denoted by $H_{A}$ and is equipped with a partial order defined by $ f \leq g $ if and only if $ f(a) \leq g(a) $ for every $ a $ in $ A $. For $A$ as in Example \ref{L2}, we have that $H_{A}=\{f_{1}, f_{2}\} $ where $ f_{i}: A \lto \LL_{3} \subseteq [0,1] $ is defined by $f_{i}(x_{1},x_{2})=x_{i}$ for $(x_{1},x_{2}) \in A$ and $f_{1}\leq f_{2}$. Note that $f_{1}$ and $f_{2}$ are restrictions of homomorphisms from the MV-algebra $ \LL_{3}\times \LL_{3}$ to $[0,1]$. This is actually an instance of a general situation, as shown in the following lemma.

\begin{lemma}\label{fextension}
Let $B$ be an MV-algebra and  $A$ be a positive MV-subreduct of $B$ that generates $B$. Then, for any $f \in \operatorname{Hom}_{\MV_{+}}(A, [0,1]) $, there exists a unique  $\bar{f} $  such that $ \bar{f} \in \operatorname{Hom}_{\text{MV}}(B, [0,1]) $  and $ \bar{f} \mid_A = f $.
\end{lemma}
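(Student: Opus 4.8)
The plan is to settle uniqueness first, then to construct $\bar f$ on terms and reduce the whole problem to a single well-definedness statement, which is where the real content lies. \emph{Uniqueness.} Since $A$ generates $B$, Proposition~\ref{termo}, applied with the generating set $S=A$, shows that every $b\in B$ can be written as $b=t^B(a_1,\dots,a_n)$ for some term $t$ in the MV-language $\{\oplus,{}^*\}$ and some $a_1,\dots,a_n\in A$. Any $g\in\operatorname{Hom}_{\mathrm{MV}}(B,[0,1])$ then satisfies $g(b)=t^{[0,1]}(g(a_1),\dots,g(a_n))$; so if $g\mid_A=f$, then $g(b)=t^{[0,1]}(f(a_1),\dots,f(a_n))$ is determined by $f$ alone, giving at most one extension. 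This same formula is the candidate for $\bar f$: set $\bar f\bigl(t^B(a_1,\dots,a_n)\bigr):=t^{[0,1]}(f(a_1),\dots,f(a_n))$. Once this is shown to be well defined, it is immediate that $\bar f$ is an MV-homomorphism (terms are preserved by construction) and that $\bar f\mid_A=f$ (take $t$ a single variable). Equivalently, and more compactly: the inclusion $A\hookrightarrow B$ and the map $f$ induce, respectively, a surjective MV-homomorphism $\pi\colon F\twoheadrightarrow B$ and an MV-homomorphism $\phi\colon F\to[0,1]$, where $F$ is the free MV-algebra on the underlying set of $A$; and $\bar f$ exists exactly when $\ker\pi\subseteq\ker\phi$, which yields the factorisation $\phi=\bar f\circ\pi$.

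\emph{Reduction to one claim.} Using the MV-distance term $d(x,y)=(x\ominus y)\oplus(y\ominus x)$, for which $d(p,q)=0$ iff $p=q$, an equality $t^B(\vec a)=s^B(\vec b)$ is equivalent to $u^B(\vec a,\vec b)=0$ where $u$ is the single MV-term $d(t,s)$. Hence well-definedness of $\bar f$ follows from the claim
\[
(\star)\qquad u^B(c_1,\dots,c_m)=0 \ \Longrightarrow\ u^{[0,1]}\bigl(f(c_1),\dots,f(c_m)\bigr)=0,
\]
for every MV-term $u$ and all $c_1,\dots,c_m\in A$. (If one wishes, one may first assume $B$ semisimple: every homomorphism $B\to[0,1]$ factors through $B/\Rad B$, and so does $f$, because $a\in\Rad B$ is equivalent to $a\odot(na)=0$ for all $n$ — a system of equations in the negation-free operations — so for $a\in A\cap\Rad B$ one gets $f(a)\odot nf(a)=0$ in $[0,1]$ for all $n$, hence $f(a)=0$. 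This reduction is convenient but not essential.)

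\emph{Proof of $(\star)$.} Here I would use crucially that $A$ is a \emph{subreduct} of $B$: the operations $\oplus,\odot,\vee,\wedge,0,1$ and the lattice order of $B$ restrict to those of $A$, and $f$ — being an MV-lattice homomorphism into $[0,1]$ — preserves all of these and is order-preserving. It therefore suffices to show that the condition ``$u(\vec x)=0$'' is equivalent, over the variety of MV-algebras, to a finite conjunction of inequalities $\sigma_k(\vec x)\le\rho_k(\vec x)$ between terms $\sigma_k,\rho_k$ written in the negation-free language $\{\oplus,\odot,\vee,\wedge,0,1\}$. Granting this: from $u^B(\vec c)=0$ one gets $\sigma_k^B(\vec c)\le\rho_k^B(\vec c)$ in $B$; since the $c_i$ lie in $A$ and $\sigma_k,\rho_k$ are negation-free, both sides lie in $A$ and the inequalities hold in $A$; applying $f$ gives $\sigma_k^{[0,1]}(f(\vec c))\le\rho_k^{[0,1]}(f(\vec c))$ for each $k$; and reading the same equivalence in $[0,1]$ yields $u^{[0,1]}(f(\vec c))=0$. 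To obtain the negation-free characterisation of $\{u=0\}$, I would first push all negations down onto the variables via the De Morgan identities $(p\oplus q)^*=p^*\odot q^*$, $(p\odot q)^*=p^*\oplus q^*$, $(p\vee q)^*=p^*\wedge q^*$, $(p^*)^*=p$, and then invoke McNaughton's theorem: the zero-set of $u$ in $[0,1]^m$ is a finite union of rational polyhedra, each cut out by integer-coefficient affine inequalities on the cube, and each such inequality can be re-expressed using only $\oplus,\odot,\vee,\wedge,0,1$.

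\emph{Main obstacle.} The only non-routine point is this last re-expression, and the difficulty is the \emph{truncation} built into $\oplus$ and $\odot$: an honest affine inequality such as $c_0+\sum_i c_ix_i\le 0$ on $[0,1]^m$ must be rewritten without ${}^*$, but the negation-free term $c_0\cdot 1\oplus\bigoplus_i c_ix_i$ evaluates only to $\min(1,\,c_0+\sum_i c_ix_i)$, so a careful choice of normal form (or a finite case-split according to which partial sums can exceed $1$) is needed. This is where the combinatorics specific to negation-free MV-terms enters, in the spirit of the analyses in \cite{abbadini2022,poiger}; everything else is bookkeeping with Proposition~\ref{termo} and the defining properties of MV-lattice homomorphisms.
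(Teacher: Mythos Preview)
Your setup coincides with the paper's: both define $\bar f$ term-wise via Proposition~\ref{termo} and reduce well-definedness to the single implication $(\star)$, which in the paper's notation reads ``$d(t(\bar a),q(\bar c))=0$ in $B$ $\Rightarrow$ $d(t(f(\bar a)),q(f(\bar c)))=0$ in $[0,1]$''.  The paper's displayed chain $0=f(0)=\bar f(0)=\bar f(d(t(\bar a),q(\bar c)))=d(t(f(\bar a)),q(f(\bar c)))$ does not actually establish this: the third equality asserts that the two term-representations of the element $0\in B$ (as $0\in A$ and as $d(t(\bar a),q(\bar c))$) yield the same $\bar f$-value, which is precisely well-definedness at $0$.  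So the paper's argument is circular as written, and the sentence following the proof effectively defers the content to \cite{abbadini2022}.  You are therefore right to treat $(\star)$ as the heart of the matter rather than a formality.

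Your proposed route to $(\star)$---rewrite $u(\vec x)=0$ as a finite conjunction of inequalities $\sigma_k(\vec x)\le\rho_k(\vec x)$ with $\sigma_k,\rho_k$ in the negation-free signature, then push these through $f$---is a sound strategy, and your semisimple reduction is correct (once the equivalence holds in $[0,1]$ it transfers to any subalgebra of a power of $[0,1]$).  But you do not prove that such $\sigma_k,\rho_k$ exist, and this is not bookkeeping: it is essentially equivalent to the extension property itself.  Your McNaughton sketch has a mismatch---the theorem presents the zero set of $u$ as a finite \emph{union} of rational polytopes, whereas you need a finite \emph{intersection} of sets $\{\sigma_k\le\rho_k\}$---and the truncation difficulty you flag is a symptom of exactly this gap, not a separate obstacle.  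What is really needed is that every McNaughton function can be written as $\bigvee_k(\sigma_k\ominus\rho_k)$ with monotone $\sigma_k,\rho_k$; this is true but non-trivial, and is one formulation of the main structural result in \cite{abbadini2022}.  As it stands, your argument identifies the crux more honestly than the paper does, but leaves it open.
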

\begin{proof}
Since $ A $ generates $ B $, from Proposition \ref{termo}, each $b \in B $ can be expressed as
$b = t(a_{1}, \ldots, a_{s})$
where $ t $ is a term in the language $\{\oplus, ^{*}\} $ and $ a_i \in A $.

Defining $ \bar{f}(b) = t(f(a_{1}), \ldots, f(a_{s})) $, we have that $ \bar{f} $ preserves $\oplus $ and $^{*} $, and $ \bar{f} \mid_A = f $. However, the expression $ b = t(a_{1}, \ldots, a_{s}) $  in terms of elements of $A$ may not be unique. Let us prove that $\bar{f}$ is well defined. Let $ b = t(a_{1}, \ldots, a_{s}) = q(c_{1}, \ldots, c_{r}) $, with $ \bar{a}=(a_{1}, \ldots, a_{s})$, and $ \bar{c} = (c_{1}, \ldots, c_{r})$, we have
\[
\begin{aligned}
0 &= f(0) = \bar{f}(0) = \bar{f}(d ( t(\bar{a}), \bar{q}(\bar{c})) \\
&= d(t(f(a_{1}), \ldots, f(a_{s})), q(f(c_{1}), \ldots, f(c_{r}))),
\end{aligned}
\]
where $d$ is the distance on an MV-algebra, defined by 
$d(a,b)=(a \ominus b)\oplus (b \ominus a) $. Thus, $ \bar{f}(t(\bar{a})) = t(f(a_{1}), \ldots, f(a_{s})) = q(f(c_{1}), \ldots, f(c_{r})) = \bar{f}(q(\bar{c})) $, and $ \bar{f} $ is well-defined. The uniqueness of $ \bar{f} $ follows from the fact that $ A $ generates $ B $.
\end{proof}
The  previous proof  relies on elementary concepts but the result holds for any MV-algebra $C$ in place of the interval $[0,1]$. In  \cite{abbadini2022}, this generalization is established by using the equivalence between MV-monoidal algebras and unital commutative $\ell$-monoids.

The following result provides a necessary and sufficient condition for an MV-lattice $A$ to be embedded in a semisimple MV-algebra. We recall that maximal ideals of any MV-algebra are in bijective correspondence with homomorphisms from the given algebra to $[0,1]$. Therefore, an MV-algebra $B$ is semisimple if and only if the intersection of the kernels of all the homomorphisms from $B$ to $[0,1]$ is the identity congruence $\Delta$.

\begin{proposition}\label{A}
Let $ A $ be an MV-lattice. $ A $ is a positive subreduct  generating a semisimple MV-algebra $ B $ if and only if $ \bigcap\limits_{f \in H_{A} } \ker f = \Delta $.
\end{proposition}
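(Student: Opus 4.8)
The plan is to prove the two implications separately, using Theorem \ref{bell repr} and the extension Lemma \ref{fextension} as the main tools. First I would set up notation: write $\Delta$ for the diagonal (trivial) congruence on $A$, i.e. the kernel of an embedding, and note that $\bigcap_{f \in H_A}\ker f = \Delta$ is precisely the statement that the family $\{f : f \in H_A\}$ separates points of $A$, equivalently that the induced map $e \colon A \to [0,1]^{H_A}$, $a \mapsto (f(a))_{f \in H_A}$, is injective.

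For the forward implication, suppose $A$ is (isomorphic to) a positive subreduct of a semisimple MV-algebra $B$ with $\langle A\rangle = B$. By Theorem \ref{bell repr}, $B$ embeds in $[0,1]^{\Max B}$ via $b \mapsto \hat b$, where $\hat b(M) = \iota_M(b/M)$; in particular the evaluation maps $g_M \colon B \to [0,1]$, $b \mapsto \hat b(M)$, are MV-homomorphisms that jointly separate the points of $B$, hence also the points of $A$. Restricting each $g_M$ to $A$ gives an element $g_M\!\restriction_A \in H_A$, and since the $g_M$ already separate the points of $A$, so do their restrictions. Therefore $\bigcap_{f \in H_A}\ker f \subseteq \bigcap_{M}\ker(g_M\!\restriction_A) = \Delta$, and the reverse inclusion is trivial, so equality holds.

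For the converse, assume $\bigcap_{f \in H_A}\ker f = \Delta$. I would take $B := \langle A \rangle$ to be the MV-subalgebra generated by $A$ inside the free MV-algebra over $A$'s universe, or more concretely inside any MV-algebra containing $A$ as a positive subreduct; by definition such a $B$ makes $A$ a generating positive subreduct of an MV-algebra, so it remains only to show $B$ can be chosen semisimple, i.e. that $\mathrm{Rad}\,B = \{0\}$. By Lemma \ref{fextension}, every $f \in H_A$ extends uniquely to $\bar f \in \hom_{\mathrm{MV}}(B,[0,1])$. Now consider the homomorphism $\Phi \colon B \to [0,1]^{H_A}$ defined by $\Phi(b) = (\bar f(b))_{f \in H_A}$. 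I claim $\Phi$ is injective: if $\Phi(b) = \Phi(b')$ then $\bar f(b) = \bar f(b')$ for all $f$, and writing $b = t(\bar a)$, $b' = q(\bar c)$ with entries in $A$ (Proposition \ref{termo}), the distance $d(b,b')$ is again a term in elements of $A$, so $\hat{d(b,b')}$ is separated by the $f$'s — more carefully, one shows $d(b,b') \in \ker \bar f$ for all $f$ forces, via the generation of $B$ by $A$ and the hypothesis on $A$, that $d(b,b') = 0$. Since $[0,1]^{H_A}$ is semisimple and injectivity of $\Phi$ gives $\mathrm{Rad}\,B \subseteq \ker\Phi = \{0\}$, $B$ is semisimple (provided $B \neq \{0\}$, which holds as $0 \neq 1$ in $A$), and $A$ generates it by construction.

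The main obstacle is the injectivity claim for $\Phi$ in the converse. The subtlety is that $d(b,b')$ lands in $B$ but a priori need not lie in $A$, so the hypothesis $\bigcap_{f\in H_A}\ker f = \Delta$, which only constrains elements of $A$, does not immediately apply. The fix is to observe that an element $e$ of $B$ with $\bar f(e) = 0$ for all $f \in H_A$ satisfies, for every $a \in A$ with $a \leq e$ (such $a$ exist and approximate $e$ from below because $e$ is a lattice-theoretic combination of elements of $A$ and products/sums only decrease/are controlled), $f(a) = 0$ for all $f$, hence $a = 0$ by hypothesis; then a careful argument using the distributive-lattice structure and the defining inequalities (3)–(4) of an MV-lattice shows $e = \bigvee\{a \in A : a \leq e\} = 0$. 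Making this approximation argument precise — essentially showing that $A$ is "join-dense from below" in $B$ in the relevant sense, or alternatively routing the whole argument through the $\ell$-monoid correspondence of \cite{abbadini2022} as the authors hint after Lemma \ref{fextension} — is where the real work lies; the rest is bookkeeping with Theorem \ref{bell repr}.
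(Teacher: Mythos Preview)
Your forward direction is essentially the paper's argument: restrict the separating family of MV-homomorphisms $B\to[0,1]$ coming from semisimplicity (Theorem~\ref{bell repr} / Remark~\ref{B[0,1]}) down to $A$ to get a separating family in $H_A$.

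Your converse, however, takes an unnecessarily hard route and has a genuine gap. You propose to start with an abstract MV-algebra $B$ containing $A$ and then prove $B$ (or $B/\mathrm{Rad}\,B$) is semisimple by showing $\Phi\colon B\to[0,1]^{H_A}$ is injective. Two problems. First, forming ``$B=\langle A\rangle$ inside any MV-algebra containing $A$ as a positive subreduct'' presupposes that $A$ \emph{is} a positive MV-algebra, which is part of the conclusion; and the alternative ``inside the free MV-algebra on the underlying set of $A$'' does not give you $A$ as a subreduct at all, since sending elements of $A$ to free generators is not a homomorphism. Second, the injectivity obstacle you isolate is real, and the ``join-dense from below'' fix is not correct in general: an element of $B$ of the form $a^{*}$ with $a\in A$ need not be a join of elements of $A$ lying below it, so there is no reason to expect $e=\bigvee\{a\in A:a\le e\}$.

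The paper avoids all of this with a one-line construction: define $T\colon A\to[0,1]^{H_A}$ by $T(a)=\tilde a$, where $\tilde a(f)=f(a)$. This is an MV-lattice homomorphism (operations in $[0,1]^{H_A}$ are pointwise and every $f\in H_A$ preserves them), and the hypothesis $\bigcap_{f}\ker f=\Delta$ says exactly that $T$ is injective. Hence $A\cong\tilde A$ is a positive subreduct of $[0,1]^{H_A}$, and $B:=\langle\tilde A\rangle\subseteq[0,1]^{H_A}$ is the desired MV-algebra: it is generated by (a copy of) $A$, and it is semisimple automatically because any MV-subalgebra of a power of $[0,1]$ is semisimple. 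No approximation argument, and no appeal to Lemma~\ref{fextension}, is needed in this direction.
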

\begin{proof}
Let $H_{A}$ be the set of  homomorphisms  $f:A \lto [0,1]$
 and  $\tilde{A}=\{\tilde{a} \mid a\in A\}$, where 
\begin{align*}
	\tilde{a}:H_{A} & \lto [0,1]\\
	f & \mapsto f(a).
\end{align*}
For all $f \in H_A$, consider $\ker f=\{(a,b)\in A \times A \mid f(a)=f(b) \}$, and the map
\begin{align*}
    T:A &\lto \tilde{A}\\
    a& \mapsto \tilde{a}.
\end{align*}
If $\bigcap\limits_{f \in H_{A}}\ker f= \Delta$, $T$ is an isomorphism of MV-lattices and
$ \tilde{A} \subset [0,1]^{H_{A}} $ generates a semisimple MV-algebra $ B = \langle\tilde{A}\rangle \subseteq [0,1]^{H_{A}} $.

Conversely, if $A$ is a positive subreduct of a semisimple MV-algebra $B$ and  $A$  generates $B$, by Lemma \ref{fextension}, $\left|H_{A}\right| = \left|\operatorname{Hom}_{\MV}(B, [0,1])\right| $ and, since $ B $ is semisimple, we have that

\begin{center}
$\{0\} = \operatorname{Rad} B = \bigcap\limits_{f \in \operatorname{Hom}_{\MV}(B, [0,1])} \bar{f}^{-1}[0] \supseteq \bigcap\limits_{f \in H_{A}} \bar{f}^{-1}[0].$
\end{center}
 Let  $f \in H_{A}$. If $ f(a) = f(b) $ for all $ f \in H_{A} $, then $$0=d(f(a), f(b))=d(\bar{f}(a), \bar{f}(b))=\bar{f}(d(a, b)).$$
 So, $d(a,b)\in \bigcap\limits_{\bar{f} \in H_{A}} f^{-1}[0] = \{0\}$, hence  $a = b$ and thus $\bigcap\limits_{f \in H_{A}}\ker f= \Delta$.
\end{proof}

It is known that an MV-algebra  can be generated by various positive algebras. According to Lemma \ref{fextension}, the corresponding sets of homomorphisms from these algebras into $[0,1]$ are equipotent. However, these sets are not necessarily isomorphic partially ordered sets, as the following example shows.
\begin{exm}\label{L223} Given the MV-algebra $E=\LL_{3}\times \LL_{3}\times \LL_{3}$, we have that the set\\ $\operatorname{Hom}_{\MV}(E, [0,1])=\{f_{1}, f_{2}, f_{3}\}$ where $f_{i}(x_{1}, x_{2}, x_{3})=x_{i}$. So, $ \Max E=\{I_{1}, I_{2}, I_{3}\}$ where $I_{i}=f_{i}^{-1}[0]=\{(x_{1},x_{2},x_{3}) \in E \mid x_{i}=0\}$. Moreover, by Lemma \ref{fextension}  for any positive subreduct $A$ that generates $E$, the elements of $H_{A}$ are restrictions $f_{i}|_{A}$ that we will denote by $f_{i_{A}}$. Now we consider three different generating positive MV-algebras of $E$:
\begin{enumerate}
    \item The positive MV-algebra
		$$\begin{array}{ll}
		A= & \left\{ \left(0,0,0\right), \left(0, \frac{1}{2},0\right), \left(0,1,0\right), \left(\frac{1}{2},1,0\right), \left(1,1,0\right), \right.\\
		 & \left. \left(0,\frac{1}{2},\frac{1}{3}\right), \left(0, 1,\frac{1}{3}\right),  \left(\frac{1}{2}, 1, \frac{1}{3}\right), \left(1, 1, \frac{1}{3}\right), \left(0,1,\frac{2}{3}\right), \right. \\
		 & \left. \left(\frac{1}{2},1,\frac{2}{3}\right), \left(1,1,\frac{2}{3}\right), \left(0,1,1\right), \left(\frac{1}{2},1,1\right),  \left(1,1,1\right)\right\}
		\end{array}$$ has $H_{A}=\{f_{1_{A}}, f_{2_{A}}, f_{3_{A}}\}$ with $f_{1_{A}} \leq f_{2_{A}}$, $f_{3_{A}} \leq f_{2_{A}}$, and $f_{1_{A}}$ and$f_{3_{A}}$ uncomparable.  Moreover $\Max_{\leq}A=\{I_{1}^{A}, I_{2}^{A}, I_{3}^{A}\}$ with $I_{1}=\{(0,0,0), (0,\frac{1}{2},0) $, $(0,1,0), $$ (0, \frac{1}{2}, \frac{1}{3}) $, $(0,1,\frac{1}{3}),$ $ (0,1, \frac{2}{3}), \ (0,1,1)\}$, $I_{2}=\{(0,0,0)\} $, and  $I_{3}=\{(0,0,0),$ $ (0,\frac{1}{2},0), (0,1,0),$ $ (\frac{1}{2},1,0),$ $ (1,1,0)\}$. So, $H_{A}$ and $\Max_{\leq}A$ have the following Hasse diagrams 
\begin{center}
   \begin{tikzcd}
                              & H_{A}                         &           &  &                               & \Max_{\leq}A &                               \\
                              & f_{2_{A}} \arrow[rd, no head] &           &  & I_{1}^{A} \arrow[rd, no head] &               & I_{3}^{A} \arrow[ld, no head] \\
f_{1_{A}} \arrow[ru, no head] &                               & f_{3_{A}} &  &                               & I_{2}^{A}     &                              
\end{tikzcd}
\end{center}
\item For the positive MV-algebra $B=A \cup \left\{\left(\frac{1}{2}, \frac{1}{2},0\right),\ \left(\frac{1}{2}, \frac{1}{2}, \frac{1}{3}\right)\right\}$, we have that $I_{1}^{B}=I_{1}^{A}$, $I_{2}^{B}=I_{2}^{A}$ and $I_{3}^{B}= I_{3}^{A} \cup  \left\{\left(\frac{1}{2}, \frac{1}{2},0\right)\right\}$. Moreover, since that $f_{2}\left(\frac{1}{2}, \frac{1}{2},0\right) \geq f_{1}\left(\frac{1}{2}, \frac{1}{2},0\right)$  and $f_{2}\left(\frac{1}{2}, \frac{1}{2},\frac{1}{3}\right) \geq f_{1}\left(\frac{1}{2}, \frac{1}{2},\frac{1}{3}\right)$, then $H_{B}$ and $H_{A}$ are order isomorphic sets, as well as $\Max _{\leq }A$ and  $\Max _{\leq }B$. 
 \item  The positive MV-algebra
 $$C= B \cup \left\{\left(0,0, \frac{1}{3}\right), \left(0,0, \frac{2}{3}\right), \left(\frac{1}{2}, \frac{1}{2}, \frac{2}{3}\right), \left(0,\frac{1}{2},\frac{2}{3}\right), \left(0,0,1\right), \left(0, \frac{1}{2},1\right), \left(\frac{1}{2}, \frac{1}{2},1\right)\right\}$$
 is such that
$$\begin{array}{l} I_{1}^{C}=I_{1}, \\
I_{2}^{C}=\left\{\left(0,0,0\right), \left(0,0, \frac{1}{3}\right), \left(0,0, \frac{2}{3}\right), (0,0,1)\right\}, \text{ and} \\
I_{3}^{C}=\left\{(0,0,0), \left(0,\frac{1}{2},0\right), (0,1,0), \left(\frac{1}{2}, \frac{1}{2},0\right), \left(\frac{1}{2},1,0\right), (1,1,0)\right\}.
\end{array}$$
Moreover $f_{1_{C}} \leq f_{2_{C}}$ but $f_{2_{C}}$ and $f_{3_{C}}$ are not comparable, since
$$f_{3}\left(\frac{1}{2}, \frac{1}{2}, \frac{2}{3}\right)=\frac{2}{3}>\frac{1}{2}=f_{2}\left(\frac{1}{2}, \frac{1}{2}, \frac{2}{3}\right) \text{ and } f_{3}\left(\frac{1}{2}, \frac{1}{2},0\right)=0 < \frac{1}{2}=f_{2}\left(\frac{1}{2}, \frac{1}{2},0\right).$$ 
    
$$\begin{tikzcd}
H_{C}                         &           &  &  & \text{Max}_{\leq}(C)          &           \\
f_{2_{C}} \arrow[dd, no head] &           &  &  & I_{1}^{C} \arrow[dd, no head] &           \\
                              & f_{3_{C}} &  &  &                               & I_{3}^{C} \\
f_{1_{C}}                     &           &  &  & I_{2}^{C}                     &          
\end{tikzcd}$$
 So, $H_{A}$ and $H_{C}$ are equipotent sets but are not order isomorphic.
\end{enumerate}
\end{exm}
We observe that the sets of morphism $H_A$ and $H_B$ are isomorphic, which is not the case for all generating algebras, as exemplified by $C$. This situation motivates to introduce a definition which includes those generating algebras having this property.
\begin{definition}\label{R_A}
    Let $A$ be a generating positive MV-algebra of an MV-algebra $B$. We define
    \begin{center}
        $\preccurlyeq_{A}=\{(f_{i}, f_{j}) \in \operatorname{Hom}_{\MV}(B, [0,1])^{2} \mid f_{i_{A}} \leq f_{j_{A}}\}$
    \end{center}
    and, if $A'$ is  a generating positive MV-algebra of $B$ such that $\preccurlyeq_{A'}=\preccurlyeq_{A}$, we will say that $A$ and $A'$ are \textit{compatible.}
    
    In Example \ref{L223}, $A$ and $B$ are compatible, but $C$ is not compatible with either of them.
\end{definition}
\begin{remark}
Note that the set $\preccurlyeq_{A}$ in the previous definition collects the arrows from the Hasse diagram of $H_{A}$. For two generating positive MV-algebras of $B$, $A$ and $C$, if $A \subseteq C$, then $\preccurlyeq_{C} \subseteq \preccurlyeq_{A}$ and \begin{align*}
         T: \Max_{\leq}C &\lto \Max_{\leq}A\\
         I_{i}^{C}& \longmapsto \ I_{i}^{A}
     \end{align*}
     is an order-preserving bijective function. Moreover, $\preccurlyeq_{B}$ is always the identity relation. Indeed, if $(f,g) \in \preccurlyeq_{B}$ for some $f,g \in \operatorname{Hom}_{\MV}(B, [0,1])$, then $g^{-1}[0] \subseteq f^{-1}[0]$ and they are both maximal ideals of $B$, therefore $g^{-1}[0] = f^{-1}[0]$, whence $f=g$.
\end{remark}
   \begin{definition}
Let $A$ be a positive MV-algebra that generates an MV-algebra $B$. $A$ is called $H$-\emph{complete} if, for each generating positive MV-algebra $E$, the following holds 
$$A \subseteq E \text{ and } \preccurlyeq_{A} = \preccurlyeq_{E} \text{ imply } A=E.$$
   \end{definition}
In Example \ref{L223}, $B$ and $C$ are $H$-complete, while $A$ is not. 

On the other hand, we note that there is a connection between the edges of the Hasse diagrams of $H_{A}$ and $\Max_{\leq}A$.  It is clear that if $f_{i_{A}} \leq f_{j_{A}}$, then $I_{j}^{A}=f_{j_{A}}^{-1}[0] \subseteq f_{i_{A}}^{-1}[0]=I_{i}^{A}$. Is the converse also true? The answer is affirmative, and to prove this, we need the following result.
\begin{lemma}\label{separacion}If $x$ and $y$ are in the interval 
$[0,1]$  and $x<y $, then there exists a term 
$t$ in the language $\{\oplus, \odot\}$ such that 
$t(x)=0$ and $t(y)=1$.
\end{lemma}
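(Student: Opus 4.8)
The plan is to construct the term $t$ explicitly from two basic MV-algebra operations available in the language $\{\oplus, \odot\}$: the $n$-fold sum $n(\cdot)$ (which pushes any strictly positive value toward $1$) and the $n$-fold product $(\cdot)^n$ (which pulls any value strictly below $1$ toward $0$). Since $x < y$ in $[0,1]$, first I would separate the two points numerically: choose rationals $p/q$ with $x < p/q < y$, so that it suffices to find a term sending everything $\leq x$ to $0$ and everything $\geq y$ to $1$, with $p/q$ as a threshold. The idea is to first apply a ``truncated subtraction'' by the constant-like element built from $1$; but since the language is only $\{\oplus, \odot\}$ and constants $0,1$ are term-definable ($1 = x \oplus x^{?}$ is not available without $^*$, so instead use that in any term we may only feed in the single variable), I need to be slightly more careful: the term $t$ acts on a single real input $z$, and $t(z)$ is a McNaughton function of $z$ on $[0,1]$ with integer coefficients. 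So the real content is: the piecewise-linear functions on $[0,1]$ generated by $\oplus$ and $\odot$ from the identity (together with $0,1$ if we allow constant subterms) include functions that are $0$ on $[0,x]$ and $1$ on $[y,1]$.

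The key steps, in order: (1) Recall that $n z := z \oplus z \oplus \cdots \oplus z$ ($n$ times) equals $\min(nz, 1)$, and $z^n := z \odot z \odot \cdots \odot z$ equals $\max(nz - (n-1), 0)$; these are both terms in $\{\oplus,\odot\}$. (2) Pick a natural number $n$ with $1/n < y - x$, and consider whether $x$ and $y$ straddle a point of the form $k/n$; after possibly enlarging $n$ we may assume there is an integer $k$ with $x < k/n \leq y$, in fact with $x \leq (k-1)/n$ and $k/n \leq y$ (this is where I use $1/n$ small relative to the gap, choosing $n$ large enough and $k = \lceil nx \rceil + 1$ or similar). (3) The function $g(z) = (nz)^{?}$ — more precisely, first apply $n(\cdot)$ to get $\min(nz,1)$, which is already $1$ for $z \geq 1/n$; to get the threshold at $k/n$ instead, apply $n(\cdot)$ after first applying the $k$-th power appropriately, or compose: the function $z \mapsto n(z^{k})$... let me instead use the cleaner route: $h(z) := \big(n\,z\big)$ sends $[1/n,1]$ to $1$ and is linear below; then iterate/compose to sharpen. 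Actually the simplest explicit term: let $t(z) = n\big( z \odot z \odot \cdots \big)$ chosen so that the ``on'' region is $[y,1]$ and ``off'' region is $[0,x]$; concretely $t(z) = N\big( (z)^{m} \ominus \text{(something)}\big)$ — but $\ominus$ is not in the language. So I would argue purely with $\oplus, \odot$: the term $z \mapsto N z$ is $0$ only at $z=0$, which is too weak, so I must compose a product first: $z \mapsto N(z^{m})$ equals $\min(N\max(mz - (m-1),0),1)$, which is $0$ for $z \leq (m-1)/m$ and $1$ for $z \geq (m-1)/m + 1/(Nm)$. (4) Now choose $m$ with $(m-1)/m \geq x$, i.e. $m \geq 1/(1-x)$ (using $x < 1$), and then choose $N$ with $(m-1)/m + 1/(Nm) \leq y$, which is possible since $(m-1)/m \leq x < y$ so the gap $y - (m-1)/m$ is positive and we just need $1/(Nm)$ smaller than it. Then $t(z) := N(z^{m}) = \underbrace{(z^m) \oplus \cdots \oplus (z^m)}_{N}$ satisfies $t(x) = 0$ and $t(y) = 1$, and $t$ is a term in $\{\oplus, \odot\}$.

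I would then close by remarking on the degenerate case $x = 0$: if $x = 0$ and $y > 0$, pick $m=1$ (so $z^m = z$) and $N$ with $1/N \leq y$, giving $t(z) = Nz$ with $t(0)=0$, $t(y)=1$; the case $x > 0$ is handled as above since then $x < 1$ automatically fails only if $x=1$, which is impossible as $x < y \leq 1$. The main obstacle I anticipate is purely bookkeeping: verifying that the two chosen parameters $m$ and $N$ can be picked consistently (the constraint on $m$ only involves $x$, and once $m$ is fixed the constraint on $N$ involves the now-fixed value $(m-1)/m$ and $y$), so there is no circularity — but one must be slightly careful that $(m-1)/m$ can be made $\geq x$ while still $< y$, which holds because $(m-1)/m \to 1$ and we can also simply take $(m-1)/m$ in the interval $[x, y)$ directly when $x<1$; this elementary interleaving of the inequalities is the only thing needing care, and it is routine.
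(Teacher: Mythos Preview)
Your proposed term $t(z)=N(z^{m})$ has the right shape, but the claim that one can always choose $m$ with $(m-1)/m\in[x,y)$ is false, and this is not a bookkeeping issue. The numbers $(m-1)/m=0,\tfrac12,\tfrac23,\tfrac34,\ldots$ have large gaps for small $m$; for instance if $x=0.4$ and $y=0.45$ there is no such $m$ (the smallest $m$ with $(m-1)/m\geq 0.4$ is $m=2$, giving $(m-1)/m=0.5>y$), and then $y^{m}=\max(2\cdot 0.45-1,0)=0$, so $t(y)=0$ regardless of $N$. The fact that $(m-1)/m\to 1$ only guarantees that $(m-1)/m$ eventually exceeds $x$, not that it ever lands in the interval $[x,y)$. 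So terms of the form $N(z^{m})$ alone do not suffice.

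The paper's proof deals with exactly this obstruction by \emph{iterating}: it alternates between $z\mapsto kz$ (Step~1, used when both values are below $\tfrac12$) and $z\mapsto z^{h}$ (Step~2, used when both are above $\tfrac12$), each time at least doubling the gap $y-x$, until the pair straddles $\tfrac12$, at which point a single application of $z\mapsto n(z^{2})$ or $z\mapsto (2z)^{m}$ finishes. Your idea is morally the same building block, but you need the recursive spreading step---a composition of several such terms, not just one---to guarantee the threshold can be placed between $x$ and $y$. Once you add that iteration (and check it terminates, which follows from the gap-doubling), your argument becomes essentially the paper's.
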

\begin{proof}
First, let us consider the following two steps for $x, y \in [0,1]$:

\underline{	Step 1}. If $x<y<\frac{1}{2}$ we consider $k=\max\left\{n \in \N \mid \ y \leq \frac{1}{n}\right\}$. Then $k \geq 2$, $k  x < k  y \leq 1$ and consequently $$k y \ominus k  x= ky-kx=k(y-x)\geq 2(y-x)> y \ominus x$$ with $k  y >\frac{1}{2}$.

\underline{	Step 2}. If $\frac{1}{2}<x<y$ we consider $h=\max\left\{ n \in \N \mid x > \frac{n-1}{n} \right\}$ and $x^{h}$ as  the product $\odot$ of $x$ with itself n times. Then $h \geq 2$ and $x^{h}= \max\{0, hx-h+1\}=hx-h+1$. Moreover $$y^{h} \ominus x^{h}=y^{h}-x^{h}=h(y-x)=h(y \ominus x) \geq 2(y\ominus x) > y\ominus x.$$
	 Now we consider the following cases:
\begin{enumerate}
	\item If $x\leq \frac{1}{2} < y$, then  $x^{2}=0$, $y^{2} \neq 0$, and there exists $n \in \N$ such that $n y^{2}=1$ and $n x^{2}=0$. That is, there is a term defined by $t(a)=n (a)^{2}$ for $ a \in [0,1]$ such that $t(y)=1$ and $t(x)=0$.
	\item If $x<\frac{1}{2}\leq y$, \ $2x<1=2y$ and, for some $m \in \N$, $(2x)^{m}=0$ and $(2y)^{m}=1$. So, the term defined by $t(a)=(2a)^{m}$  for $a \in [0,1]$ is such that $t(y)=1$ and $t(x)=0$.
	\item If $x<y<\frac{1}{2}$, we apply Step 1  and we obtain that $y_{1} \ominus x_{1} \geq 2(y\ominus x)>y \ominus x$ where $y_{1}=ky$ and $x_{1}=kx$. Moreover, since  $y_{1}>\frac{1}{2}$, then either $x_{1}\leq \frac{1}{2}$ or $x_{1}>\frac{1}{2}$. If $x_{1} \leq \frac{1}{2}$, by case 1, there is $n_{1} \in \N$ such that $n_{1}x_{1}^{2}=0$ and $n_{1}y_{1}^{2}=1$. That is, there is a term $t(a)=n_{1}(ka)^{2}$ such that $t(x)=0$ and $t(y)=1$. If $x_{1}>\frac{1}{2}$, we apply Step 2 and we obtain
	$$y_{2} \ominus x_{2}=y_{1}^{h}-x_{1}^{h}=h(y_{1} \ominus x_{1}) \geq 2(y_{1}\ominus x_{1})> y_{1} \ominus x_{1}$$ where $y_{2}=y_{1}^{h}$ and $x_{2}=x_{1}^{h}$ 
 . Note that $x_{2} < \frac{1}{2}$ and again we can apply Step 1 or case 2 according to the possible values of $y_{2}$. Thus we define by recursion a sequence of pairs $(x_{i},y_{i})$ such that  for each $i \geq 1$, \ $y_{i+1}\ominus x_{i+1}\geq 2(y_{i} \ominus x_{i})> y_{i} \ominus x_{i}$. For this reason, for some $n \in \N$, $x_{n}$ and $y_{n}$ will satisfy the conditions of case 1 or 2 and we shall obtain a term $t$ such that $t(x)=0$ and $t(y)=1$. 
	\item If $\frac{1}{2}< x<y$, we apply Step 2 as in case 3 and we get to the same conclusion. 
\end{enumerate}	
\end{proof}

  \begin{proposition}\label{hmax}
      Let $A$ be a generating positive MV-algebra of an MV-algebra $B$ and $f_{i}, f_{j} \in H_{A}$. Then $f_{i} \leq f_{j}$ if and only if $f_{j}^{-1}[0] \subseteq f_{i}^{-1}[0]$. 
  \end{proposition}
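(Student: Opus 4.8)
The forward direction is already observed in the text: if $f_{i_A}\le f_{j_A}$ then $f_j^{-1}[0]=\{a\in B: f_j(a)=0\}\supseteq$ its trace on $A$ is contained in that of $f_i$; more precisely, using the extensions $\bar f_i,\bar f_j$ from Lemma~\ref{fextension}, $f_i\le f_j$ on $A$ gives $\bar f_i\le \bar f_j$ on all of $B$ (both sides are $\{\oplus,{}^*\}$-terms in elements of $A$ and the order is preserved on the generators, hence on generated elements, since $\oplus$ and ${}^*$ on $[0,1]$ behave monotonically once one is careful — actually one argues directly: $\bar f_j(b)=0$ with $b=t(\bar a)$ forces, via $\bar f_i(b)\le \bar f_j(b)$, that $\bar f_i(b)=0$). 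So $f_j^{-1}[0]\subseteq f_i^{-1}[0]$, giving $I_j^A\subseteq I_i^A$ on traces.

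The substantive direction is the converse. Assume $f_j^{-1}[0]\subseteq f_i^{-1}[0]$ but, for contradiction, that $f_{i_A}\not\le f_{j_A}$, i.e.\ there exists $a\in A$ with $f_i(a)>f_j(a)$; write $x:=f_j(a)<f_i(a)=:y$ in $[0,1]$. The plan is to apply Lemma~\ref{separacion}: there is a term $t$ in the language $\{\oplus,\odot\}$ with $t(x)=0$ and $t(y)=1$. Now $t(a)$ need not lie in $A$ in general — but note that the language $\{\oplus,\odot\}$ is exactly the MV-lattice operations $+,\cdot$ available in $A$ (here using that $A$ is a positive MV-algebra, so $\oplus=+$ and $\odot=\cdot$ are genuine operations of $A$), hence $c:=t(a)\in A$. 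Apply the homomorphism property: $f_j(c)=t(f_j(a))=t(x)=0$, so $c\in f_j^{-1}[0]$; but $f_i(c)=t(f_i(a))=t(y)=1\ne 0$, so $c\notin f_i^{-1}[0]$. This contradicts $f_j^{-1}[0]\subseteq f_i^{-1}[0]$. Hence $f_i(a)\le f_j(a)$ for all $a\in A$, i.e.\ $f_i\le f_j$ in $H_A$.

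The one point requiring care — and the main obstacle — is ensuring that the separating term $t$ can be evaluated inside $A$, not merely inside the ambient MV-algebra $B$ or inside $[0,1]$. This is why it matters that $A$ is a \emph{positive} MV-algebra (an MV-lattice that is a positive subreduct): then $+$ and $\cdot$ (equivalently $\oplus$ and $\odot$) are total operations of $A$, multiples $na$ and powers $a^h$ stay in $A$, and the term $t$ produced by Lemma~\ref{separacion}, being built only from $\oplus$ and $\odot$ (note it uses no ${}^*$), sends $A$ to $A$. Everything else is a direct computation with the homomorphism identities $f(a\oplus b)=f(a)\oplus f(b)$, $f(a\odot b)=f(a)\odot f(b)$, which hold since $f\in H_A$ preserves $+$ and $\cdot$. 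It is also worth remarking that, as in the final \emph{Remark} preceding the statement, the same argument shows $\preccurlyeq_A$ is literally read off from the inclusion order on $\Max_{\le}A$.
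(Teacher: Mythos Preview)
Your converse (sufficiency) argument is exactly the paper's: assume $f_i\not\le f_j$, pick $a\in A$ with $f_j(a)<f_i(a)$, invoke Lemma~\ref{separacion} to get a $\{\oplus,\odot\}$-term $t$ with $t(f_j(a))=0$ and $t(f_i(a))=1$, and conclude $t(a)\in f_j^{-1}[0]\setminus f_i^{-1}[0]$. Your added remark that $t(a)\in A$ because $A$, being a positive MV-algebra, is closed under $+$ and $\cdot$ is a worthwhile clarification that the paper leaves implicit.

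Your treatment of the forward direction, however, is tangled and contains a false step. You write that $f_i\le f_j$ on $A$ ``gives $\bar f_i\le \bar f_j$ on all of $B$''; this is not true. For any $a\in A$ with $f_i(a)<f_j(a)$ one has $a^*\in B$ and $\bar f_i(a^*)=1-f_i(a)>1-f_j(a)=\bar f_j(a^*)$, so the pointwise inequality cannot propagate through ${}^*$. Your parenthetical ``direct'' argument then still invokes $\bar f_i(b)\le \bar f_j(b)$, which is precisely the unproved claim. None of this machinery is needed: since $f_i,f_j\in H_A$ have domain $A$ (not $B$; your set-builder $\{a\in B:f_j(a)=0\}$ is a slip), the implication is one line --- if $f_j(a)=0$ then $f_i(a)\le f_j(a)=0$, hence $f_i(a)=0$. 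The paper accordingly dispatches this direction with ``the necessity is clear.''
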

  \begin{proof} 
The necessity is clear. To prove sufficiency, we proceed by contrapositive, assuming $f_{i}, f_{j} \in H_{A}$ such that $f_{i} \nleq f_{j}$. Then $f_{j}(a) < f_{i}(a)$ for some $a \in A$ and, applying the previous lemma, we have that there exists a term $t$ in the language $\{ \oplus, \odot\}$ such that $t(f_{j}(a))=0$ and $t(f_{i}(a))=1$. Since $f_{i}$ and $f_{j}$ preserve $\oplus $ and $\odot$, $0=t(f_{j}(a))=f_{j}(t(a))$ and $1=t(f_{i}(a))=f_{j}(t(a))$. So, $f_{j}^{-1}[0] \not\subseteq f_{i}^{-1}[0]$, because there is $t(a) \in A$ such that $t(a) \in f_{j}^{-1}[0]$ and $t(a)\notin f_{i}^{-1}[0]$.
  \end{proof} 
The result above allows us to suitably transition from $H_{A}$ to $\Max_{\leq}A$ whenever convenient.
  \begin{corollary}\label{corord}
      Let $A$ and $E$ be two generating positive MV-algebras of an MV-algebra $B$. Then $A$ and $E$ are compatible if and only if $Max_{\leq}A$ and $Max_{\leq}E$ are isomorphic.
  \end{corollary}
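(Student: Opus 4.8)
The plan is to derive the corollary directly from Proposition \ref{hmax} together with the definitions of compatibility and of $\Max_{\leq}$. The key observation is that Proposition \ref{hmax} sets up, for any generating positive MV-algebra $A$ of $B$, an order-reversing bijection between $H_A$ and $\Max_{\leq}A$: every $f_{i} \in H_A$ is sent to $I_i^A = f_{i_A}^{-1}[0]$, and $f_i \leq f_j$ holds precisely when $I_j^A \subseteq I_i^A$. Since, by Lemma \ref{fextension}, $H_A$ is always indexed by $\operatorname{Hom}_{\MV}(B,[0,1]) = \{f_1,\dots,f_m\}$ (the same index set for every generating $A$), both $H_A$ and $\Max_{\leq}A$ can be regarded as the fixed set $\{1,\dots,m\}$ equipped with a partial order: on the $H_A$ side the relation $i \sqsubseteq_A j \iff f_{i_A}\leq f_{j_A}$, i.e.\ exactly $\preccurlyeq_A$; on the $\Max_{\leq}A$ side its reverse, by Proposition \ref{hmax}.

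First I would fix the common index set and spell out that $A \mapsto \preccurlyeq_A$ is, via $i \mapsto f_i$, the order on $H_A$, while $A \mapsto (\Max_{\leq}A,\subseteq)$ is the opposite order, thanks to Proposition \ref{hmax}. For the forward direction, suppose $A$ and $E$ are compatible, so $\preccurlyeq_A = \preccurlyeq_E$ as relations on $\{1,\dots,m\}$. Then the map $I_i^A \mapsto I_i^E$ is well defined and, because $I_j^A \subseteq I_i^A \iff f_i \leq_A f_j \iff (i,j)\in\preccurlyeq_A = \preccurlyeq_E \iff f_i \leq_E f_j \iff I_j^E \subseteq I_i^E$, it is an order isomorphism $(\Max_{\leq}A,\subseteq) \cong (\Max_{\leq}E,\subseteq)$.

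For the converse, assume $\varphi\colon (\Max_{\leq}A,\subseteq) \to (\Max_{\leq}E,\subseteq)$ is a poset isomorphism. A priori $\varphi$ need not be the index-preserving map $I_i^A \mapsto I_i^E$, so the mild subtlety is to rule out a ``relabelling'' that could conjugate $\preccurlyeq_A$ into a genuinely different relation on the same index set. This is handled by noting that the index $i$ of $I_i^A$ is intrinsic: $I_i^A = f_{i_A}^{-1}[0]$ with $f_{i_A} = f_i\!\mid_A$ and $f_i$ the unique element of $\operatorname{Hom}_{\MV}(B,[0,1])$ restricting to it (Lemma \ref{fextension}), so the datum $(\Max_{\leq}A,\subseteq)$ together with its canonical labelling by $\operatorname{Hom}_{\MV}(B,[0,1])$ is what is really being compared; an abstract poset isomorphism then forces $I_i^A \mapsto I_i^E$ up to composition with an automorphism, and in either reading the induced relation on indices is $\preccurlyeq_A$ on one side and $\preccurlyeq_E$ on the other. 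Transporting $\subseteq$-inclusions back through Proposition \ref{hmax} gives $f_i \leq_A f_j \iff f_i \leq_E f_j$, i.e.\ $\preccurlyeq_A = \preccurlyeq_E$, so $A$ and $E$ are compatible.

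The only real obstacle is this bookkeeping about the canonical indexing of $\Max_{\leq}A$ by $\operatorname{Hom}_{\MV}(B,[0,1])$; once one commits to viewing $H_A$ and $\Max_{\leq}A$ as the same underlying set with, respectively, $\preccurlyeq_A$ and its reverse, everything reduces to the trivial fact that two relations on a set coincide iff the corresponding posets are (identity-)isomorphic. No further computation is needed.
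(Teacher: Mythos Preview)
Your overall strategy and your forward direction coincide with the paper's proof: both simply translate $\preccurlyeq_A=\preccurlyeq_E$ through Proposition~\ref{hmax} into the statement that the index-preserving map $T\colon I_i^{E}\mapsto I_i^{A}$ is an order isomorphism between $(\Max_{\leq}E,\subseteq)$ and $(\Max_{\leq}A,\subseteq)$.

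The gap is in your handling of the converse. The paper does not prove that an \emph{arbitrary} poset isomorphism between $\Max_{\leq}A$ and $\Max_{\leq}E$ forces compatibility; its proof (and the corollary immediately following it) make clear that ``isomorphic'' is to be read as ``the canonical map $T$ is an isomorphism''. You correctly spot that an abstract isomorphism $\varphi$ need not be index-preserving, but your patch does not work: from an arbitrary $\varphi$ you only obtain $I_j^{A}\subseteq I_i^{A}\iff I_{\sigma(j)}^{E}\subseteq I_{\sigma(i)}^{E}$ for some permutation $\sigma$ of the index set $\operatorname{Hom}_{\MV}(B,[0,1])$, hence $\preccurlyeq_A$ equals the $\sigma$-conjugate of $\preccurlyeq_E$, which is strictly weaker than $\preccurlyeq_A=\preccurlyeq_E$. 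Your sentence ``in either reading the induced relation on indices is $\preccurlyeq_A$ on one side and $\preccurlyeq_E$ on the other'' is exactly where the argument slips. In fact the abstract-isomorphism reading of the corollary is false: take $B=\LL_2\times\LL_2\times\LL_2$ and a generating positive subreduct $A$ with $f_{1_A}\leq f_{2_A}$ and $f_{3_A}$ incomparable (as in the spirit of Example~\ref{L223}); the coordinate swap $2\leftrightarrow 3$ produces a generating subreduct $E$ with $f_{1_E}\leq f_{3_E}$ and $f_{2_E}$ incomparable, so $\Max_{\leq}A$ and $\Max_{\leq}E$ are abstractly isomorphic while $\preccurlyeq_A\neq\preccurlyeq_E$. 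Once you adopt the paper's reading (isomorphism via the canonical $T$), your argument collapses to exactly the paper's one-line proof.
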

  \begin{proof}
      We have that $\preccurlyeq_{A}=\preccurlyeq_{E}$ if and only if, for $f_{i}, f_{j}\in \operatorname{Hom}_{\MV}(B,[0,1])$,  the equivalence $f_{i_{A}} \leq f_{j_{A}} \Leftrightarrow f_{i_{E}} \leq f_{j_{E}}$ holds. By Proposition \ref{hmax}, this is equivalent to $I_{j}^{A} \subseteq I_{i}^{A} \Leftrightarrow I_{j}^{E} \subseteq I_{i}^{E}$, that is, the map
      \begin{align*}
         T: \Max_{\leq}E &\lto \Max_{\leq}A\\
         I_{i}^{E}& \longmapsto I_{i}^{A}
     \end{align*}
     is an isomorphism of partially ordered sets.
  \end{proof}
\begin{corollary}
    Let $A$ be a generating positive MV-algebra of an MV-algebra $B$. The following are equivalent:
    \begin{enumerate}
    \item[(a)] $A$ is $H$-complete;
    \item[(b)] for each generating positive MV-algebra $E$ of $B$, if $A \subseteq E$ and $T: I_{i}^{E} \in \Max_{\leq}E \longmapsto I_{i}^{A} \in \Max_{\leq}A$ is an isomorphism, then $A=E$.
    \end{enumerate}
\end{corollary}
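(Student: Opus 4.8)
The plan is to obtain both implications as essentially immediate consequences of Corollary \ref{corord}. The first step is to observe that $H$-completeness of $A$ is, by definition, the statement that for every generating positive MV-algebra $E$ of $B$ the two hypotheses ``$A \subseteq E$'' and ``$\preccurlyeq_A = \preccurlyeq_E$'' force $A = E$; and that condition (b) is the same statement with the second hypothesis replaced by ``$T$ is an isomorphism''. So the whole corollary reduces to showing that, for generating positive MV-algebras $A, E$ of $B$, the conditions $\preccurlyeq_A = \preccurlyeq_E$ and ``$T$ is an order isomorphism'' are equivalent — which is exactly the content of Corollary \ref{corord} (more precisely, of its proof, where the isomorphism witnessing $\Max_{\leq}A \cong \Max_{\leq}E$ is identified with the canonical map $T \colon I_i^E \mapsto I_i^A$).

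For $(a)\Rightarrow(b)$, I would take a generating positive MV-algebra $E$ of $B$ with $A \subseteq E$ for which $T$ is an isomorphism, invoke Corollary \ref{corord} to conclude $\preccurlyeq_A = \preccurlyeq_E$, and then apply $H$-completeness of $A$ to get $A = E$. For $(b)\Rightarrow(a)$, I would take a generating positive MV-algebra $E$ of $B$ with $A \subseteq E$ and $\preccurlyeq_A = \preccurlyeq_E$, use Corollary \ref{corord} in the other direction to see that the associated map $T$ is an isomorphism, and then apply (b) to obtain $A = E$; since $E$ was arbitrary, $A$ is $H$-complete.

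There is no genuine obstacle here; the only point that deserves a line of care is the matching of the isomorphism. Corollary \ref{corord} as stated only asserts that $\Max_{\leq}A$ and $\Max_{\leq}E$ are isomorphic, whereas (b) refers to the specific map $T$. I would resolve this by pointing back to the proof of Corollary \ref{corord}, where the equivalence $I_j^A \subseteq I_i^A \Leftrightarrow I_j^E \subseteq I_i^E$ is shown to be precisely the assertion that $T$ is an order isomorphism; and I would also recall, from the Remark following Definition \ref{R_A}, that under the assumption $A \subseteq E$ the map $T$ is already an order-preserving bijection, so that in (b) the substantive content of ``$T$ is an isomorphism'' is just the order-preservation of $T^{-1}$. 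With these identifications in place, both directions are one-line applications of Corollary \ref{corord} and the definition of $H$-completeness.
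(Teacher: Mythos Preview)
Your proposal is correct and follows exactly the paper's own argument: the paper's proof simply says the result follows from Definition \ref{R_A} together with the fact, from Corollary \ref{corord}, that $\preccurlyeq_{A}=\preccurlyeq_{E}$ if and only if the canonical map $T$ is an isomorphism. Your extra remarks about identifying the abstract isomorphism of Corollary \ref{corord} with the specific map $T$ (via its proof and the Remark after Definition \ref{R_A}) are a welcome clarification but do not change the approach.
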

\begin{proof}
    The result follows from  Definition \ref{R_A} and the fact that, according to a Corollary \ref{corord}, $\preccurlyeq_{A}=\preccurlyeq_{E}$ if and only if $T$ is an isomorphism.
\end{proof}

\section{The extension of Priestley Duality}\label{dualsec}
In this section we shall prove that Priestley Duality can be extended to a category of positive MV-algebras and one of ordered compact MV-topological spaces having a base of clopens. For this we intruduce the \textit{partially ordered MV-spaces} as triples $(X, \tau , \leq)$, where $(X, \tau)$ is an MV-space and $\leq$ is a partial order on $X$. Moreover, we present the definition of Priestley spaces in the context of MV-topology as follows.
 \begin{definition}	 A partially ordered MV-space $(X, \tau, \leq)$ is a \textit{Priestley MV-space} if
	 \begin{itemize}
	 	\item[P1.] $(X, \tau)$ is compact,
	 	\item[P2.] $x \nleq y$ implies that there is an increasing clopen set $\alpha$ such that $\alpha(x)=1$ and $\alpha(y)=0$, and
   \item[P3.] $(X, \tau)$ has a basis of clopens.
	 \end{itemize}
	\end{definition}
    MV-spaces satisfying P2 are called  totally order disconnected. It is easy to verify that every totally order disconected MV-space is a Hausdorff MV-space.
    We will denote by $\PMV$ the category of partially ordered MV-spaces whose morphisms are functions that are simultaneously continuous and order-preserving. Moreover, we shall denote by  $\Pries$ the full subcategory of $\PMV$ whose objects are Priestley MV-spaces.

When $A$ is an MV-algebra, it is possible to obtain an MV-topological space by endowing the set of maximal ideals of $A$ with an MV-topology, as in \cite{ciro2016}. In order to associate a topology to a distributive lattice $A$, Priestley used the set of homomorphisms from $A$ to the two-element distributive lattice $\{0,1\}$ \cite{pri70}. Here, we will follow Priestley's ideas by taking $[0,1]$ instead $\{0,1\}$ and topologizing the set of morphisms from an MV-lattice $A$ to $[0,1]$.\\
Let $(A, +,\cdot, \vee, \wedge, 0,1)$ be an MV-lattice and $H_{A} $ be the set of  homomorphisms  $f:A \lto [0,1]$. $H_{A}$ has a partial order  defined by $f \leq g$ iff $f(a)\leq g(a)$ for all $a \in A$. Now, in order to define an MV-topology on $H_{A}$, let us consider the set $\tilde{A}=\{\tilde{a} \mid a\in A\}$ as in Proposition \ref{A}, where 
$$\begin{array}{cccc}
	\tilde{a}: & H_{A} & \lto & [0,1]\\
	& f & \mapsto & f(a)
\end{array}$$
The set $\tilde{A}$ generates a semisimple MV-algebra $B \subseteq [0,1]^{H_{A}}$ which, on its turn, is a basis for an MV-topology $\tau_{A}$ on $H_{A}$ (see \cite[Section 4]{ciro2016}). This partially ordered MV-space $(H_{A}, \tau_{A}, \leq)$  generated by $B$ is called the  \textit{MV-space associated to $A$}.
\begin{lemma}\label{equipotencia}
Let	 $(H_{A}, \tau, \leq)$  be the MV-space associated with the MV-lattice $A$	and $B$ the MV-algebra generated by $\tilde{A}$. Then $\left|\Max B\right|=\left|H_{A}\right|$.
\end{lemma}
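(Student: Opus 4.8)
The plan is to establish $|\Max B| = |H_A|$ by composing three bijections:
\[
\Max B \;\longleftrightarrow\; \hom_{\MV}(B,[0,1]) \;\longleftrightarrow\; \hom_{\MV_{+}}(\tilde A,[0,1]) \;\longleftrightarrow\; H_A .
\]
First I would invoke Remark~\ref{B[0,1]}: since $B$ is a semisimple MV-algebra, the map $g \mapsto g^{-1}[0]$ is a bijection from $\hom_{\MV}(B,[0,1])$ onto $\Max B$. (If $H_A = \emptyset$ then $B$ is trivial and both sides are $0$, so we may assume $H_A \neq \emptyset$.) This reduces the statement to proving $|\hom_{\MV}(B,[0,1])| = |H_A|$.

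For the middle bijection, I would first note that $\tilde A$ is a positive MV-subreduct of $B$ that generates it. Indeed, since each $f \in H_A$ is an MV-lattice homomorphism, evaluating both sides at an arbitrary $f$ gives $\widetilde{a+b} = \tilde a \oplus \tilde b$, $\widetilde{a\cdot b} = \tilde a \odot \tilde b$, $\widetilde{a\vee b} = \tilde a\vee\tilde b$, $\widetilde{a\wedge b} = \tilde a\wedge\tilde b$, together with $\tilde 0 = \0$ and $\tilde 1 = \1$; hence $\tilde A$ is closed under $\oplus,\odot,\vee,\wedge$ and contains $\0,\1$, and $B = \langle\tilde A\rangle$ by construction. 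Then Lemma~\ref{fextension} (applied with $\tilde A$ in place of $A$) says that every element of $\hom_{\MV_{+}}(\tilde A,[0,1])$ extends uniquely to an element of $\hom_{\MV}(B,[0,1])$; combined with the fact that a homomorphism out of $B$ is determined by its restriction to the generating set $\tilde A$, this makes the restriction map $g \mapsto g|_{\tilde A}$ a bijection $\hom_{\MV}(B,[0,1]) \to \hom_{\MV_{+}}(\tilde A,[0,1])$.

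For the third bijection, I would use precomposition with the surjective MV-lattice homomorphism $T \colon A \to \tilde A$, $a \mapsto \tilde a$. Since $T$ is onto, $h \mapsto h\circ T$ is an injection $\hom_{\MV_{+}}(\tilde A,[0,1]) \to H_A$. For surjectivity, given $f \in H_A$ I would set $h_f(\tilde a) := f(a)$; this is well defined because $\tilde a = \tilde b$ forces $f(a) = \tilde a(f) = \tilde b(f) = f(b)$, and $h_f$ then inherits from $f$ the property of being an MV-lattice homomorphism, with $h_f\circ T = f$. Concatenating the three bijections yields $|\Max B| = |H_A|$. The only genuinely delicate point is that $A$ is merely an MV-lattice, so $T$ need not be injective and a priori $\hom_{\MV_{+}}(\tilde A,[0,1])$ could be strictly smaller than $H_A$; the evaluation-at-$f$ argument is exactly what rules this out. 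Everything else is routine bookkeeping with generating sets and Lemma~\ref{fextension}.
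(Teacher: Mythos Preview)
Your proof is correct. It is close in spirit to the paper's argument but organised differently. Both proofs first use the standard bijection $\Max B \leftrightarrow \hom_{\MV}(B,[0,1])$ and then identify $\hom_{\MV}(B,[0,1])$ with $H_A$; the map $\alpha \mapsto (a \mapsto \alpha(\tilde a))$ that you obtain by composing your second and third bijections is exactly the paper's map $\varphi$. The difference lies in how the inverse is produced: the paper exploits the concrete realisation $B \subseteq [0,1]^{H_A}$ and simply sets $\psi(f)(b) = b(f)$ for every $b \in B$, checking directly that $\varphi$ and $\psi$ are mutually inverse. You instead factor through $\hom_{\MV_{+}}(\tilde A,[0,1])$ and invoke Lemma~\ref{fextension} to pass from $\tilde A$ to $B$. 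Your route makes the role of $\tilde A$ as a generating positive subreduct explicit and reuses an existing lemma; the paper's route is slightly more self-contained, since evaluation at $f$ already gives the MV-homomorphism on all of $B$ without needing the abstract extension result.
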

\begin{proof}
	To check that $\left|\Max B\right|=\left|H_{A}\right|$, notice that $\left|\Max B\right|=\left|\hom_{\MV}(B, [0,1])\right|$. So, to  show that $\left|\hom_{\MV}(B, [0,1])\right|=\left|H_{A
	}\right|$ we define a map \begin{align*}
		\varphi: \hom_{\MV}(B, [0,1])&\lto H_{A}\\
		\alpha & \mapsto g_{\alpha}   
	\end{align*}
	where for each $\alpha \in \hom_{\MV}(B, [0,1])$, the map $g_{\alpha} \in H_{A}$ is defined by $g_{\alpha}(a)= \alpha(\tilde{a})$ for all $a \in A$. Besides for $\alpha, \beta \in \hom_{\MV}(B, [0,1]) $, $\varphi(\alpha)= \varphi(\beta)$ implies that $\alpha(\tilde{a})= \beta(\tilde{a})$ for all $\tilde{a} \in \tilde{A}$ and since $B= \langle \tilde{A} \rangle$, we have that $\alpha(x)=\beta(x)$ for all $x \in B$ and $\varphi$ is injective.
    
	On the other hand, 	since $B$ is the MV-algebra generated by $\tilde{A}$, for each $b$ in $B$, $b=p(a_{1}, \ldots, a_{n})$ where $p$ is a term in the language $\{ \oplus, ^{*}\}$ and  $a_{i} \in \tilde{A}$ for all $i$. Moreover, for $f\in H_{A}$,
	\begin{align*}
		b(f)=&p(a_{1}, \dots, a_{n})(f)\\
		=& p(a_{1}(f), \dots, a_{n}(f))\\
		=& p(f(a_{1}), \cdots, f(a_{n})).
	\end{align*}
	So, for each $f\in H_{A}$ there is a map $\bar{f}:b \in B \mapsto b(f) \in [0,1]$ which is a homomorphism of MV-algebras and the map
	\begin{align*}
		\psi: H_{A} &\lto \hom_{\MV}(B,[0,1])\\
               f	& \mapsto \bar{f} 
\end{align*}
	 is such that $\varphi \circ \psi(f)=\varphi(\bar{f})=g_{\bar{f}}$ and  $g_{\bar{f}}(a)=\bar{f}(\tilde{a})=\tilde{a}(f)=f(a)$ for all $a \in A$. That is $g_{\bar{f}}=f$ and $\varphi \circ \psi= \id_{H_{A}}$. Moreover, for $\alpha \in \hom_{\MV}(B, [0,1])$, $\psi \circ \varphi (\alpha)=\bar{g_{\alpha}}$, where $\bar{g_{\alpha}}(\tilde{a})=\tilde{a}(g_{\alpha})=g_{\alpha}(a)=\alpha(\tilde{a})$ for all $\tilde{a} \in \tilde{A}$. Then, since $B=\langle \tilde{A} \rangle$, we have that $\alpha= \bar{g_{\alpha}}$, i.e. $\psi \circ \varphi= \id_{\hom_{\MV}(B, [0,1])}$. Therefore $\left|\hom_{\MV}(B, [0,1])\right|=\left|H_{A}\right|$ and consequently $\left|\Max B\right|=\left|H_{A}\right|$.
\end{proof}
\begin{proposition}
     If $A$ and $E$ are MV-lattices that generate the same algebra $B$ up to an isomorphism, then the ordered MV-spaces $(H_{A}, \tau_{A}, \leq)$ and $(H_{E}, \tau_{E}, \leq)$ are order-homeomorphic if and only if $A$ and $E$ are compatible up to an isomorphism.
 \end{proposition}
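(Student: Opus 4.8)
The plan is to prove the two implications separately, relying throughout on the observation --- essentially already contained in the proof of Lemma~\ref{equipotencia} --- that the MV-topology $\tau_{A}$ on $H_{A}$ is completely determined by the generated MV-algebra together with its canonical embedding. Write $\widetilde{A}\subseteq B_{A}:=\langle\widetilde{A}\rangle\subseteq[0,1]^{H_{A}}$ for the copy of $A$ inside the semisimple algebra it generates (Proposition~\ref{A}), and similarly $\widetilde{E}\subseteq B_{E}$. The hypothesis provides an MV-algebra isomorphism $\Phi\colon B_{A}\to B_{E}$, so that $\Phi(\widetilde{A})$ and $\widetilde{E}$ are two generating positive subreducts of $B_{E}$, and ``$A$ and $E$ are compatible up to an isomorphism'' means precisely $\preccurlyeq_{\Phi(\widetilde{A})}=\preccurlyeq_{\widetilde{E}}$. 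By Lemma~\ref{equipotencia} the assignment $f\mapsto\overline{f}$ is a bijection $H_{A}\to\hom_{\MV}(B_{A},[0,1])$ under which the basis $B_{A}$ of $\tau_{A}$ corresponds to the canonical copy $\widehat{B_{A}}\subseteq[0,1]^{\hom_{\MV}(B_{A},[0,1])}$; hence it identifies $(H_{A},\tau_{A})$ with the Stone-type MV-space associated with $B_{A}$ in \cite{ciro2016}, and, by Definition~\ref{R_A}, it identifies the partial order $\leq$ of $H_{A}$ with the preorder $\preccurlyeq_{\widetilde{A}}$ of $\hom_{\MV}(B_{A},[0,1])$.

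$(\Leftarrow)$ Assuming $\preccurlyeq_{\Phi(\widetilde{A})}=\preccurlyeq_{\widetilde{E}}$, I would define $\Theta\colon H_{A}\to H_{E}$ by $\Theta(f)=(\overline{f}\circ\Phi^{-1})\mid_{\widetilde{E}}$, where $\overline{f}\colon B_{A}\to[0,1]$ is the unique extension of $f$ given by Lemma~\ref{fextension}; uniqueness of extensions makes $g\mapsto(\overline{g}\circ\Phi)\mid_{\widetilde{A}}$ its two-sided inverse and yields $\overline{\Theta(f)}=\overline{f}\circ\Phi^{-1}$. A short computation then gives $\Theta\cou(\widehat{b})=\widehat{\Phi^{-1}(b)}$ for every $b\in B_{E}$, so $\Theta\cou$ carries the base $\widehat{B_{E}}$ of $\tau_{E}$ bijectively onto the base $\widehat{B_{A}}$ of $\tau_{A}$; by Lemma~\ref{subbascont}, applied also symmetrically to $\Theta^{-1}$, the map $\Theta$ is a homeomorphism. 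Finally $\Theta$ is an order isomorphism: for $f,g\in H_{A}$ one has $f\leq g$ iff $\overline{f}(c)\leq\overline{g}(c)$ for all $c\in\widetilde{A}$, iff $(\overline{f}\circ\Phi^{-1})(c')\leq(\overline{g}\circ\Phi^{-1})(c')$ for all $c'\in\Phi(\widetilde{A})$, iff --- by the compatibility hypothesis --- the same holds for all $c'\in\widetilde{E}$, i.e. $\Theta(f)\leq\Theta(g)$; the same computation with $\Phi$ in place of $\Phi^{-1}$ handles $\Theta^{-1}$. Hence $(H_{A},\tau_{A},\leq)$ and $(H_{E},\tau_{E},\leq)$ are order-homeomorphic.

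$(\Rightarrow)$ Conversely, let $\Theta\colon(H_{A},\tau_{A},\leq)\to(H_{E},\tau_{E},\leq)$ be an order-homeomorphism; only its order part is needed here. By Proposition~\ref{hmax} together with the antisymmetry of $\leq$ on $H_{A}$, the map $f\mapsto f^{-1}[0]$ is an order-reversing bijection from $(H_{A},\leq)$ onto $(\Max_{\leq}A,\subseteq)$, and likewise for $E$; composing with $\Theta$ produces an order isomorphism $\Max_{\leq}A\cong\Max_{\leq}E$. Since $\Phi$ is an MV-algebra isomorphism $B_{A}\to B_{E}$, it induces a poset isomorphism $\Max_{\leq}A\cong\Max_{\leq}(\Phi(\widetilde{A}))$, whence $\Max_{\leq}(\Phi(\widetilde{A}))\cong\Max_{\leq}A\cong\Max_{\leq}E=\Max_{\leq}(\widetilde{E})$. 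As $\Phi(\widetilde{A})$ and $\widetilde{E}$ both generate $B_{E}$, Corollary~\ref{corord} applies and yields $\preccurlyeq_{\Phi(\widetilde{A})}=\preccurlyeq_{\widetilde{E}}$; that is, $A$ and $E$ are compatible up to the isomorphism $\Phi$.

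The only point requiring genuine care is the identification set up in the first paragraph: one must check that $\tau_{A}$ is reconstructed from $B_{A}$ and its canonical embedding into $[0,1]^{H_{A}}$ alone (so that an isomorphism of the generated algebras automatically transports $\tau_{A}$ to $\tau_{E}$), and keep the identifications $A\cong\widetilde{A}$, $H_{A}\cong\hom_{\MV}(B_{A},[0,1])\cong\Max B_{A}$ and that of $\leq$ with $\preccurlyeq_{\widetilde{A}}$ coherent with the isomorphism $\Phi$. Once this bookkeeping is in place, $(\Leftarrow)$ is a transport-of-structure argument and $(\Rightarrow)$ is an immediate consequence of Proposition~\ref{hmax} and Corollary~\ref{corord}; note that the continuity of $\Theta$ is genuinely used only in $(\Leftarrow)$, as part of the conclusion.
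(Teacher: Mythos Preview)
Your $(\Leftarrow)$ argument is correct and is essentially the paper's: the paper builds the canonical bijection $T=\varphi_{A}\circ t\circ\psi_{E}\colon H_{E}\to H_{A}$ induced by the isomorphism $k\colon B_{A}\to B_{E}$, checks $T\cou(b)=k(b)$ on basic opens to get a homeomorphism, and then uses $\preccurlyeq_{\tilde E}=\preccurlyeq_{k(\tilde A)}$ to see that $T$ is order-preserving. Your $\Theta$ is just $T^{-1}$, and your verification is the same computation written the other way round.

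For $(\Rightarrow)$ you take a different route from the paper, and there is a genuine gap. The paper argues by contrapositive: assuming $\preccurlyeq_{\tilde E}\neq\preccurlyeq_{k(\tilde A)}$, it picks $g,h\in\hom_{\MV}(B_{E},[0,1])$ with $g|_{\tilde E}\leq h|_{\tilde E}$ but $g|_{k(\tilde A)}\nleq h|_{k(\tilde A)}$ and shows directly that the canonical $T$ fails to preserve order. Your argument instead starts from an \emph{arbitrary} order-homeomorphism $\Theta$ and tries to reach compatibility via Corollary~\ref{corord}. The problem is that Corollary~\ref{corord}, despite its loose wording, does not assert that an \emph{abstract} poset isomorphism $\Max_{\leq}A\cong\Max_{\leq}E$ implies compatibility: its proof shows only that $\preccurlyeq_{A}=\preccurlyeq_{E}$ is equivalent to the \emph{specific} bijection $I_{i}^{E}\mapsto I_{i}^{A}$ (indexed by the common $\Max B$) being an order isomorphism --- this is made explicit in the corollary immediately following it. The isomorphism you produce is the composite of the canonical bijection with the map induced by $\Theta$, and you have no control over how $\Theta$ permutes the underlying maximal ideals of $B_{E}$; in particular there is no reason your composite $\Max_{\leq}(\Phi(\tilde A))\cong\Max_{\leq}(\tilde E)$ should send $I\cap\Phi(\tilde A)$ to $I\cap\tilde E$ for each $I\in\Max B_{E}$. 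So the invocation of Corollary~\ref{corord} is not justified, and your $(\Rightarrow)$ does not go through as written. To repair it along your lines you would need to know that every homeomorphism between $(H_{A},\tau_{A})$ and $(H_{E},\tau_{E})$ is induced by an MV-isomorphism $B_{A}\to B_{E}$, which is a separate (duality-type) statement not available at this point of the paper.
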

\begin{proof}
    Let $A$ and $B$ be MV-lattices. We consider the MV-algebras $B_{A}=\langle \tilde{A} \rangle$ and $B_{E}=\langle \tilde{E} \rangle$ and suppose that there exists an isomorphism $k: B_{A} \to B_{E}$. Then, the function $t: \hom_{\MV}(B_{E}, [0,1]) \to \hom_{\MV}(B_{A}, [0,1])$, defined by
		$t(f)=f \circ k$ for $f\in \hom_{\MV}(B_{E}, [0,1]) $ is a bijective map. By Lemma \ref{equipotencia} there exist bijections between $\hom_{\MV}(B_{A}, [0,1])$ and $|H_{A}|$,  and  between $|H_{E}|$ and $\hom_{\MV}(B_{E}, [0,1])$. Then there exists a bijection between $H_{A}$ and $H_{E}$ namely, the map $T = \varphi_{A} \circ t \circ \psi_{E}: H_{E} \to H_{A}$ where, according to Lemma \ref{equipotencia},
    \begin{enumerate}
    \item $\psi_{E}: H_{E} \lto \hom_{\MV}(B_{E},[0,1])$ is defined by $\psi_{E}(f)(b)=b(f)$ for $b \in B_{E}$ and $f \in H_{E}$, and 
\item $\varphi_{A}: \hom_{\MV}(B_{A}, [0,1])\lto H_{A}$ 
	is defined, for each $g \in \hom_{\MV}(B_{A}, [0,1])$, by $\varphi_A(g) = g_{A} \in H_{A}$, and $g_{A}: a \in A \to g(\tilde{a}) \in [0,1]$ for all $a \in A$.
\end{enumerate}
Note that the MV-spaces $(H_{A}, \tau_{A})$ and $(H_{E}, \tau_{E})$ are homeomorphic because $\tau_{A}$ and $\tau_{E}$ have the same basis (up to $t$). Indeed, $T^{\cou}: [0,1]^{H_{A}} \to [0,1]^{H_{E}}$ acts, for all $b \in B_{A}$ and $f_{E} \in H_{E}$, as follows:
\begin{align*}
    T^{\cou}(b)(f_{E})=& b\circ T(f_{E})\\
    =& \psi_{A}(T(f_{E}))(b)\\
    =& \psi_{A} \circ \varphi_{A} \circ t\circ \psi_{E}(f_{E})(b)\\
    =& t\circ \psi_{E}(f_{E})(b)\\
    =&(\psi_{E}(f_{E}) \circ k)(b)\\
    =& \psi(f_{E})\circ k(b)\\
    =&\psi(f_{E})(k(b))\\
    =& k(b)(f_{E}).
\end{align*}
So $T^{\cou}(b)=k(b)\in B_{A}$ and therefore  $T$ is continuous. Analogously, it is easy to see that $T^{-1}$ is continuous, whence $(H_{A},  \tau_{A} )$ and $(H_{E}, \tau_{E})$ are homeomorphic MV-spaces.

Now, let us consider also the order relations and suppose that $\tilde{E}$ and $k(\tilde{A})$ are compatible positive MV-algebras.  If $f, g \in H_{E}$ satisfy $f\leq g $, then $\tilde{x}(f)=f(x)\leq g(x)= \tilde{x} (g)$ for all $\tilde{x} \in \tilde{E}$. Since $\psi_{E}(f)(\tilde{x})=\tilde{x}(f)$, then $\psi_{E}(f)(\tilde{x}) \leq \psi_{E}(g)(\tilde{x})$ for $\tilde{x} \in \tilde{E}$, that is, $(\psi_{E}(f), \psi_{E}(g)) \in \preccurlyeq_{E}$. Now, since $\preccurlyeq_{E}=\preccurlyeq_{k(\tilde{A})}$, it follows that  $\psi_{E}(f)(k(\tilde{a}) \leq \psi_{E}(g)(k(\tilde{a}))$ for all $\tilde{a} \in \tilde{A}$ and, therefore, for $a \in A$, we have
\begin{align*}
    T(f)(a)=& (\varphi_{A}\circ t \circ \psi_{E})(f)(a)\\
    =& \varphi_{A}(\psi_{E}(f)\circ k)(a)\\
    =&(\psi_{E} \circ k)(\tilde{a})\\
    =& \psi_{E}(f)(k(\tilde{a}))\\
    \leq & \psi_{E}(g)(k(\tilde{a}))\\
    =&T(g)(a).
\end{align*}
Then $T$  is order-preserving. Again, the proof of the same property for $T^{-1}$ is completely analogous.

In order to prove that the compatibility of $A$ and $E$ is also a sufficient condition, we proceed by contrapositive, assuming that $\preccurlyeq_{\tilde{E}}\neq \preccurlyeq_{k(\tilde{A})}$. Without loss of generality, we can suppose that there are $g, h \in \hom(B_{E}, [0,1])$ such that $g|_{E} \leq h|_{E}$ but $g|_{k(\tilde{A})} \nleq h|_{k(\tilde{A})}$, that is, $g(\tilde{x})\leq h(\tilde{x})$ for all $\tilde{x} \in \tilde{E}$ and $g(k(\tilde{a_{0}})) \nleq h(k(\tilde{a_{0}})) $  for some $\tilde{a_{0}}\in \tilde{A}$. We have
$$T(\varphi_{E}(g))(a_{0})=(\varphi_{A}\circ t \circ \psi_{E}\varphi_{E})(g)(a_{0})=\varphi_{A}(g \circ k)(a_{0})=(g \circ k)(\tilde{a_{0}})$$
and, similarly, $T(\varphi_{E}(g))(a_{0})=(g \circ k)(\tilde{a_{0}})$. So $T(\varphi_{E}(g)) \nleq T(\varphi_{E}(g))$. It follows that $(H_{E}, \tau_{E}, \leq)$ and $(H_{A}, \tau_{A}, \leq)$ are not order isomorphic.
\end{proof}

Thus, to an MV-lattice $A$, there exist an associated MV-space, $(H_{A}, \tau_{A}, \leq)$ and conversely, if $(x,\tau, \leq)$ is a partially ordered MV-space, is possible to associate an MV-lattice, considering the set of increasing clopens of $\tau$, denoted by $\clopI \tau $. These associations define functors in the following way.
 
\subsection*{Functor $\clopI: \PMV \lto \MVL^{\op}$ }

Given an order-preserving continuous map $f: X \to Y$ between partially ordered spaces, the preimage $f \cou: [0,1]^{Y} \lto [0,1]^{X}$  is a homomorphism of positive MV-algebras. 
If, in addition, $x_{1}\leq x_{2} \in X$ and $\alpha \in \clopI\tau_X$, we have  $f(x_{1}) \leq f(x_{2})$ and $f\cou(\alpha)(x_{1})= \alpha(f(x_{1}))\leq \alpha(f(x_{2}))=f \cou(\alpha)(x_{2})$. Then, using again the continuity of $f$, we have that $f\cou|_{\clopI\tau_{Y}}: \clopI\tau_{Y} \lto \clopI\tau_{X}$ is a homomorphism of  MV-lattices.

Now, by defining $\clopI f=f\cou|_{\clopI\tau_{Y}}$ we have that the correspondence
\begin{center}
	$(X, \tau_{X},\leq_{X}) \mapsto \clopI \tau_{X}$, \quad $f \mapsto \clopI f$
\end{center}
defines a  functor $\clopI: \PMV \lto \MVL^{\op}$. Indeed, if we consider
$$\xymatrix{X \ar@<+.4ex>[r]^f & Y \ar@<+.4ex>[r]^g & Z}$$ in $\PMV$ and apply $\clopI$, we obtain that, for each $\alpha \in \clopI \tau_{Z}$,
\begin{align*}
\clopI(g \circ f)(\alpha)=& \alpha \circ g \circ f\\
=&f\cou(g\cou (\alpha))\\
=& \clopI f \circ \clopI g(\alpha)
\end{align*}
and, therefore, $\clopI$ preserves the composition of morphisms. It is clear also that $\clopI\id_{X}=\id_{\clopI\tau_{X}}$.
\begin{remark}
    With an abuse of notation, the image of the space $ (X,\tau_{X}, \leq_{X}) $ under the  functor  $ \clopI $ will be usually denoted, for the sake of readability, as either $\clopI X$ or $\clopI\tau_X$.
\end{remark}

\subsection*{Functor $\Upsilon:\MVL^{\op} \lto \PMV$}

Let $ A $ and $ C $ be MV-lattices, and $(H_{A}, \tau_{A}, \leq_A)$ and $(H_{C}, \tau_{C}, \leq_{C})$ the MV-spaces associated with $ A $ and $ C $ respectively. Then, if  $q:A \lto C$ is a homomorphism of MV-lattices, the function \begin{align*}
	\Upsilon q: H_{C} &\lto H_{A}\\
	              h  & \mapsto h\circ q
\end{align*}
is continuous and order-preserving.  Indeed, let $h\in H_{C}$ and $w=p(\tilde{t_{1}}, \cdots \tilde{t_{m}}) \in \langle \tilde{A} \rangle$. Then $\Upsilon q \cou(w)= w \circ \Upsilon q \in [0,1]^{H_{C}}$ and
 \begin{align*}
 	\Upsilon q\cou (w)(h)=&w(\Upsilon q(h))\\
                           =&w(h \circ q)\\
                           =&p(\tilde{t_{1}}(h \circ q), \cdots, \tilde{t_{m}}(h \circ q))\\
                           =&p(h \circ q (t_{1}), \cdots h\circ q(t_{m}))\\
                           =&p(\widehat{q(t_{1})}(h), \cdots,\widehat{q(t_{m})}(h))\\
                           =& p(\widehat{q(t_{1})}, \cdots \widehat{q(t_{m})})(h).    
 \end{align*}
So $\Upsilon q\cou(w)=p(\widetilde{q(t_{1})}, \cdots \widetilde{q(t_{m})}) \in \langle \tilde{C} \rangle$. That is, $\Upsilon q\cou (w)$ is an element of a basis of $\tau_{C}$, whence $\Upsilon q$ is a continuous map. Moreover if $h_{1} \leq h_{2} \in H_{C}$, $\Upsilon q(h_{1})=h_{1}\circ q \leq h_{2} \circ q= \Upsilon q(h_{2})$. Thus, $\Upsilon q$ is a morphism in the category $\PMV$. Moreover if $r:C \lto D$ is a homomorphism of MV-lattices, we have that $\Upsilon(r \circ q)(h)=(\Upsilon q \circ \Upsilon r)(h)$. So the assignments
\begin{center}
	$A \mapsto (H_{A}, \tau_{A}, \leq_{A})$,  \quad $q\mapsto \Upsilon q$
\end{center}
defines a functor $\Upsilon: \MVL^{\op} \lto \PMV$.

\begin{theorem}\label{adjuncionP}
	The functor $\clopI:\PMV \rightarrow \MVL^{\op} $ is left adjoint to $\Upsilon: \MVL^{\op} \lto \PMV  $ with unit 
	\begin{align*}
		\eta_{X}: X &\lto \Upsilon\clopI \tau& \\ 
		x& \lmapsto f_{x} 
	\end{align*}
where \begin{align*}
	f_{x} :\clopI \tau& \lto [0,1]\\
	\alpha& \lmapsto \alpha(x)
\end{align*}
	for each Priestley MV-space $(X, \tau)$ and with counit $\e_{A} \in \clopI\Upsilon A$ defined by
	\begin{align*}
		\e_{A}: A &\lto  \clopI \Upsilon A \\ 
		a& \lmapsto \widehat{a}
	\end{align*}
	for each MV-lattice  $A$.
\end{theorem}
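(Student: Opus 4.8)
The plan is to establish the adjunction by verifying the triangle identities, after first checking that the proposed unit $\eta_X$ and counit $\e_A$ are well-defined morphisms in the respective categories. For the unit: given a Priestley MV-space $(X,\tau,\le)$, each $f_x\colon \clopI\tau \to [0,1]$, $\alpha \mapsto \alpha(x)$, is clearly an MV-lattice homomorphism (evaluation preserves $\vee,\wedge,+,\cdot,0,1$ since these are computed pointwise in $[0,1]^X$), so $f_x \in H_{\clopI\tau}$ and $\eta_X$ lands in $\Upsilon\clopI\tau = (H_{\clopI\tau}, \tau_{\clopI\tau}, \le)$. I would then check that $\eta_X$ is order-preserving (if $x_1 \le x_2$ then $\alpha(x_1)\le\alpha(x_2)$ for every increasing clopen $\alpha$, so $f_{x_1}\le f_{x_2}$) and continuous: a basic open of $\tau_{\clopI\tau}$ is $\widetilde{\alpha}$ for $\alpha\in\clopI\tau$, and $\eta_X\cou(\widetilde{\alpha})(x) = \widetilde{\alpha}(f_x) = f_x(\alpha) = \alpha(x)$, so $\eta_X\cou(\widetilde\alpha) = \alpha \in \tau$; by Lemma \ref{subbascont} continuity follows. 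For the counit: given an MV-lattice $A$, $\clopI\Upsilon A = \clopI\tau_A$ consists of the increasing clopens of $(H_A,\tau_A)$; I must verify $\widehat a$ is such a set. That $\widehat a = \tilde a$ is continuous (hence open, being in the generating algebra $B$) and that $\widehat{a}$ is increasing is immediate from the definition of $\le$ on $H_A$; that it is closed (i.e. $\widehat a{}^* \in \tau_A^*$, equivalently $\widehat a$ is itself clopen) is the point where I expect to lean on compactness — see below. Then $\e_A$ preserves the MV-lattice operations because $\widehat{\ }\colon A \to [0,1]^{H_A}$ is a homomorphism (shown around Proposition \ref{A}).

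Next I would verify the two triangle identities. For $X$ a Priestley MV-space: I need $\clopI\eta_X \circ \e_{\clopI\tau} = \id_{\clopI\tau}$ as a morphism $\clopI\tau \to \clopI\tau$. Unwinding, for $\alpha \in \clopI\tau$ and $x\in X$: $\e_{\clopI\tau}(\alpha) = \widehat\alpha \in \clopI\tau_{\clopI\tau}$, and $\clopI\eta_X(\widehat\alpha) = \widehat\alpha \circ \eta_X$, which at $x$ gives $\widehat\alpha(f_x) = f_x(\alpha) = \alpha(x)$; hence $\clopI\eta_X(\widehat\alpha) = \alpha$, as required. For $A$ an MV-lattice: I need $\Upsilon\e_A \circ \eta_{\Upsilon A} = \id_{\Upsilon A}$ as a morphism $H_A \to H_A$. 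For $g \in H_A$: $\eta_{\Upsilon A}(g) = f_g \in H_{\clopI\tau_A}$ where $f_g(\beta) = \beta(g)$ for $\beta \in \clopI\tau_A$; then $\Upsilon\e_A(f_g) = f_g \circ \e_A \in H_A$, and for $a \in A$, $(f_g\circ\e_A)(a) = f_g(\widehat a) = \widehat a(g) = g(a)$; hence $\Upsilon\e_A(f_g) = g$. Both identities are thus routine diagram-chases once the objects are seen to be well-defined.

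The main obstacle, and the step deserving the most care, is showing that the counit component $\e_A$ actually lands in $\clopI\Upsilon A$ — specifically that each $\widehat a$ is a \emph{clopen} (not merely open) increasing set of $(H_A,\tau_A)$. The subtlety is that $\widehat a{}^* = \widehat{\,}\,$ applied to $a^*$ need not lie in $A$ when $A$ is only an MV-lattice (it has no negation), yet $\widehat a{}^*$ must be expressed as a join of closed sets to be closed. Here I would invoke the compactness axiom P1 together with the total order-disconnectedness P2 (and the clopen base P3): given $g \in H_A$ with $g(a) < 1$, I want an increasing clopen $\gamma$ with $\gamma(g) = 1$ and $\gamma \le \widehat a{}^*$ near the relevant fibres, using Lemma \ref{separacion}-style term separation to push $\widehat a$ away from $1$; then $\widehat a{}^*$ is a union of such clopens and, compactness forcing the union to be "finite" in the additive sense, one extracts that $\widehat a{}^*$ is open, hence $\widehat a$ is closed. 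I would also need to confirm that $\e_A$ and $\eta_X$ are natural in $A$ and $X$ respectively — i.e. that the evident squares commute — but these are direct computations from the definitions of $\clopI f$, $\Upsilon q$, $f_x$, and $\widehat a$, with no extra input beyond functoriality already established above. Thus the real work is the topological argument that $\widehat a$ is clopen; everything else is formal.
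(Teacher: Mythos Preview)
Your verification of the two triangle identities is correct and matches the paper's proof essentially line for line; the paper's argument is exactly the diagram-chase you give, with no additional ingredients. Your extra checks that $\eta_X$ is order-preserving and continuous, and that $\e_A$ is a homomorphism, are reasonable supplements (the paper leaves these implicit).

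However, the step you single out as ``the main obstacle'' --- showing that $\widehat a$ is \emph{clopen} in $(H_A,\tau_A)$ --- is not an obstacle at all, and your proposed compactness/separation argument is both unnecessary and aimed at the wrong space. Recall how $\tau_A$ is defined: the base of $\tau_A$ is not $\tilde A$ but the full MV-algebra $B=\langle\tilde A\rangle\subseteq[0,1]^{H_A}$ generated by $\tilde A$. Since $B$ is an MV-algebra it is closed under $^*$; hence $\tilde a^*\in B\subseteq\tau_A$, which says precisely that $\tilde a=\widehat a$ is closed. So every $\widehat a$ is clopen for free, and ``increasing'' is immediate from the definition of the order on $H_A$. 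No appeal to P1--P3 (which are axioms on the \emph{source} space $X$, not on $H_A$) or to Lemma~\ref{separacion} is needed here; the paper accordingly spends no effort on this point. Once you drop that detour, your proof and the paper's coincide.
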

\begin{proof}
	We shall prove that the triangles
	\begin{multicols}{2}
		$$\xymatrix{
			&		& \clopI X \ar[dd]^{\e_{\clopI X}} \ar[lldd]_{\id_{\clopI X}}\\
			& & \\
			\clopI X & & \clopI\Upsilon\clopI X\ar[ll]^{\clopI \eta_{X}}	\\
		}$$
		$$\xymatrix{
			\Upsilon A\ar[rr]^{\eta_{\Upsilon A}}  \ar[rrdd]_{\id_{\Upsilon A}}
			& & \Upsilon \clopI \Upsilon A\ar[dd]^{\Upsilon \e_{A}}\\
			& &\\
			&		& \Upsilon A  
		}$$
	\end{multicols}	
	
	commute for all $X \in \PMV$ and $ A \in \MVL$, that is $(\clopI\eta_{X})\circ \e_{\clopI X}= \id_{\clopI X}$ and $(\Upsilon\e_{A})\circ \eta_{\Upsilon A}=\id_{\Upsilon A}$.
    
     Given that $\Upsilon \clopI X=(H_{\clopI \tau_{X}}, \tau_{C}, \leq_{C})$, we   will denote $H_{\clopI \tau_{X}}$ by $H_{C}$ for simplicity. So, we have that
     \begin{align*}\clopI \eta_{X}=\eta_{X}\cou|_{\clopI \tau_{C}}:\clopI \tau_{C}&\lto \clopI X\\
       \beta & \lmapsto \beta \circ \eta_{X}.
     \end{align*} and,  for $\alpha \in \clopI \tau_{X}$ and $x \in X$, it holds that 
	\begin{align*}
		\eta_{X}\cou(\tilde{\alpha})(x)=& \ (\tilde{\alpha}\circ \eta_{X})(x)\\
		=& \ \tilde{\alpha}(\eta_{X}(x))\\
		=& \ \eta_{X}(x)(\alpha)\\
		=& \ \alpha(x).
	\end{align*}
	So $((\clopI\eta_{X})\circ \e_{\clopI X})(\alpha)=(\clopI\eta_{X})(\tilde{\alpha})=\alpha$. 
    
	On the other hand, since $\Upsilon A= (H_{A},\tau_{A}, \leq_{A})$ we have, for $f \in H_{A}$, 
	\begin{align*}
	\eta_{\Upsilon A}: H_{A}& \lto  H_{\clopI\tau_{a}} \\
 f& \lmapsto  \eta_{\Upsilon A}(f)
 \end{align*}
 where
 \begin{align*}
 \eta_{\Upsilon A}(f):\clopI \tau_{A}& \lto [0,1]\\
 \tilde{b}& \lmapsto \tilde{b}(f),
	\end{align*}
 and thus
	$((\Upsilon\e_{A})\circ \eta_{\Upsilon A})(f)= \Upsilon\e_{A}(\eta_{\Upsilon A}(f))=\eta_{\Upsilon A}(f)\circ \e_{A} \in H_{A}$ and
	\begin{align*}
		(\eta_{\Upsilon A}(f)\circ \e_{A})(a)=& \ \eta_{H_{A}}(f)(\tilde{a})\\
		=& \ \tilde{a}(f)\\
		=& \ f(a),
	\end{align*}
	for all $a \in A$. Then $(\Upsilon\e_{A} \circ \eta_{\Upsilon A})(f)=\eta_{\Upsilon A}(f) \circ \e_{A}=f$. 
\end{proof}

We will now show how Priestley Duality extends to positive MV-algebras and Priestley MV-spaces.

\begin{lemma}\label{compacto}
	For any MV-lattice $(A, +, \cdot, \vee, \wedge, 0,1)$, the associated MV-space $(H_{A}, \tau, \leq)$ is compact.
\end{lemma}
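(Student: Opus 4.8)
The plan is to use the Alexander Subbase Lemma for MV-topologies (Lemma~\ref{alex}), since $\tilde A$ is by construction a subbase (indeed a base) for $\tau$, but it need not be \emph{large}. So the first step is to pass to the large subbase $S = \{\, n\tilde a \mid a \in A,\ n < \omega \,\}$ generated by $\tilde A$; this is still a subbase for $\tau$ (adding truncated multiples of existing opens to a base keeps it a subbase), and it is large by construction. By Lemma~\ref{alex} it then suffices to show that every cover of $H_A$ by elements of $S$ admits an additive subcover.

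So suppose $\{\, n_i \tilde a_i \mid i \in I \,\}$ covers $H_A$, i.e. $\bigvee_{i\in I} n_i\widetilde{a_i} = \1$ in $[0,1]^{H_A}$; equivalently, $\bigvee_{i\in I} n_i a_i = 1$ when we compute the join in the semisimple MV-algebra $B = \langle \tilde A\rangle \subseteq [0,1]^{H_A}$ (using that $T\colon A \to \tilde A$ is an isomorphism of MV-lattices and that $\tilde A$ generates $B$). The key step is then a compactness-type argument \emph{inside} $B$: I claim that this supremum is attained already by a finite additive combination, i.e. there are finitely many indices $i_1,\dots,i_k \in I$ with $n_{i_1} a_{i_1} \oplus \cdots \oplus n_{i_k} a_{i_k} = 1$ in $B$. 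The cleanest way to see this is via the correspondence between points of $H_A$ and maximal ideals of $B$ (Lemma~\ref{equipotencia} together with Remark~\ref{B[0,1]}): if no finite additive subcombination equalled $1$, then the set $\{\, n_i a_i \mid i \in I\,\}$ would generate a proper ideal $J$ of $B$ (it is closed under $\oplus$ up to domination and downward closed once we take the ideal it generates, and properness is exactly the failure of all finite $\oplus$-sums to reach $1$), hence $J$ would be contained in some maximal ideal $M$ of $B$; the corresponding homomorphism $f_M \colon B \to [0,1]$, restricted along $T$, gives a point $f \in H_A$ with $f(n_i a_i) = n_i\widetilde{a_i}(f) = 0$ for all $i$, contradicting that the $n_i\widetilde{a_i}$ cover $H_A$. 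Thus a finite additive subcover exists, and Lemma~\ref{alex} yields compactness of $(H_A,\tau)$.

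The main obstacle is the ideal-membership bookkeeping in that middle step: one must check carefully that ``no finite $\oplus$-sum of the $n_i a_i$ equals $1$'' really does produce a \emph{proper} ideal in the semisimple algebra $B$, and that the maximal ideal containing it corresponds, under the bijection of Lemma~\ref{equipotencia}, to a genuine point of $H_A$ annihilating every $n_i a_i$ --- here one uses Remark~\ref{B[0,1]} to identify maximal ideals of $B$ with homomorphisms $B \to [0,1]$, and the fact that such a homomorphism composed with $T$ lands in $H_A$. A minor point to dispatch along the way is that the largeness enlargement from $\tilde A$ to $S$ does not change the topology and that an additive subcover drawn from $S$ translates back to an additive subcover in the original sense (multiples of multiples are still multiples), so no generality is lost.
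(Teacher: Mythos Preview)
Your overall strategy---Alexander Subbase Lemma plus the ``no additive subcover $\Rightarrow$ proper ideal $\Rightarrow$ maximal ideal $\Rightarrow$ uncovered point'' argument---is exactly the paper's, and the ideal-theoretic part is fine. The gap is in your choice of large subbase.

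First, $\tilde A$ is \emph{not} a base for $\tau_A$: by construction the base is the MV-algebra $B=\langle\tilde A\rangle$, which contains elements such as $\tilde a^{\,*}$. Second, $\tilde A$ is not even a subbase in the sense of Definition~\ref{subbase MV-top}: that definition allows only $\oplus$, $\odot$, $\wedge$, \emph{not} the involution $*$, and an element like $\tilde a^{\,*}\in B$ is in general not a $\{\oplus,\odot,\wedge\}$-combination of elements of $\tilde A$ (think of a positive MV-algebra that is not closed under negation). Consequently your enlargement $S=\{\,n\tilde a\mid a\in A,\ n<\omega\,\}$ is also not a subbase; in fact, since $A$ is closed under $+$ you have $n\tilde a=\widetilde{na}\in\tilde A$, so $S=\tilde A$ and you have added nothing. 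Lemma~\ref{alex} therefore does not apply to your $S$.

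The fix the paper uses is to take $\tilde A\cup\tilde A^{*}$ as the large subbase: pushing all $*$'s to the leaves rewrites any $\{\oplus,{}^{*}\}$-term over $\tilde A$ as a $\{\oplus,\odot\}$-term over $\tilde A\cup\tilde A^{*}$, so this set \emph{is} a subbase for $\tau_A$; and it is large because $n\tilde a=\widetilde{na}\in\tilde A$ and $n(\tilde a^{\,*})=(\tilde a^{\,n})^{*}=(\widetilde{a^{n}})^{*}\in\tilde A^{*}$. With that replacement your second paragraph goes through verbatim and matches the paper's proof.
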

\begin{proof}
Since $B= \langle \tilde{A} \rangle$, by Proposition \ref{termo}, every element of $B$ is a finite combination, by sum and product, of elements of $\tilde{A} \cup \tilde{A}^{*}$. So, $\tilde{A} \cup \tilde{A}^{*}$ is a large subbase of $\tau$ (see Definition \ref{subbase MV-top}). In order to prove that 
$(H_{A}, \tau_{A}, \leq)$ is compact, we shall now verify that the hypotheses of Alexander Subbase Lemma for MV-topologies (Lemma \ref{alex}) are satisfied.

Let $W \subseteq \tilde{A}\cup \tilde{A}^{*}$ be a cover of $H_{A}$, that is, $\bigvee W= \1$. If $W$ does not have an additive subcover, then for all $n \in \N$ and $a_{1}, \cdots, a_{n} \in W$, $a_{1}\oplus, \cdots \oplus a_{n}<1$. Thus the ideal $(W]$ of $B$ generated by $W$ is proper and, therefore, is contained in some $M \in \Max B$. Moreover, by Lemma \ref{equipotencia}, $M=\bar{f}^{-1}[0]$ for some MV-algebra homomorphism $\bar{f}:B\lto [0,1]$ and $f \in H_{A}$. So $b(f)=\bar{f}(b)=0$ for all $b \in M$ and $\bigvee W(f)=\bigvee_{b\in W}b(f)\leq \bigvee_{b\in M}b(f)=0$. This implies  $\bigvee W \neq \1$, in contradiction with the hypothesis. Therefore, $(H_{A}, \tau)$ is compact.
\end{proof}

\begin{lemma}\label{P2}The  MV-space $(H_{A}, \tau_{A}, \leq)$ associated with any MV-lattice $A$ is totally order disconnected.
	  \end{lemma}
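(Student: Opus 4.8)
The plan is to exhibit, for every pair $f \nleq g$ in $H_{A}$, an explicit increasing clopen set separating them, produced by applying a one-variable MV-term to a generator $\tilde{a}$ and invoking Lemma \ref{separacion}.

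First I would record two elementary facts about the associated space. (i) \emph{Every element of $B = \langle\tilde{A}\rangle$ is clopen.} Indeed $B$ is a basis of $\tau_{A}$, so each $b \in B$ is open; and $B$ is a \emph{sub}algebra of $[0,1]^{H_{A}}$, hence closed under $^{*}$, so $b^{*} \in B \subseteq \tau_{A}$ is open too, i.e. $b$ is closed. (ii) \emph{Each generator $\tilde{a}$ is increasing.} If $h_{1} \leq h_{2}$ in $H_{A}$ then $\tilde{a}(h_{1}) = h_{1}(a) \leq h_{2}(a) = \tilde{a}(h_{2})$ by the very definition of the order on $H_{A}$. Moreover, since the operations of $B$ are computed pointwise in $[0,1]^{H_{A}}$, for any term $t$ in the language $\{\oplus,\odot\}$ in one variable one has $t(\tilde{a})(h) = t\bigl(\tilde{a}(h)\bigr) = t\bigl(h(a)\bigr)$ for all $h \in H_{A}$; and because $\oplus$ and $\odot$ are order-preserving in each argument on $[0,1]$, the unary function $t\colon[0,1]\to[0,1]$ is monotone, so $t(\tilde{a})$ is again an increasing element of $B$ --- that is, an increasing clopen of $\tau_{A}$.

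With this in hand the argument is short. Assume $f \nleq g$ in $H_{A}$; since $[0,1]$ is a chain there is $a \in A$ with $g(a) < f(a)$. Apply Lemma \ref{separacion} to obtain a term $t$ in $\{\oplus,\odot\}$ with $t(g(a)) = 0$ and $t(f(a)) = 1$, and put $\alpha := t(\tilde{a})$. By the previous paragraph $\alpha$ is an increasing clopen of $\tau_{A}$, and evaluating pointwise gives $\alpha(f) = t(f(a)) = 1$ and $\alpha(g) = t(g(a)) = 0$. This is precisely the separation demanded by P2, so $(H_{A},\tau_{A},\leq)$ is totally order disconnected.

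The only delicate point, and the one I would spell out carefully in the write-up, is the monotonicity of the unary function induced by the term coming from Lemma \ref{separacion}: this follows either by inspecting the terms used in its proof (each is built solely from $\oplus$, $\odot$ and a single variable, hence is a composite of monotone maps) or, more cleanly, from the general observation that every $\{\oplus,\odot\}$-term in one variable defines a monotone map on $[0,1]$. Everything else is routine bookkeeping with pointwise operations and the definition of the order on $H_{A}$.
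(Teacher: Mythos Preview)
Your proof is correct and follows essentially the same line as the paper's: pick $a\in A$ with $g(a)<f(a)$, invoke Lemma~\ref{separacion} to get a one-variable $\{\oplus,\odot\}$-term $t$ with $t(g(a))=0$ and $t(f(a))=1$, and take $\alpha=t(\tilde{a})$ as the separating increasing clopen. The only cosmetic difference is in the justification that $\alpha$ is increasing: the paper observes that $t(a)\in A$ (since the MV-lattice $A$ is closed under $+$ and $\cdot$) and hence $\alpha=\widetilde{t(a)}\in\tilde{A}$ is automatically increasing by definition of the order on $H_A$, whereas you argue via monotonicity of the unary term function $t$ on $[0,1]$; both routes yield the same element and the same conclusion.
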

\begin{proof}
	We must prove that for $f \nleq g$ in $H_{A}$ there exists an increasing clopen $\alpha$ such that $\alpha(f)=1$ and $\alpha(g)=0$. Since $f \not\leq g$, there exists $a \in A$ such that $g(a)<f(a)$. Applying Lemma \ref{separacion}, for $x=g(a)$ and $y=f(a)$, we obtain a term $t$ such that $0=t(g(a))$ and $1=t(f(a))$. Furthermore, since $f$ and $g$ preserve $+$ and $\cdot$, and $t(a) \in A$, we have that $0=t(g(a))=g(t(a))=\widehat{t(a)}(g)$ and $1=t(f(a))=f(t(a))=\widehat{t(a)}(f)$, where $\alpha=\widehat{t(a)}$ is an increasing clopen of $\tau$.
\end{proof}
As a consequence of Lemmas \ref{compacto} and \ref{P2}, along with the fact that the MV-algebra generated by $A$ forms a base of clopen sets for $(H_{A}, \tau_{A}, \leq)$, we have the following result.

\begin{proposition}Let $A$ be an MV-lattice and $(H_{A}, \tau, \leq)$ its associated MV-space. Then $(H_{A}, \tau, \leq)$ is a Priestley MV-space. 
\end{proposition}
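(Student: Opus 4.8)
The plan is to verify the three defining properties P1, P2, and P3 of a Priestley MV-space for the associated space $(H_A, \tau_A, \leq)$ directly, invoking the lemmas already established. For P1 (compactness), this is precisely the content of Lemma \ref{compacto}, so nothing further is needed. For P2 (total order disconnectedness — the existence, whenever $f \nleq g$, of an increasing clopen $\alpha$ with $\alpha(f) = 1$ and $\alpha(g) = 0$), this is exactly Lemma \ref{P2}. So the first two paragraphs of the proof reduce to citing these two lemmas.

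The remaining point is P3, that $(H_A, \tau_A)$ has a basis of clopen sets. Here I would argue as follows. By construction, the semisimple MV-algebra $B = \langle \tilde{A} \rangle \subseteq [0,1]^{H_A}$ is a base for $\tau_A$; so it suffices to show that every element of $B$ is clopen, i.e.\ that $\tau_A$ is closed under the operation $\alpha \mapsto \alpha^*$ on elements of $B$, or more precisely that $B \subseteq \tau_A^*$ as well as $B \subseteq \tau_A$. Since $B$ is an MV-subalgebra of $[0,1]^{H_A}$, it is closed under $*$; hence for every $b \in B$ the complement $b^* = \1 \ominus b$ also lies in $B$, and therefore in $\tau_A$. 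Thus every basic open $b \in B$ has open complement $b^*$, which is to say $b$ is closed as well as open. So $B$ is a base of clopens, establishing P3.

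Putting these three observations together yields the proposition: $(H_A, \tau_A, \leq)$ satisfies P1 by Lemma \ref{compacto}, P2 by Lemma \ref{P2}, and P3 because the base $B = \langle \tilde A\rangle$ is an MV-subalgebra of $[0,1]^{H_A}$ and hence closed under complementation. I do not anticipate a genuine obstacle here; the only point requiring a word of care is P3, namely making explicit that membership of $b$ in an MV-subalgebra forces $b^* = \1 \ominus b$ to be in the same subalgebra and therefore to be open, so that $b$ is clopen — everything else is a direct appeal to the preceding lemmas. One could also remark, as the preamble to the proposition already does, that the statement is an immediate consequence of Lemmas \ref{compacto} and \ref{P2} together with this closure property of $B$.

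\begin{proof}
By Lemma \ref{compacto}, $(H_A, \tau_A)$ is compact, so P1 holds. By Lemma \ref{P2}, $(H_A, \tau_A, \leq)$ is totally order disconnected, i.e.\ P2 holds. It remains to check P3. The MV-algebra $B = \langle \tilde A \rangle \subseteq [0,1]^{H_A}$ is, by construction, a base for $\tau_A$. Since $B$ is an MV-subalgebra of $[0,1]^{H_A}$, it is closed under $*$; hence for every $b \in B$ we have $b^* \in B \subseteq \tau_A$. Thus each basic open $b \in B$ is also closed, so $B$ is a base of clopen sets and P3 holds. Therefore $(H_A, \tau_A, \leq)$ is a Priestley MV-space.
\end{proof}
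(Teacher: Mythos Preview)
Your proof is correct and matches the paper's approach exactly: the paper states the proposition as an immediate consequence of Lemmas \ref{compacto} and \ref{P2} together with the observation that the MV-algebra $B = \langle \tilde A\rangle$ is a base of clopens for $\tau_A$. Your only addition is to spell out why $B$ consists of clopens (closure of $B$ under $*$), which is a welcome clarification of what the paper leaves implicit.
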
 
\begin{lemma}
If $(X, \tau, \leq)$ is a partially ordered MV-space, then $A=\clopI \tau$ is $H$-complete.
\end{lemma}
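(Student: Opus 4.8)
The plan is to realize $A:=\clopI\tau$ concretely inside $[0,1]^X$ and to separate the relevant MV-homomorphisms by evaluating at points of $X$.

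First I would check that $\clop\tau$, the set of clopen fuzzy subsets of $(X,\tau)$, is an MV-subalgebra of $[0,1]^X$: it contains $\0$ (since $\0\in\tau$ and $\0^*=\1\in\tau$), it is closed under $*$ (if $\alpha$ is both open and closed, so is $\alpha^*$), and it is closed under $\oplus$ and $\odot$, because for clopen $\alpha,\beta$ one has $\alpha\oplus\beta,\alpha\odot\beta\in\tau$ while $(\alpha\oplus\beta)^*=\alpha^*\odot\beta^*\in\tau$ and $(\alpha\odot\beta)^*=\alpha^*\oplus\beta^*\in\tau$ by the MV-topology axioms. The increasing clopens $A$ contain $\0,\1$ and are closed under $\oplus,\odot,\vee,\wedge$ (all monotone on $[0,1]$) but not under $*$; hence $A$ is a positive subreduct of $\clop\tau$, so $A$ is a positive MV-algebra, and it generates the MV-algebra $B:=\langle A\rangle\subseteq\clop\tau\subseteq[0,1]^X$. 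It is with respect to this $B$ that $H$-completeness must be established.

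The key observation is that, for each $x\in X$, the coordinate projection $[0,1]^X\to[0,1]$ restricts to an MV-homomorphism $\operatorname{ev}_x\colon B\to[0,1]$, and that the partial order of $X$ is reflected on $A$: if $x\le y$ then $\operatorname{ev}_x(\alpha)=\alpha(x)\le\alpha(y)=\operatorname{ev}_y(\alpha)$ for every increasing clopen $\alpha$, that is $(\operatorname{ev}_x,\operatorname{ev}_y)\in\preccurlyeq_A$. With this in hand I would argue as follows. Let $E$ be a generating positive MV-algebra of $B$ with $A\subseteq E$ and $\preccurlyeq_A=\preccurlyeq_E$; since $A\subseteq E$ it suffices to prove $E\subseteq A$. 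If this fails, choose $\beta\in E\setminus A$; then $\beta$ is a clopen fuzzy set that is not increasing, so there exist $x_0\le y_0$ in $X$ with $\beta(x_0)>\beta(y_0)$. By the observation $(\operatorname{ev}_{x_0},\operatorname{ev}_{y_0})\in\preccurlyeq_A$; on the other hand $\beta\in E$ and $\operatorname{ev}_{x_0}(\beta)=\beta(x_0)>\beta(y_0)=\operatorname{ev}_{y_0}(\beta)$, so $\operatorname{ev}_{x_0}|_E\nleq\operatorname{ev}_{y_0}|_E$, i.e.\ $(\operatorname{ev}_{x_0},\operatorname{ev}_{y_0})\notin\preccurlyeq_E$. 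This contradicts $\preccurlyeq_A=\preccurlyeq_E$, so $E=A$ and $A$ is $H$-complete.

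I do not expect a genuine obstacle. The only routine bookkeeping is the closure check in the first step --- needed so that $A$ is legitimately a positive MV-algebra and so that the evaluations $\operatorname{ev}_x$ really belong to $\operatorname{Hom}_{\MV}(B,[0,1])$ --- while the whole content of the lemma is the one-line remark that a non-increasing clopen sitting in $E$ immediately produces a pair of point-evaluations that is comparable over $A$ but not over $E$.
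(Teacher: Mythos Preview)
Your proposal is correct and follows essentially the same line as the paper's proof: both use point-evaluations $\operatorname{ev}_x\in\operatorname{Hom}_{\MV}(B,[0,1])$ on the MV-algebra $B=\langle A\rangle\subseteq[0,1]^X$, observe that $x\le y$ yields $(\operatorname{ev}_x,\operatorname{ev}_y)\in\preccurlyeq_A$, and conclude from $\preccurlyeq_A=\preccurlyeq_E$ that every $\beta\in E$ is increasing, hence in $A$. The only differences are cosmetic --- you argue by contradiction while the paper argues directly, and you verify explicitly that $\clop\tau$ is an MV-subalgebra of $[0,1]^X$ (so that the evaluations are genuine MV-homomorphisms), whereas the paper routes the same computation through the extensions $\bar f_x$ of the maps $f_x\in H_A$.
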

\begin{proof}
    Let $B= \langle A \rangle $ be the MV-algebra generated by $A$, and let $E$ be another  positive MV-algebra generating  $B$ and such that $A \subseteq E$ and $\preccurlyeq_{A}= \preccurlyeq_{E}$. We will prove that $A=E$. To do so, consider an element $e \in E$ and  note that, $e=t(a_{1}, \ldots, a_{n})$ for some term $t$ in the language $\{ \oplus, ^{*}\}$ and $a_{i} \in A$.  Moreover, if $f_{x_{i}}\in H_{A}$ and $\bar{f}_{x_{i}} \in \hom(B, [0,1])$ is its corresponding extension, then
    \begin{align*}
    \bar{f}_{x_{i}}(e)=&\bar{f}_{x_{i}}(t(a_{1}, \ldots, a_{n}))\\
    =& t(\bar{f}_{x_{i}}(a), \ldots , \bar{f}_{x_{i}}(a_{n}))\\ 
    =& t(f_{x_{i}}(a_{1}), \ldots , f_{x_{i}}(a_{n}))\\
    =&t(a_{1}(x_{i}),\ldots , a_{n}(x_{i}))\\
    =& t(a_{1}, \ldots, a_{n})(x_{i})\\
    =& e(x_{i}).
    \end{align*}
    Now, if   $x_{1}\leq x_{2} \in X$, by the proof of the previous lemma, $\eta_{X}$ preserves the order, and thus $f_{x_{1}}\leq f_{x_{2}} \in H_{A}$. So,  $(\bar{f}_{x_{1}}, \bar{f}_{x_{2}}) \in \preccurlyeq_{A}= \preccurlyeq_{E}$ and $e(x_{1})=\bar{f}_{x_{1}}(e) \leq \bar{f}_{x_{2}}(e)=e_{x_{2}}$. It follows that $e \in \clopI\tau=A$ and, therefore, $A=E$.
\end{proof}

\begin{lemma}
If $(X, \tau, \leq)$ is a Priestley MV-space, then $\eta_{X}: X \lto \Upsilon\clopI\tau$ is an order isomorphism. 
\end{lemma}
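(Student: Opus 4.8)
The map $\eta_X \colon X \to \Upsilon\clopI\tau = H_{\clopI\tau}$ sending $x$ to the evaluation functional $f_x$ is continuous and order-preserving (this is essentially the unit of the adjunction in Theorem \ref{adjuncionP}, and order-preservation was already used in the previous lemma). The plan is to establish the remaining three properties: $\eta_X$ is injective, order-reflecting, and surjective. Injectivity and order-reflection will come from P2 (total order disconnectedness), while surjectivity will come from P1 (compactness) together with P3 (the clopen base); continuity of the inverse will then follow from the fact that $\clopI\tau$ generates a base of $\tau$.

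First I would handle \emph{order-reflection} (which subsumes injectivity). Suppose $x \nleq y$ in $X$. By P2 there is an increasing clopen $\alpha$ with $\alpha(x) = 1$, $\alpha(y) = 0$; then $\alpha \in \clopI\tau$ and $f_x(\alpha) = \alpha(x) = 1 > 0 = \alpha(y) = f_y(\alpha)$, so $f_x \nleq f_y$. Hence $f_x \leq f_y$ implies $x \leq y$, and combined with order-preservation this shows $x \leq y \iff f_x \leq f_y$; in particular $x \neq y$ implies (by antisymmetry, applying P2 in whichever direction fails) that $f_x \neq f_y$, so $\eta_X$ is injective.

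The main work is \emph{surjectivity}. Let $f \in H_{\clopI\tau}$, i.e. an MV-lattice homomorphism $\clopI\tau \to [0,1]$. I would consider the family of closed sets $\mathcal{C} = \{\alpha^* \mid \alpha \in \clopI\tau,\ f(\alpha) = 1\} \cup \{\beta \mid \beta \in \clopI\tau,\ f(\beta) = 0\}$ and argue that it has the finite intersection property using that $f$ is a homomorphism (if finitely many such sets had empty "intersection", i.e. their infimum were $\0$, one could combine them via $\wedge$, $\oplus$, $\odot$ into a clopen whose $f$-value both is and is not achievable, a contradiction — here one exploits that $\clopI\tau$ is closed under $\wedge,\oplus,\odot$ and that $f$ preserves all of these). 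By compactness (P1) — more precisely, one should phrase the finite-intersection argument as an open cover and invoke that compactness gives an \emph{additive} subcover, which is exactly strong enough to contradict a homomorphism value — one obtains a point $x$ lying in every set of $\mathcal{C}$. For such $x$ one checks $\alpha(x) = f(\alpha)$ for every $\alpha \in \clopI\tau$: the defining conditions force $\alpha(x) = 1$ whenever $f(\alpha) = 1$ and $\alpha(x) = 0$ whenever $f(\alpha) = 0$; for intermediate values one squeezes $f(\alpha)$ between $f(n\alpha \ominus (n-1))$-type clopen approximations using Lemma \ref{separacion} and the term constructions there, so that $f(\alpha)$ is pinned down exactly by $\alpha(x)$. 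This gives $f = f_x = \eta_X(x)$.

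Finally, $\eta_X$ is a continuous order-isomorphism onto $\Upsilon\clopI\tau$; it remains to see $\eta_X^{-1}$ is continuous, equivalently that $\eta_X$ is open, equivalently a homeomorphism. Since $\widetilde{\clopI\tau} = \{\tilde\alpha \mid \alpha \in \clopI\tau\}$ generates (as an MV-algebra) a base of $\tau_C$ on $H_{\clopI\tau}$, and the computation $\eta_X^{\leftarrow}(\tilde\alpha) = \alpha$ (already carried out in the proof of Theorem \ref{adjuncionP}) shows $\eta_X^{\leftarrow}$ carries this generating subbase into $\clopI\tau \subseteq \tau$, while conversely $\eta_X^{\to}$ carries the clopen base of $\tau$ (which exists by P3, and whose increasing members lie in $\clopI\tau$) into $\tau_C$ — here one uses that, by P3 plus the structure of $\tau$, every open is a join of elements of $\langle\clopI\tau\rangle$ restricted appropriately. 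Thus $\eta_X$ is a homeomorphism and order-isomorphism. The hard part will be the surjectivity step, specifically making the finite-intersection/compactness argument deliver a point whose evaluation \emph{exactly} matches $f$ on all clopens, not merely on the $\{0,1\}$-valued ones — this is where the MV-flavour (as opposed to the classical Priestley argument) genuinely enters, and Lemma \ref{separacion} is the tool that makes the interior values rigid.
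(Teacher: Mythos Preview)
Your order-reflection argument matches the paper's exactly. For surjectivity the paper takes a different and more direct route, and your finite-intersection formulation has problems worth flagging.

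The paper argues by contradiction: if no $x$ satisfies $f_x=f$, then for each $x$ there is $\alpha\in\clopI\tau$ with $f(\alpha)\neq\alpha(x)$, and Lemma~\ref{separacion} is applied \emph{immediately} (not as a post-hoc squeeze) to produce an increasing clopen $\beta_x$ with $\beta_x(x)=1$ and $f(\beta_x)=0$ (or, in the dual case, some $\theta_x^*\in\clopI\tau$ with $\theta_x(x)=1$ and $f(\theta_x^*)=1$). These form an open cover; compactness yields an additive subcover, and applying $f$ to the resulting identity forces $f(\1)=0$ or $f(\0)=1$.

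Your FIP approach is morally dual to this but has two concrete issues. First, your family $\mathcal{C}$ is backwards: to obtain $\alpha(x)=1$ when $f(\alpha)=1$ you want $x$ to lie in the closed set $\alpha$ (with membership value $1$), not in $\alpha^*$; likewise you want $\beta^*$, not $\beta$, when $f(\beta)=0$. Second, MV-compactness is stated only in terms of additive subcovers of open coverings; there is no ready-made FIP statement for fuzzy closed sets that hands you a point where every member of $\mathcal{C}$ takes the value $1$. You acknowledge the need to rephrase as an open-cover argument, but once you carry that out you will be running the paper's proof: the translation ``for each $x$ missing some condition, produce an open witness with value $1$ at $x$'' is exactly what Lemma~\ref{separacion} accomplishes. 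So your two-stage plan (find $x$ from the $\{0,1\}$-valued constraints, then squeeze intermediate values) collapses to the paper's one-stage plan (use Lemma~\ref{separacion} first to reduce everything to $\{0,1\}$-valued constraints, then cover and contradict).

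Finally, the lemma claims only that $\eta_X$ is an order isomorphism; the paper neither states nor proves here that it is a homeomorphism, so your last paragraph is unnecessary for this statement.
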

\begin{proof}
Let $A=\clopI X$, we have that $\Upsilon\clopI X=(H_{A}, \tau_{A}, \leq_{A})$. To simplify the notations, we denote $\Upsilon\clopI X$ simply by $H_{A}$. So
    \begin{align*}
		\eta_{X}: X &\lto  H_{A} \\ 
		x& \lmapsto f_{x} 
	\end{align*}
where \begin{align*}
	f_{x} :A & \lto [0,1]\\
	\alpha& \lmapsto \alpha(x)
\end{align*}
is a homomorphism of positive MV-algebras.

If $x_{1} \leq x_{2}$, then $a(x_{1})\leq a(x_{2})$ for all $a \in A$, that is, $f_{x_{1}}(a)\leq f_{x_{2}}(a)$ for all $a \in A$. Therefore $f_{x_{1}}\leq f_{x_{2}}$ and $\eta_{X}$ preserves the order. On the other hand, if $x_{1}\ \nleq x_{2}$, since $(x, \tau, \leq)$ is totally order disconnected, there exists an increasing clopen $\alpha \in A$ such that $\alpha(x_{1})=1$ and $\alpha(x_{2})=0$. Thus $f_{x_{1}}(\alpha)=1>0=f_{x_{2}}(\alpha)$ and $f_{x_{1}}\nleq f_{x_{2}}$. Therefore $x_{1}\leq x_{2} \in X$ if and only if $f_{x_{1}}\leq f_{x_{2}}$.

Now, to prove that $\eta_{X}$ is onto, let $f \in H_{A}$ and assume, by contradiction, that for all $x \in X$ there exists $\alpha \in A$ such that $f(\alpha) \neq \alpha(x)$. Since $f(\alpha)$ and $\alpha(x)$ belong to $[0,1]$, we have that $f(\alpha) < \alpha(x)$ or $\alpha(x)< f(\alpha)$ . If $f(\alpha)< \alpha(x)$, by Lemma \ref{separacion}, there exists a term 
$t$ in the language $\{\oplus, \odot\}$ such that $0=t(f(\alpha))=f(t(\alpha))$ and $1=t(\alpha(x))=t(\alpha)(x)$, where $t(\alpha)=\beta_{x} \in A$. So, for each $x \in X$, there exists $\beta_{x} \in A$ such that $\beta_{x}(x)=1$ and $f(\beta_{x})=0$. Then, $\bigvee\limits_{x \in X}\beta_{x}= \n{1}$, that is, the family $\{\beta_{x}\}_{x \in X}$ is a open covering of $X$ and since $X$ is compact, $\{\beta_{x}\}_{x \in X}$ contains an additive covering $\{\beta_{x_{i}}\}_{i=1}^{n}$. So, $f(\n{1})= f(\beta_{x_{1}}\oplus \cdots \oplus \beta_{x_{n}})=f(\beta_{x_{1}})\oplus \cdots \oplus f(\beta_{x_{n}})=0$ which is impossible, because $f(\n{1})=1$. Similarly, when $\alpha(x)< f(\alpha)$, there is a term $\gamma$ in the language $\{\oplus, \odot\}$ such that $1=\gamma(f(\alpha))= f(\gamma (\alpha))$ and $0=\gamma(\alpha(x))= \gamma(\alpha)(x)$. Taking $\theta_{x}^{*}=\gamma(\alpha) \in A$, we have that $\theta_{x}(x)=1$, $f(\theta_{x}^{*})=1$ and $\{\theta_{x}\}_{x \in X}$ is an open covering which contains an additive covering $\{\theta_{x_{i}}\}_{i=1}^{n}$. So, $\theta_{x_{1}} \oplus \cdots \oplus \theta_{x_{n}}=1$ and, since $\theta_{x_{i}}^{*} \in A$, 
\begin{align*}
    f(\n{0})=&f((\theta_{x_{1}} \oplus \cdots \oplus \theta_{x_{n}})^{*})\\
    =&f(\theta_{x_{1}}^{*}\odot\cdots \odot \theta_{x_{n}}^{*})\\
    =& f(\theta_{x_{1}}^{*}) \odot \cdots \odot f(\theta_{x_{n}}^{*})\\
    =&1
\end{align*}
which is not possible because $f(\n{0})=0$. Therefore $\eta_{X}$ is onto.
\end{proof}
\begin{lemma} Let $(H_{A}, \tau, \leq_{A})$  be the partially ordered MV-space associated with an $\lcc$ positive MV-algebra $A$ and such that $\tilde{A}$ is $H$-complete. 
Then the set of increasing clopens of $\tau_A$, is equal to $\tilde{A}$. 
\end{lemma}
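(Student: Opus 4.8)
The plan is to prove the two inclusions $\tilde A\subseteq\clopI\tau_A$ and $\clopI\tau_A\subseteq\tilde A$ separately. The first is immediate: for $a\in A$, the function $\tilde a$ lies in the base $B=\langle\tilde A\rangle$ of $\tau_A$, hence is open; since $B$ is an MV-subalgebra of $[0,1]^{H_A}$, also $\tilde a^{*}\in B$, so $\tilde a=(\tilde a^{*})^{*}$ is closed; and $\tilde a$ is increasing because, by definition of the order on $H_A$, $f\leq g$ means precisely that $f(a)\leq g(a)$ for all $a\in A$, i.e.\ $\tilde a(f)\leq\tilde a(g)$. Thus $\tilde A\subseteq\clopI\tau_A$.

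For the converse, fix an increasing clopen $\alpha$ of $\tau_A$. The first step is to show $\alpha\in B$. Being open, $\alpha=\bigvee X$ for some $X\subseteq B$; being closed, $\alpha^{*}$ is open, so $\alpha^{*}=\bigvee Z$ for some $Z\subseteq B$, and since $*$ is an order-reversing involution we get $\alpha=(\alpha^{*})^{*}=\bigwedge_{z\in Z}z^{*}=\bigwedge Y$ with $Y=Z^{*}\subseteq B$. Now $B$ is semisimple (by Proposition \ref{A}, since positive MV-algebras embed into powers of $[0,1]$, so $\bigcap_{f\in H_A}\ker f=\Delta$), and under the identification $H_A\cong\Max B$ of Lemma \ref{equipotencia} each $b\in B$, viewed as a function on $H_A$, corresponds to $\widehat b\in[0,1]^{\Max B}$; hence $\alpha=\bigvee\widehat X=\bigwedge\widehat Y$. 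Since $A$ is an $\lcc$ positive MV-algebra, $B$ is $\lcc$, so Corollary \ref{lccch} yields $\alpha\in\widehat B=B$.

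The second step invokes $H$-completeness. Having $\alpha\in B$, let $E$ be the positive subreduct of $B$ generated by $\tilde A\cup\{\alpha\}$; then $\tilde A\subseteq E$, and $E$ generates $B$ because $\langle E\rangle\supseteq\langle\tilde A\rangle=B$. Since $\tilde A\subseteq E$ we automatically have $\preccurlyeq_E\subseteq\preccurlyeq_{\tilde A}$, so it suffices to check $\preccurlyeq_{\tilde A}\subseteq\preccurlyeq_E$. Take $f_i,f_j\in\hom_{\MV}(B,[0,1])$ with $(f_i,f_j)\in\preccurlyeq_{\tilde A}$, i.e.\ $f_i(\tilde a)\leq f_j(\tilde a)$ for all $a\in A$; letting $g_i,g_j\in H_A$ be the homomorphisms corresponding to $f_i,f_j$ under Lemma \ref{equipotencia} (so $f_i(b)=b(g_i)$ for every $b\in B$), this says exactly that $g_i\leq g_j$ in $H_A$. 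As $\alpha$ is increasing, $f_i(\alpha)=\alpha(g_i)\leq\alpha(g_j)=f_j(\alpha)$. Since every element of $E$ is built from $\tilde A\cup\{\alpha\}$ using $\oplus,\odot,\vee,\wedge$, each of which is monotone in every argument, and $f_i,f_j$ are homomorphisms, it follows that $f_i(e)\leq f_j(e)$ for all $e\in E$, i.e.\ $(f_i,f_j)\in\preccurlyeq_E$. Hence $\preccurlyeq_{\tilde A}=\preccurlyeq_E$, and $H$-completeness of $\tilde A$ forces $\tilde A=E$; in particular $\alpha\in E=\tilde A$, which completes the proof.

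The delicate point is the second step: one must carefully keep the bijection of Lemma \ref{equipotencia} in view in order to translate the order-theoretic statement ``$\alpha$ is increasing on $(H_A,\leq)$'' into the algebraic statement ``any two comparable MV-homomorphisms $B\to[0,1]$ restrict to comparable maps on $E$''. The remaining ingredients --- membership of clopens in $B$ via $\lcc$-ness, and the monotonicity of the positive MV-operations --- are routine.
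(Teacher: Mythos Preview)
Your proof is correct and follows essentially the same strategy as the paper: first use $\lcc$-ness of $B=\langle\tilde A\rangle$ to place every clopen (in particular every increasing clopen) inside $B$, then compare $\preccurlyeq$-relations and invoke $H$-completeness of $\tilde A$. The only cosmetic difference is that the paper applies $H$-completeness directly with $E=\clopI\tau_A$ (which is already a positive MV-subreduct, so no monotonicity-of-operations argument is needed), whereas you adjoin one increasing clopen $\alpha$ at a time and take $E$ to be the positive subreduct generated by $\tilde A\cup\{\alpha\}$; both routes reach the same conclusion by the same mechanism.
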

 This lemma indicates that if $A$ is an H-complete lcc MV-algebra, then $\e_{A}$ is an isomorphism of positive MV-algebras. 
\begin{proof}Let $(H_{A},  \tau_{A}, \leq)$ be the MV-space associated to $A$ and let $B\subseteq [0,1]^{H_{A}}$ be the MV-algebra generated by $\tilde{A} $. Since $B$ is $\lcc$, $B=\clop\tau_{A}$ and by Proposition \ref{A}, $A \cong \tilde{A}$. Morevoer, since  $\tilde{A} \subseteq \clopI\tau_{A}$, then
$\clopI\tau_{A}$ is also a positive MV-algebra generating $B$ and $\preccurlyeq_{\clopI\tau_{A}}\subseteq\preccurlyeq_{A}$. To show the other inclusion, we assume $(\bar{f_{1}},\bar{f_{2}}) \in \preccurlyeq_{A}$ and $\alpha \in \clopI\tau_{A}$. Thus $\bar{f_{1}}|_{A} \leq \bar{f_{2}}|_{A}$, and the corresponding $f_{1}, f_{2} \in H_{A}$ are such that $f_{1}\leq f_{2}$. So,  $\bar{f_{1}}(\alpha)=\alpha(f_{1})\leq \alpha(f_{2})=\bar{f_{2}}(\alpha)$. Therefore $(\bar{f_{1}},\bar{f_{2}})\in \preccurlyeq_{\clopI\tau_{A}}$ and, since $ \bigcap\limits_{f \in H_{A} } \ker f = \Delta $ and $\tilde{A}$ is $H$-complete, we conclude that $A\cong\tilde{A}=\clopI\tau_{A}$.
\end{proof}
Let $\MVlcc_{+H}$ be the category of $\lcc$ positive MV-algebras that are $H$-complete and $\Pries^{\lcc}$ be the category of Priestley MV-spaces whose  set of increasing clopen sets is an $\lcc$ positive MV-algebra. Then, thanks to Theorem \ref{adjuncionP} and the last two lemmas, the following result readily follows.
\begin{theorem}\label{dualth}
The restrictions of $\clopI$ and $\Upsilon$ form a duality between $\MVlcc_{+H}$ and $\Pries^{\lcc}$.
\end{theorem}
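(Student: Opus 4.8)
The plan is to assemble the proof of Theorem \ref{dualth} purely from the adjunction of Theorem \ref{adjuncionP} together with the four lemmas immediately preceding it, checking that the unit and counit become isomorphisms exactly on the two subcategories in play. First I would record what the adjunction $\clopI \dashv \Upsilon$ gives us: for every partially ordered MV-space $X$, the unit component $\eta_X : X \to \Upsilon\clopI\tau_X$, and for every MV-lattice $A$, the counit component $\e_A : A \to \clopI\Upsilon A$. A general-nonsense fact about adjunctions is that the induced equivalence lives on the full subcategories on which, respectively, $\eta$ and $\e$ are isomorphisms; so the whole task reduces to identifying those two classes and matching them against $\Pries^{\lcc}$ and $\MVlcc_H$.

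Next I would verify that $\eta_X$ is an iso precisely for Priestley MV-spaces whose increasing clopens form an $\lcc$ positive MV-algebra, i.e. for objects of $\Pries^{\lcc}$. One inclusion is the fourth lemma above: if $(X,\tau,\leq)$ is a Priestley MV-space then $\eta_X$ is an order isomorphism, and it is continuous and open essentially because $\tilde A = \e_{\clopI\tau}[\clopI\tau]$ is a base of $\Upsilon\clopI\tau$ while $A = \clopI\tau$ is a base (of clopens) of $X$; so $\eta_X$ is a homeomorphism, hence an iso in $\PMV$. The remaining lemma (the one asserting $\clopI\tau$ is always $H$-complete for any partially ordered MV-space) shows that the MV-algebra dual produced is automatically $H$-complete, so membership in $\Pries^{\lcc}$ is governed only by the $\lcc$ condition, exactly as defined. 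For the other direction, if $\eta_X$ is an iso then $X \cong \Upsilon\clopI\tau_X = (H_{\clopI\tau}, \tau_{\clopI\tau}, \leq)$, which is a Priestley MV-space by the Proposition just after Lemma \ref{P2}, and whose increasing clopens equal $\widetilde{\clopI\tau}$, an $\lcc$ positive MV-algebra; hence $X \in \Pries^{\lcc}$.

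Symmetrically, I would check that $\e_A$ is an iso precisely for $H$-complete $\lcc$ positive MV-algebras, i.e. for objects of $\MVlcc_H$. The last lemma above does exactly this in one direction: when $A$ is $\lcc$ and $\tilde A$ is $H$-complete, the increasing clopens of $\tau_A$ coincide with $\tilde A$, which is the statement that $\e_A : A \to \clopI\Upsilon A$ is an isomorphism of positive MV-algebras (it is already injective by $\bigcap_{f\in H_A}\ker f = \Delta$, Proposition \ref{A}, and surjective by the lemma). Conversely, if $\e_A$ is an iso, then $A \cong \clopI\Upsilon A = \clopI\tau_A$, which is $H$-complete by the relevant lemma and $\lcc$ since $\Upsilon A \in \Pries^{\lcc}$; moreover $A$ is then a positive MV-algebra because $\clopI\tau_A$ is. Combining: $\clopI$ restricts to a functor $\Pries^{\lcc} \to (\MVlcc_H)^{\op}$ and $\Upsilon$ to $(\MVlcc_H)^{\op} \to \Pries^{\lcc}$ (the functoriality being inherited from Theorem \ref{adjuncionP}, since these are full subcategories), and on these restrictions the unit and counit are natural isomorphisms, giving the claimed duality.

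The main obstacle, and the place where I would spend the most care, is making the two ``conversely'' arguments airtight — that is, showing the classes cut out by ``$\eta_X$ iso'' and ``$\e_A$ iso'' are \emph{exactly} $\Pries^{\lcc}$ and $\MVlcc_H$ and not something strictly smaller. This requires knowing that the image of $\Upsilon$ always lands in spaces that are Priestley \emph{and} have $H$-complete clopen-algebra (so the only extra condition to impose is $\lcc$), and dually that the image of $\clopI$ always lands in $H$-complete positive MV-algebras. Both facts are supplied by the lemmas, but one must be a little careful that ``$\clopI\tau$ is $H$-complete'' genuinely pins down the $H$-complete generator among all generators of the ambient MV-algebra $\langle\clopI\tau\rangle$, and that the $\lcc$-ness of $\Upsilon A$ for $A \in \MVlcc_H$ really follows (it does: the generated MV-algebra $B = \langle\tilde A\rangle$ is $\lcc$ by hypothesis and equals $\clop\tau_A$, so $\clopI\tau_A = \tilde A$ is an $\lcc$ positive MV-algebra by definition). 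Once these closure-of-image facts are in hand, the duality is a formal consequence of the adjunction.
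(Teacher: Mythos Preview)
Your approach is essentially the paper's own: the paper simply says the duality ``readily follows'' from Theorem~\ref{adjuncionP} together with the last two lemmas (that $\eta_X$ is an order isomorphism for Priestley MV-spaces, and that $\e_A$ is an isomorphism for $H$-complete $\lcc$ positive MV-algebras), and you unpack exactly this, correctly also invoking the Proposition that $\Upsilon A$ is always Priestley and the lemma that $\clopI\tau$ is always $H$-complete to check that the two functors land in the right subcategories. Your additional ``conversely'' paragraphs, aiming to show $\Pries^{\lcc}$ and $\MVlcc_H$ are \emph{exactly} the fixpoint subcategories of the adjunction, go beyond what Theorem~\ref{dualth} asserts and are not needed; note moreover that the converse for spaces has a small circularity --- you invoke the last lemma to identify $\clopI(\Upsilon\clopI\tau_X)$ with $\widetilde{\clopI\tau_X}$, but that lemma already presupposes the $\lcc$ hypothesis on $\clopI\tau_X$ that you are trying to deduce.
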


\section{Comparing the dualities}\label{compsec}

In this section, we show that the duality developed in the previous section extends both classical Priestley Duality and the Stone-type duality for MV-algebras.

On the one hand, our duality is a proper extension of the classical one, in the sense that the categories involved in the latter are full subcategories of the ones involved in our duality, and the restrictions of the functors defined in the previous section to such subcategories yield the classical Priestley Duality.

On the other hand, just like classical Priestley Duality extends Stone Duality, our duality extends the one between lcc MV-algebras and Stone MV-spaces of \cite{ciro2016}.

In order to illustrate the relationship between $(\Upsilon,\clopI)$ and classical Priestley Duality, we first observe that, if $A$ is a bounded distributive lattice thought of as an MV-lattice, we have that $+$ and $\cdot$ coincide with $\vee$ and $\wedge$ respectively. Therefore, if $f: A \to [0,1]$ is an MV-lattice homomorphism, for all $a \in A$, $f(a)$ must be an additively idempotent element of $[0,1]$, and this implies that $f[A] = \{0,1\}$. So, homomorphisms of $A$ with values in $[0,1]$ are in bijective correspondence with those with values in $\{0,1\}$, and it is immediate to see that everything collapses to classical Priestley Duality.

For what concerns the relationship of our duality with the one in \cite{ciro2016}, let $A$ be an MV-lattice and $B \subseteq [0,1]^{H_{A}}$ the MV-algebra generated by $\tilde{A}$. Since $B$ is a semisimple MV-algebra, $B$ is isomorphic to an MV-subalgebra $\bar{B}$ of the MV-algebra of fuzzy subsets $[0,1]^{Max B}$ 
and $\bar{B}$ is a base of an MV-topology $\Omega_{B}$ over $\Max B$. In this way, we obtain the Stone MV-space $(\Max B, \Omega_{B})$, i.e., the image of $B$ under the functor $\Max: \MV \lto \TMV$ defined in \cite{ciro2016}.

It is worth noticing that the fuzzy version of Stone duality presented in \cite{ciro2016} was the starting point of a sheaf representation of a large class of MV-algebra and a sheaf embedding of all MV-algebras, published in \cite{dlprsoco}. The introduction of positive MV-algebra is very recent, therefore, the algebraic theory is not sufficiently developed yet to allow a similar sheaf representation as an easy consequence of our result. However, the good behaviour of the structure suggests that such a result may be possible in the next future.

\begin{proposition}\label{homeoprop}
The MV-spaces $(H_{A}, \tau_{A})$ and $(\Max B, \Omega_{B})$ are homeomorphic.
	\end{proposition}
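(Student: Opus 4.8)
The plan is to exhibit an explicit homeomorphism and, since both MV-topologies are defined by prescribed bases, to verify that the map carries one base onto the other. Recall from Lemma \ref{equipotencia} (and its proof) that there are mutually inverse bijections $\psi\colon H_A \to \hom_{\MV}(B,[0,1])$, $f \mapsto \bar f$, and $\varphi\colon \hom_{\MV}(B,[0,1]) \to H_A$. Composing $\psi$ with the canonical bijection $T\colon \hom_{\MV}(B,[0,1]) \to \Max B$, $\bar f \mapsto \bar f^{-1}[0]$ from Remark \ref{B[0,1]}, we obtain a bijection
\[
\Phi\colon H_A \lto \Max B, \qquad f \lmapsto \bar f^{\,-1}[0].
\]
First I would record that $\Phi$ is well defined and bijective: this is immediate from Lemma \ref{equipotencia} together with the fact that $B$ is semisimple, so that $T$ is a bijection.

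Next I would identify how $\Phi$ interacts with the two bases. The topology $\tau_A$ on $H_A$ has the MV-algebra $B = \langle \tilde A\rangle$ itself as a base, where $\tilde a(f) = f(a)$. The topology $\Omega_B$ on $\Max B$ has as a base the image $\bar B$ of $B$ under the Belluce embedding $\iota\colon B \hookrightarrow [0,1]^{\Max B}$ of Theorem \ref{bell repr}, namely $\iota(b) = \hat b$ with $\hat b(M) = \iota_M(b/M)$. The key computation is that, for every $b \in B$ and every $f \in H_A$,
\[
\hat b\bigl(\Phi(f)\bigr) = \hat b\bigl(\bar f^{\,-1}[0]\bigr) = \iota_{\bar f^{-1}[0]}\bigl(b / \bar f^{\,-1}[0]\bigr) = \bar f(b) = b(f),
\]
where the third equality is exactly the content of Remark \ref{B[0,1]} (that $f(a) = \iota_{f^{-1}[0]}(a/f^{-1}[0])$ for semisimple algebras, applied here to $\bar f$ and $B$) and the last equality is the definition of $\bar f$ from Lemma \ref{equipotencia}. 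In other words, $\hat b \circ \Phi = b$ as elements of $[0,1]^{H_A}$, i.e. $\Phi\cou(\hat b) = b$. Thus $\Phi\cou$ maps the base $\bar B$ of $\Omega_B$ bijectively onto the base $B$ of $\tau_A$.

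Finally I would invoke Lemma \ref{subbascont}: since $\bar B$ is a base (hence a subbase) for $\Omega_B$ and $\Phi\cou[\bar B] = B \subseteq \tau_A$, the map $\Phi$ is continuous; symmetrically, $(\Phi^{-1})\cou$ sends $B$ onto $\bar B \subseteq \Omega_B$, so $\Phi^{-1}$ is continuous as well. Hence $\Phi$ is a homeomorphism of MV-topological spaces. I do not expect a serious obstacle here: the only point requiring care is making sure that the identification $\hat b \circ \Phi = b$ is applied to the right algebra — it is $B$, with its own set of maximal ideals, that plays the role of ``$A$'' in Theorem \ref{bell repr} and Remark \ref{B[0,1]}, while $\tilde A$ only enters as a generating set and need not be mentioned again once $B$ and its base are fixed. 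A secondary bookkeeping point is that $\bar B$ should be taken as the literal image $\iota[B]$ rather than $B$ itself, so that the elements of the base of $\Omega_B$ are genuine fuzzy subsets of $\Max B$; with that convention the displayed identity says precisely that $\Phi\cou$ restricts to the inverse of $\iota$ on bases.
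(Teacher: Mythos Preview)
Your proof is correct and follows essentially the same approach as the paper: the same bijection $f\mapsto \bar f^{-1}[0]$ is constructed via Lemma \ref{equipotencia} and Remark \ref{B[0,1]}, and the same key identity $\hat b\circ\Phi=b$ is established. The only cosmetic difference is that the paper verifies the second half by showing the map is \emph{open} (computing $F^{\to}(b)=\hat b$ directly) rather than by checking continuity of $\Phi^{-1}$ via Lemma \ref{subbascont}; for a bijection these are equivalent.
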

\begin{proof} For each $f \in H_{A}$, let us consider $\bar{f}\in \hom_{\MV}(B, [0,1])$, the extension of $f$ as in the Lemma \ref{equipotencia}, and denote the maximal ideal $\bar{f}^{-1}[0]$ by $M_{\bar{f}}$.

By Lemma \ref{equipotencia}, the map 
\begin{align*}
    F:H_{A} & \lto \Max B\\
    f & \mapsto \bar{f}^{-1}[0]
\end{align*} 
is bijective. To show that $F$ is an open and continuous map, we consider a basic open set of $\O_{B}$  
\begin{align*}    
\hat{b}: \Max B &\lto [0,1] \\
             M& \mapsto \iota_{M}(b/M)  
\end{align*} where $\iota_{M}:B/M \lto [0,1]$ is the embedding as in Theorem \ref{bell repr}. 
Note that, by definition, $\bar{f}(b)=b(f)$ for all $b \in B \subset [0,1]^{H_{A}}$. Moreover, by Remark \ref{B[0,1]}, $\bar{f}(b)=\iota_{M_{\bar{f}}}(b/M_{\bar{f}})$. So
$$F\cou(\hat{b})(f)=\hat{b}\circ F(f)=\hat{b}(M_{\bar{f}})=\iota(b/M_{\bar{f}})=\bar{f}(b)=b(f).$$
	 That is $ F\cou(\bar{b})=b \in B \subset \tau_{A}$ and $F$ is continuous. On the other hand, for $b \in B \subset \tau_{A}$ and $M \in \Max B$, $F ^{\rightarrow}(b) \in [0,1]^{\Max B} $ is such that $$F ^{\rightarrow}(b)(M)=\bigvee\limits_{F(g)=M}b(g)=b(h)=\bar{h}(b)= \iota_{M}(b/M)=\hat{b}(M)$$ where $M=\bar{h}^{-1}[0]$. Thus $F^{\rightarrow}(b)=\hat{b} \in \Omega_{B}$  and $F$ is an open map.
\end{proof}
	\begin{corollary}
	    The restriction of the functor $\Upsilon$ to $\lcc$ MV-algebras is naturally isomorphic to the functor $\Max$.
	\end{corollary}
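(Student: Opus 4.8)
The plan is to show that $\Upsilon$ and $\Max$ agree, up to natural isomorphism, when restricted to the category of lcc MV-algebras. The groundwork has already been laid: Proposition \ref{homeoprop} establishes, for any MV-lattice $A$, a homeomorphism $F : (H_A, \tau_A) \to (\Max B, \Omega_B)$ where $B = \langle \tilde A \rangle$. When $A$ is itself an lcc MV-algebra, the MV-algebra it generates is $A$ itself (more precisely, $\tilde A$ is an isomorphic copy of $A$ inside $[0,1]^{H_A}$, by Proposition \ref{A}), so $B \cong A$ and the homeomorphism $F$ becomes a homeomorphism between $\Upsilon A = (H_A, \tau_A)$ and $\Max A = (\Max A, \Omega_A)$. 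The first step, then, is to fix the identification $B = A$ in this case and record that $F_A : \Upsilon A \to \Max A$ is the homeomorphism $f \mapsto f^{-1}[0]$, with inverse sending a maximal ideal $M$ to the composite $A \to A/M \xhookrightarrow{\iota_M} [0,1]$.

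The second, and main, step is to verify that the family $\{F_A\}$ is natural, i.e.\ that for every MV-homomorphism $q : A \to C$ between lcc MV-algebras the square
\[
\xymatrix{
\Upsilon C \ar[r]^{F_C} \ar[d]_{\Upsilon q} & \Max C \ar[d]^{\Max q} \\
\Upsilon A \ar[r]_{F_A} & \Max A
}
\]
commutes. Chasing an element $h \in H_C$: along the top and right we get $\Max q\, (h^{-1}[0]) = q^{-1}[h^{-1}[0]] = (h \circ q)^{-1}[0]$; along the left and bottom we get $F_A(\Upsilon q (h)) = F_A(h \circ q) = (h \circ q)^{-1}[0]$. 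The two agree, so naturality holds — provided one checks that $\Max q$ really is given by ideal contraction $M \mapsto q^{-1}[M]$ (this is the definition of the functor $\Max$ from \cite{ciro2016}, and it is well-defined on maximal ideals precisely because homomorphic preimages of maximal ideals of $[0,1]$-valued semisimple algebras stay maximal, which is part of the duality set-up there).

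The main obstacle is bookkeeping rather than conceptual: one must be careful that the isomorphism $B \cong A$ used in the lcc case is compatible with the way $F$ was defined in Proposition \ref{homeoprop} (where $F^\cou(\hat b) = b$ for $b \in B$), so that $F_A$ is genuinely natural and not merely naturally iso after twisting by automorphisms of each $\Max A$. Concretely, one should check that under $B = \langle \tilde A \rangle \cong A$ the basic open $\hat b \in \Omega_B$ corresponds to $\tilde a \in \tau_A$ exactly when $b$ corresponds to $a$, so that $F_A^\cou$ is the identity on the common base; this makes the homeomorphism canonical and the naturality square strictly commutative. Once that is in place, the two functors $\Upsilon|_{\MVlcc}$ and $\Max$ have the same object assignment up to the canonical homeomorphism $F_A$ and the same morphism assignment, so $F = \{F_A\}$ is the desired natural isomorphism, and the corollary follows.
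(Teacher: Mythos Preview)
Your overall strategy is sound and, in fact, you treat naturality more explicitly than the paper does. However, there are two points the paper addresses that you gloss over, and both are needed to make the argument complete.

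First, you assert that when $A$ is an lcc MV-algebra ``the MV-algebra it generates is $A$ itself,'' citing Proposition~\ref{A}. But Proposition~\ref{A} only yields $T : A \to \tilde A$ as an isomorphism of MV-\emph{lattices}; to conclude that $\tilde A$ is already an MV-subalgebra of $[0,1]^{H_A}$ (so that $B = \langle \tilde A \rangle = \tilde A \cong A$ as MV-algebras) one needs $\widetilde{a^*} = \tilde a^{\,*}$, i.e.\ $f(a^*) = f(a)^*$ for every $f \in H_A$. This is exactly the statement that every positive MV-homomorphism $A \to [0,1]$ is automatically an MV-homomorphism. The paper proves this directly: from $f(a) \odot f(a^*) = f(a \odot a^*) = 0$ and $f(a) \oplus f(a^*) = f(a \oplus a^*) = 1$ one gets $f(a^*) = f(a)^*$, hence $H_A = \hom_{\MV}(A,[0,1])$. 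Without this step your identification $B \cong A$, and thus your description of $F_A$ as $f \mapsto f^{-1}[0]$ (rather than $\bar f^{\,-1}[0]$ as in Proposition~\ref{homeoprop}), is not justified.

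Second, and more structurally: $\Upsilon A$ is by definition the \emph{partially ordered} MV-space $(H_A, \tau_A, \leq_A)$, whereas $\Max A$ lives in $\TMV$ with no order, so for a natural isomorphism between these functors even to make sense one must dispose of the order. The paper does this by observing that for an MV-algebra $A$ the pointwise order on $H_A$ is trivial: if $f \leq g$ then also $f(a^*) \leq g(a^*)$, whence $g(a) \leq f(a)$ for all $a$, so $f = g$. Thus $\Upsilon A = (H_A,\tau_A,\Delta)$ and one may legitimately forget the order. You simply write $\Upsilon A = (H_A,\tau_A)$ without comment; this step should be made explicit.
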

    \begin{proof}  
    If $A$ is an MV-algebra and $f \in H_{A}$, then $0=f(0_{A})=f(a \odot a^{*})=f(a) \odot f(a^{*})$ and  $1=f(1_{A})=f(a \oplus a^{*})=f(a)\oplus f(a^{*})$ for $a \in A$. So $f(a^{*})=f(a)^{*}$, hence $f \in \hom_{\MV}(A, [0,1])$ 
 and $H_{A}=\hom_{\MV}(A,[0,1])$.  Moreover, by Proposition \ref{A}, the map 
 \begin{align*}
     T:A &\lto \tilde{A}\\
     a & \mapsto \tilde{a}
 \end{align*}
  is bijective and, since the operations  are defined pointwise, it is clear that $T$ is an isomorphism of MV-algebras. On the other hand
  $$\tilde{a}(f)=f(a)=\iota_{M_{f}}(a/M_{f})=\hat{a}(M_{f})$$
  where $M_{f}=f^{-1}[0]$ and the order in $H_{A}$ is trivial. Indeed, if $f$ and $g$ in $H_{A}$ are such that $f \leq_{A} g$, $f(a) \leq g(a)$ for all $a \in A$,  then $f(a^{*})\leq g(a^{*})$ which implies that $g(a) \leq f(a)$ and consequently $f=g$. Finally, by Proposition \ref{homeoprop}, we have that $\Upsilon A=(H_{A}, \tau_{A}, \Delta) \cong (\Max A, \Omega_{A}, \Delta)$, therefore, the natural isomorphism between $\Upsilon$ and $\Max$ is obtained by simply forgetting the trivial order relation.
  \end{proof}

  On the other hand, if we consider a Stone MV-space $(X, \tau)$ as a trivially ordered MV-space, then $\clopI\tau=\clop \tau$. So, the functor $\clopI$ coincides with $\clop$ on Stone MV-spaces and thus the restriction of the adjunction $\clopI \dashv \Upsilon$ to $\lcc$ MV-algebras and Stone MV-spaces coincide with Stone duality for MV-algebras.
  
Summing up, we have the following diagram of categorical full embeddings and dualities, with obvious meaning of the arrows, $\mathcal{BDL}$ being the category of bounded distributive lattices with their homomorphisms.
	
$$	\begin{tikzcd}
\mathcal{P}\!\!\operatorname{ries} \arrow[rd, hook] \arrow[rr, leftrightarrow]      &                                   & \mathcal{BDL} \arrow[rd, hook']                                             &                         \\
                                                                    & \Pries^{\lcc} \arrow[rr, leftrightarrow]          & {} \arrow[u]                                                                & \MVlcc_{+H}             \\
\mathcal{S}\!\operatorname{tone} \arrow[rd, hook] \arrow[uu, hook'] & {} \arrow[l] \arrow[r]            & \mathcal{B}\!\operatorname{oole} \arrow[rd, hook'] \arrow[u, no head, hook] &                         \\
                                                                    & \SMV \arrow[uu, hook'] \arrow[rr, leftrightarrow] &                                                                             & \MVlcc \arrow[uu, hook]
\end{tikzcd}
$$	
  \textbf{Open Question.} The problem of algebraically characterizing $\lcc$ positive MV algebras, as well as describing the Priestley spaces that support this duality, remains open.
\bibliographystyle{acm}

\bibliography{refpriest}
\end{document}